\def\chaptermark#1{}
\def\chapter{%
  \if@openright\cleardoublepage\else\clearpage\fi
  \thispagestyle{plain}\global\@topnum\z@
  \@afterindenttrue \secdef\@chapter\@schapter}
\def\@chapter[#1]#2{\refstepcounter{chapter}%
  \ifnum\c@secnumdepth<\z@ \let\@secnumber\@empty
  \else \let\@secnumber\thechapter \fi
  \typeout{\chaptername\space\@secnumber}%
  \def\@toclevel{0}%
  \ifx\chaptername\appendixname \@tocwriteb\tocappendix{chapter}{#2}%
  \else \@tocwriteb\tocchapter{chapter}{#2}\fi
  \chaptermark{#1}%
  \addtocontents{lof}{\protect\addvspace{10\p@}}%
  \addtocontents{lot}{\protect\addvspace{10\p@}}%
  \@makechapterhead{#2}\@afterheading}
\def\@schapter#1{\typeout{#1}%
  \let\@secnumber\@empty
  \def\@toclevel{0}%
  \ifx\chaptername\appendixname \@tocwriteb\tocappendix{chapter}{#1}%
  \else \@tocwriteb\tocchapter{chapter}{#1}\fi
  \chaptermark{#1}%
  \addtocontents{lof}{\protect\addvspace{10\p@}}%
  \addtocontents{lot}{\protect\addvspace{10\p@}}%
  \@makeschapterhead{#1}\@afterheading}
\newcommand\chaptername{Chapter}
\def\@makechapterhead#1{\global\topskip 7.5pc\relax
  \begingroup
  \fontsize{\@xivpt}{18}\bfseries\centering
    \ifnum\c@secnumdepth>\m@ne
      \leavevmode \hskip-\leftskip
      \rlap{\vbox to\z@{\vss
          \centerline{\normalsize\mdseries
              \uppercase\@xp{\chaptername}\enspace\thechapter}
          \vskip 3pc}}\hskip\leftskip\fi
     #1\par \endgroup
  \skip@34\p@ \advance\skip@-\normalbaselineskip
  \vskip\skip@ }
\def\@makeschapterhead#1{\global\topskip 7.5pc\relax
  \begingroup
  \fontsize{\@xivpt}{18}\bfseries\centering
  #1\par \endgroup
  \skip@34\p@ \advance\skip@-\normalbaselineskip
  \vskip\skip@ }
\def\appendix{\par
  \c@chapter\z@ \c@section\z@
  \let\chaptername\appendixname
  \def\thechapter{\@Alph\c@chapter}}
\newcounter{chapter}
\newif\if@openright
\renewcommand\thechapter{\Alph{chapter}}
\numberwithin{equation}{section}
\newcommand{\N}{{\mathbb N}}
\newcommand{\Z}{{\mathbb Z}}
\newcommand{\Q}{{\mathbb Q}}
\newcommand{\R}{{\mathbb R}}
\newcommand{\C}{{\mathbb C}}
\newcommand{\U}{{\mathcal U}}
\newcommand{\GL}{\operatorname{GL}}
\newcommand{\PSL}{\operatorname{PSL}}
\newcommand{\Ma}{\operatorname{M}}
\newcommand{\I}{\mathrm{I}}
\newcommand{\V}{\mathrm{V}}
\newcommand{\ZZ}{\mathcal{Z}}
\newcommand{\Cliff}{{\mathcal{C}}}  
\newcommand{\B}{\operatorname{B}\xspace}
\definecolor{wildstrawberry}{rgb}{1.0, 0.26, 0.64}
\renewcommand{\O}{\mathcal{O}}
\newcommand{\OO}{\mathcal{O}}
\newcommand{\LL}{\mathcal{L}}
\newcommand{\E}{\operatorname{E}}
\newcommand{\GE}{\operatorname{GE}}
\newcommand{\D}{\operatorname{D}}
\newcommand{\DE}{\operatorname{DE}}
\newcommand{\sm}[1]{\left(\begin{smallmatrix} #1 \end{smallmatrix}\right)}
\newcommand{\SL}{\operatorname{SL}}
\newcommand{\qa}[3]{\left(\frac{#1, #2}{#3}\right)}
\newcommand{\FA}[1][]{\ifx #1\textup{FA}\xspace \else
  \textup{FA$_{#1}$}\xspace
  \fi
}
\newcommand{\HFA}[1][]{\ifx #1\textup{HFA}\xspace \else
  \textup{HFA$_{#1}$}\xspace
  \fi
}
\newcommand{\T}{\textup{(T)}\xspace}
\newtheorem{lemma}{Lemma}[section]
\newtheorem{proposition}[lemma]{Proposition}
\newtheorem{theorem}[lemma]{Theorem}
\newtheorem{corollary}[lemma]{Corollary}
\newtheorem{maintheorem}{Theorem}
\theoremstyle{definition}
\newtheorem{definition}[lemma]{Definition}
\newtheorem{question}[lemma]{Question}
\newtheorem{remark}[lemma]{Remark}
\newtheorem*{remark*}{Remark}
\newtheorem*{proposition*}{Proposition}
\newtheorem*{problem*}{Problem}
\title[A dichotomy for integral group rings]{A dichotomy for integral group rings via higher modular groups as amalgamated products}
\author[A.~B\"achle]{Andreas B\"achle}
\author[G.~Janssens]{Geoffrey Janssens}
\author[E.~Jespers]{Eric Jespers}
\author[A.~Kiefer]{Ann Kiefer}
\author[D.~Temmerman]{Doryan Temmerman}
\address{Vakgroep Wiskunde, Vrije Universiteit Brussel, Pleinlaan 2, 1050 Brussels, Belgium}
\email{\href{mailto:Andreas.Bachle@vub.be}{Andreas.Bachle@vub.be}, \href{mailto:Geoffrey.Janssens@vub.be}{Geoffrey.Janssens@vub.be}, \href{mailto:Eric.Jespers@vub.be}{Eric.Jespers@vub.be}, \href{mailto:Ann.Kiefer@vub.be}{Ann.Kiefer@vub.be}, \href{mailto:Doryan.Temmerman@vub.be}{Doryan.Temmerman@vub.be}}
\thanks{The first and second author are grateful to Fonds Wetenschappelijk Onderzoek - Vlaanderen for financial support. The third, fourth and fifth author are grateful to Onderzoeksraad VUB and Fonds Wetenschappelijk Onderzoek - Vlaanderen for financial support.}
\subjclass[2010] {20E06, 20F05, 16U60, 20C05, 20H25} 
\keywords{free products with amalgamation, special linear groups, Clifford algebra, Kazhdan's property $\T$, integral group rings, units}
\begin{document}

\begin{abstract}
We show that $\mathcal{U}(\mathbb{Z}G)$, the unit group of the integral group ring $\mathbb{Z} G$, either satisfies Kazhdan's property (T) or is, up to commensurability, a non-trivial amalgamated product, in case $G$ is a finite group satisfying some mild conditions. 
Crucial in the proof is the construction of amalgamated decompositions of the elementary group $\operatorname{E}_2(\mathcal{O})$, where $\mathcal{O}$ is an order in a rational division algebra. A major step is to introduce subgroups $\operatorname{E}_2(\Gamma_n(\mathbb{Z}))$ inside the so-called higher modular groups $\operatorname{SL}_+(\Gamma_n(\mathbb{Z}))$, which are discrete subgroups of certain $2 \times 2$ matrix groups with entries in a Clifford algebra. The groups $\operatorname{E}_2(\Gamma_n(\mathbb{Z}))$ mimic the elementary groups in linear groups over rings. We prove that $\operatorname{E}_2(\Gamma_n(\mathbb{Z}))$ has in general a non-trivial decomposition as a free product with amalgamated subgroup $\operatorname{E}_2(\Gamma_{n-1}(\mathbb{Z}))$. From this we obtain that also the higher modular groups do have a very clearly structured amalgam decompositions in low dimensions.
\end{abstract}

\maketitle

\section{Introduction}

The paper has two principal aims. The first is representation theoretical and concerns the program initiated in \cite{abelianizationpaper} to disentangle $\mathcal{U}(\Z G)$ using geometric group theoretical notions, and this without losing track of the torsion elements.
The ultimate goal hereof would be to develop reduction methods for certain guiding questions in integral representation theory such as the integral isomorphism problem and the Zassenhaus conjectures.
The main contribution of this paper to this goal is \Cref{dichotomy HFA-Amalgamated} which says that, under a mild condition on the finite group $G$, the unit group $\mathcal{U}(\Z G)$ either has Kazhdan's property $\T$ or is, up to commensurability, a non-trivial amalgamated product $ A \ast_C B$ (and hence in this case all torsion subgroups are conjugated to subgroups in $A$ or $B$).
In order to obtain (useful) reduction methods one needs in the second case to produce concrete amalgamations, whereas in the first case an abundance of literature equips us with new machinery.
Interestingly, combining the recently obtained characterisation of property \T of the authors \cite[Theorem 7.1]{abelianizationpaper} with Jespers-Leal's theorem \cite[Theorem 3.3]{JesLea2} one obtains that property \T occurs (almost) exactly when the generic constructions of bicyclic and Bass units \cite[Chapter 1]{EricAngel1} form a subgroup of finite index in $\U (\Z G)$.
Consequently, one may confidently expect that the dichotomy mentioned above will form a pillar for future research in the above program. 

The proof of the dichotomy ultimately relies on understanding the structure of the linear groups $\SL_2(\mathcal{O})$ and their subgroups generated by elementary matrices $\E_2(\mathcal{O})$ with $\mathcal{O}$ an order in a rational division algebra $D$.
In particular, it will rely on the modular group $\PSL_2(\mathbb{Z})$ and the Bianchi groups $\PSL_2(\mathcal{I}_d)$, where $\mathcal{I}_d$ is the ring of integers of $\Q(\sqrt{-d})$.
Bianchi groups have already a long history and presentations and amalgamated decompositions (when they exist) are well known, for example see the books \cite{FineBook, Serre}.
However, to obtain the dichotomy we cannot escape from also considering special linear groups over smaller orders in quadratic imaginary extensions of $\Q$. Moreover, we are forced to handle the by far less well understood cases when $\O$ is an order in a quaternion algebra.

One of the first papers handling units in $2 \times 2$ matrix groups over orders in quaternion algebras is \cite{clifford}. It is shown that it all comes down to studying discrete subgroups of Vahlen groups, which are $2 \times 2$ matrices with entries in a Clifford algebra. These groups were described in \cite{Vahlen}, where Vahlen generalized M\"obius transformations to higher dimensional hyperbolic spaces. Discrete subgroups of the latter provide a natural generalization of the modular group to higher dimensions. These generalized groups are denoted by $\SL_+(\Gamma_n(\Z))$ and referred to as higher dimensional modular groups. These groups were rediscovered later by Ahlfors in \cite{1985ahlfors}.  

In this paper, we consider subgroups of $\SL_+(\Gamma_n(\Z))$ generated by elementary matrices, which we denote by $\E_2(\Gamma_n(\Z))$. 
We will see that, although $\Gamma_n(\Z)$ is not a ring, the newly defined group $\E_2(\Gamma_n(\Z))$ mimics remarkably well the classical group $\E_2(R)$, for a ring $R$, and its use for the study of $\GL_2(R)$.
Next, in the spirit of \cite{Cohn1, FineBook}, a constructive method is given to obtain useful finite presentations for the elementary subgroup.
Using the latter presentations, we then obtain concrete non-trivial amalgamated decompositions of $\E_2(\Gamma_n(\Z)$ for $n \geq 1$. 

The modular and Bianchi groups have numerous applications throughout number theory, geometry and analysis.
One can be optimistic that the higher-dimensional analogue of these groups will turn out to have similar uses (for instance see \cite{McInroy}) and hence these groups constitute a topic of independent interest. This is why, although the important case in the group ring context is the case when $n=4$, we consider the groups $\E_2(\Gamma_n(\Z))$ in their whole generality for all $n \geq 1$.

We will now highlight in two parts the main results obtained in this paper. 

\subsection*{Presentations and amalgamations of higher modular groups}
Vahlen \cite{Vahlen} introduced a special linear group of $2\times 2$ matrices with entries in the Clifford group $\Gamma_n(\R)$ of the real Clifford algebra $\mathcal{C}_n(\mathbb{R})$ of degree $n$. In \cite{Maass} Maass also described this group. Recall that $\mathcal{C}_n(\mathbb{R})$ is the (associative and unital) $\mathbb{R}$-algebra generated by $n-1$ symbols $i_1, \ldots, i_{n-1}$ and solely subject to the relations $i_k^2 = -1$ and $i_t i_k = - i_k i_t$ for $t\neq k$. The Clifford group $\Gamma_n(\R)$ is the group of invertible elements in $\mathcal{C}_n(\R)$.
They prove that this special linear group acts via M\"obius transformations on the hyperbolic space $\mathbb{H}^{n+1}$ of dimension $n + 1$.

In 1985, in \cite{1985ahlfors}, Ahlfors draws attention to these special linear groups, mentioned in the paragraph before,  and denotes these groups by $\SL_+(\Gamma_n(\R))$. In this paper we consider their discrete subgroups $\SL_+(\Gamma_n(\Z))$, which act discontinuously on $\mathbb{H}^{n+1}$. This group generalizes the classical case of Fuchsian and Kleinian groups, that act discontinuously on hyperbolic space of dimension $2$ and $3$ respectively, to higher dimensions.
Recently generalizations of Kleinian groups to higher dimensions \cite{Kapovich} or Vahlen groups in general \cite{McInroy} have received more attention.

In \cite{Cohn1} Cohn initiated, for general rings $R$, the systematic study of the group $\E_2(R)$ generated by the elementary matrices over $R$. Nowadays, their predominant role in the study of $\GL_2(R)$ goes without saying.
For example, building on Cohn's work, Fine \cite{FineBook} was able, among other things, to obtain amalgam decompositions for the Euclidian Bianchi groups.

With this in mind, we initiate in \Cref{introduction clifford} the study of the group generated by elementary matrices in $\SL_+(\Gamma_n(\Z))$, denoted $\E_2(\Gamma_n(\Z))$, and investigate this subgroup in the subsequent sections.
As a first main theorem we show that, to our surprise, the elementary groups are nested into each other in a very precise structural way.
As a consequence we obtain an amalgam decomposition for the ``Euclidean'' higher modular groups, i.e. in case $n \leq 4$.

\begin{maintheorem}[Theorem~\ref{prop:E_2_O_L_is_amalgam} and Corollary~\ref{SL_2L_2isamalgam}]\label{mainTheointro}
The group $\E_2(\Gamma_n(\Z))$ has a non-trivial decomposition as an amalgamated product with amalgamated subgroup $\E_2(\Gamma_{n-1}(\Z))$. In particular, for $n \leq 4$, $\SL_+(\Gamma_n(\Z))$ is an amalgamated product over $\SL_+(\Gamma_{n-1}(\Z))$.
\end{maintheorem}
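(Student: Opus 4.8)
The plan is to establish the amalgamated decomposition of $\E_2(\Gamma_n(\Z))$ by exploiting the nested structure of the elementary groups, and then to upgrade this to a statement about the full modular groups $\SL_+(\Gamma_n(\Z))$ in the Euclidean range $n \le 4$. First I would set up the relevant inclusion $\E_2(\Gamma_{n-1}(\Z)) \hookrightarrow \E_2(\Gamma_n(\Z))$, coming from the embedding $\Cliff_{n-1}(\R) \hookrightarrow \Cliff_n(\R)$ that adjoins the generator $i_{n-1}$, and identify inside $\E_2(\Gamma_n(\Z))$ two natural subgroups $A$ and $B$ containing this common subgroup: one should be (conjugate to) a subgroup generated by $\E_2(\Gamma_{n-1}(\Z))$ together with the ``new'' elementary unipotent involving $i_{n-1}$, and the other a complementary piece, so that the pair $(A,B)$ generates $\E_2(\Gamma_n(\Z))$. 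The key input here is the constructive finite presentation of $\E_2(\Gamma_n(\Z))$ obtained earlier in the paper (in the spirit of Cohn's presentation of $\E_2(R)$): with generators the elementary matrices and relations of Steinberg type together with the ``division-algebra'' relations, one reads off that every relator already lies in $A$ or in $B$.

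Concretely, the heart of the argument is a normal-form / Bass–Serre style verification. I would show that the natural map $A \ast_{\E_2(\Gamma_{n-1}(\Z))} B \to \E_2(\Gamma_n(\Z))$ is injective by checking that no reduced word in the amalgam collapses — either directly, by a ping-pong argument on the action on $\Hy^{n+1}$ (using that $\SL_+(\Gamma_n(\Z))$ acts discontinuously, so one can separate the ``new'' direction $i_{n-1}$ geometrically from the lower-dimensional copy), or algebraically, by comparing the Cohn-type presentations of source and target and verifying that the obvious presentation of the amalgam coincides with the one already proved for $\E_2(\Gamma_n(\Z))$. Surjectivity is immediate from the generating set. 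Non-triviality of the decomposition — i.e.\ that neither factor equals the whole group — would follow because the ``new'' elementary matrix involving $i_{n-1}$ cannot be expressed using only lower generators (again a consequence of the presentation, or of a degree/valuation argument on Clifford-algebra entries).

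For the ``in particular'' clause, the point is that when $n \le 4$ the Clifford order $\Gamma_n(\Z)$ is small enough that $\SL_+(\Gamma_n(\Z))$ is generated by elementary matrices, i.e.\ $\E_2(\Gamma_n(\Z)) = \SL_+(\Gamma_n(\Z))$ — this is the analogue of $\SL_2 = \E_2$ over Euclidean rings, and should have been established (for the Euclidean higher modular groups) in the sections preceding this theorem. Granting that equality for both $n$ and $n-1$, the amalgamated decomposition of $\E_2(\Gamma_n(\Z))$ transports verbatim to $\SL_+(\Gamma_n(\Z))$ over $\SL_+(\Gamma_{n-1}(\Z))$.

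The main obstacle I anticipate is the injectivity step: since $\Gamma_n(\Z)$ is \emph{not} a ring (only a multiplicative monoid with a partial additive structure), the classical Cohn machinery for $\E_2(R)$ does not apply off the shelf, and one must carefully track which products and sums of Clifford elements remain in $\Gamma_n(\Z)$ when reducing words. Making the presentation of $\E_2(\Gamma_n(\Z))$ genuinely compatible with the free-product-with-amalgamation presentation — i.e.\ showing the list of defining relators of the target is exactly the union of those of $A$ and of $B$ modulo the shared subgroup, with no hidden relation mixing the two factors — is where the real work lies, and a geometric ping-pong on $\Hy^{n+1}$ may be the cleanest way to sidestep delicate combinatorics in the Clifford monoid.
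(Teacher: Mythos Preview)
Your overall strategy is the paper's: obtain a finite presentation of $\E_2(\Gamma_n(\Z))$, then observe that the defining relators split between two overgroups of $\E_2(\Gamma_{n-1}(\Z))$ and invoke the folklore presentation-splitting criterion (Proposition~\ref{disjoint generators amalgam}). Your treatment of the ``in particular'' clause via the equality $\E_2 = \SL_+$ for $n \le 4$ is also exactly what the paper does.

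There is, however, a real gap in your proposed choice of factors. You take $A$ to be generated by $\E_2(\Gamma_{n-1}(\Z))$ together with the new elementary unipotent $T_{i_{n-1}} = \sm{1 & i_{n-1} \\ 0 & 1}$. But $E(0) \in \E_2(\Gamma_{n-1}(\Z))$ and $T_{i_{n-1}} = E(0)E(i_{n-1})^{-1}$, so $E(i_{n-1}) \in A$; then relation \eqref{RC2} gives $D(i_{n-1}) \in A$ as well, whence $A = \E_2(\Gamma_n(\Z))$ and the amalgam is trivial. The point is that in the Cohn-type presentation the new Clifford direction $i_{n-1}$ contributes \emph{two} kinds of generator --- a unipotent one and a diagonal one --- and \eqref{RC2} ties them together, so there is no way to put ``the new unipotent'' on one side by itself.

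The paper's resolution is a specific, non-obvious change of generating set carried out over Lemmas~\ref{secondpresent}--\ref{presentationOL}, ending with generators $j,a,c$ and $b_{i_h} = E(0)D(i_h)$, $d_{i_h} = T_{i_h}D(i_h)E(0)$. In that final presentation no relator involves both $b_{i_{n-1}}$ and $d_{i_{n-1}}$ simultaneously, so placing $b_{i_{n-1}}$ in one factor and $d_{i_{n-1}}$ in the other (with all lower-indexed $b$'s and $d$'s in the amalgamated core) makes every relator live entirely in one factor. Once this is achieved, injectivity of the map from the abstract amalgam is automatic from Proposition~\ref{disjoint generators amalgam} --- no normal-form analysis and no ping-pong on $\Hy^{n+1}$ is needed, and indeed no geometry enters the paper's argument at all.
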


To prove the result, we first carefully explain in \Cref{finite presentation clifford} how to obtain a finite presentation of $\E_2(\Gamma_n(\Z))$ for all $n \geq 1$.
Thereafter, inspired by methods in \cite{FineBook}, we obtain in \Cref{mainsection} a surprisingly smooth concrete amalgamated decomposition, yielding the main theorem above.

\begin{remark*}
For $n=1$ and $n=2$ the higher modular group $\SL_+(\Gamma_n(\Z))$ is isomorphic to $\SL_2(\Z)$ and $\SL_2(\Z[i])$, respectively. For $n=3$, we get an isomorphism with the group described in \cite{MacWatWie} and for $n=4$, this group is isomorphic to a subgroup of finite index in the group of $2 \times 2$ matrices of reduced norm $1$ over the Lipschitz quaternions $\LL$. Hence, \Cref{mainTheointro} generalizes the classical results for $\SL_2(\Z)$ and $\SL_2(\Z[i])$, but it also yields a previously unknown amalgamation of $\SL_+(\Gamma_3(\Z))$ over $\SL_2(\Z[i])$ and $\SL_+(\Gamma_4(\Z))$ over  $\SL_+(\Gamma_3(\Z))$.
\end{remark*}
 
\subsection*{A dichotomy for integral group rings}

Let $G$ be a finite group. One of the most natural questions in representation theory is the so-called integral isomorphism problem which asks whether $G$ is uniquely determined by the integral group ring $\Z G$. This question was already posed explicitly for the first time by Higman in 1940 \cite{Higman} and thereafter popularized by Brauer \cite{BrauerSurvey} in the 1960's.

Interestingly, this problem has an equivalent purely group theoretical reformulation, namely whether the unit group $\U (\Z G)$ determines $G$ uniquely. This is one of the reasons why the structure of $\U (\Z G)$ received tremendous attention in the last five decades and we refer to the books \cite{EricAngel1, EricAngel2, SehgalBook93} for the main advances on this topic. Over time two main research directions emerged. On the one hand, the search for generic constructions of subgroups of finite index in $\U (\Z G)$ and on the other hand the understanding of torsion units in $\Z G$ with a special emphasis on the Zassenhaus conjectures. The third and strongest of these conjectures, denoted (ZC3), asserts that every finite subgroup of $\U (\Z G)$ is conjugated, within $\U (\Q G)$, to a subgroup of $G$. Since the groundbreaking work of Roggenkamp-Scott \cite{RoggScott} and Weiss \cite{WeissPGroup, WeissNilpotent}, proving (ZC3) for nilpotent groups, progress on (ZC3) and the isomorphism problem has been scarce. The main issues for further progress on the latter questions are the following

\begin{enumerate}
\item The lack of (generic) constructions of (torsion) units. 
\item The absence of reduction methods.
\end{enumerate}

As mentioned by Kleinert in \cite{KleinertSurvey} a ``Unit Theorem" for $\U(\Z G)$ (i.e. a basic structure theorem such as Dirichlet's unit theorem)  is still missing.
For abelian groups this was solved by Higman \cite{Higman}, however for only very few non-abelian groups $G$ a full presentation of $\U (\Z G)$ is known. A main reason for this is the first problem mentioned above. According to Kleinert \cite{KleinertSurvey}, ``a unit theorem for a finite-dimensional (semi-)simple rational algebra $A$ consists of the definition, in purely group theoretical terms, of a class of groups $\mathcal{C}(A)$ such that almost all generic unit groups of $A$ are members of $\mathcal{C}(A)$.''
Here a generic unit group stands for a torsion-free subgroup of finite index in $\SL_1 (\O)$, the elements of reduced norm $1$ of some order $\O$ in $A$.
The reason one considers $\SL_1(\O)$ instead of $\U (\O)$ is that $\langle \SL_1(\O), \mathcal{Z}(\U(\O)) \rangle$ is always a subgroup of finite index in $\U (\O)$. By Dirichlet's unit theorem, the structure of $\mathcal{Z}(\U (\O))$ is then considered as known.
In other words, the best one can hope for is to describe in a generic way representatives of the commensurability class of $\U (\Z G)$.
This point of view was explicitly formulated in \cite{JesRioCrelle} and called the virtual structure problem:

\begin{problem*}[The virtual structure problem for unit groups of integral group rings] For a class of groups $\mathcal{G}$, classify the finite groups $G$, such that $\U (\Z G)$ is virtually in $\mathcal{G}$.
\end{problem*}

In \cite{JPdRRZ}, the authors solved the problem in case $\mathcal{G}$ is the class consisting of direct products of free-by-free groups. Joint with the results in \cite{abelianizationpaper} we obtain (almost) the virtual structure problem for the class consisting of the non-trivial amalgamated products.

\begin{maintheorem}[Theorem~\ref{dichotomy HFA-Amalgamated}]\label{mainTheoBintro}
Let $G$ be finite group which is solvable or $5 \nmid |G|$. If $\U(\Z G)$ has finite center, then exactly one of the following properties holds:
\begin{enumerate}
\item $\mathcal{U}(\mathbb{Z}G)$ has property \T.
\item\label{it:com_amprod} $\mathcal{U}(\mathbb{Z}G)$ is commensurable with a non-trivial amalgamated product.
\end{enumerate}
\end{maintheorem}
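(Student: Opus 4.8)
The plan is to pass from $\mathcal{U}(\Z G)$, up to commensurability, to a direct product of linear groups attached to the Wedderburn components of $\Q G$, to decide the $\T$-side with the characterisation of \cite[Theorem~7.1]{abelianizationpaper}, and to extract a non-trivial amalgamated product on the other side from \Cref{mainTheointro} together with the classical decompositions of the modular and Bianchi groups. Concretely, write $\Q G\cong\prod_{i=1}^k M_{n_i}(D_i)$ for the Wedderburn--Artin decomposition and fix a maximal order $\O_i$ in $D_i$. Standard facts on orders (see \cite{EricAngel1}) show that $\mathcal{U}(\Z G)$ is commensurable with $\prod_{i=1}^k\GL_{n_i}(\O_i)$, and that each $\GL_{n_i}(\O_i)$ is commensurable with the product of $\SL_{n_i}(\O_i)$ with a finitely generated abelian group of rank $r_i$, where $r_i$ is the unit rank of the ring of integers of $K_i:=\mathcal{Z}(D_i)$. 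The hypothesis that $\mathcal{U}(\Z G)$ has finite center forces every $r_i=0$, i.e.\ every $K_i$ equals $\Q$ or is imaginary quadratic; hence $\mathcal{U}(\Z G)$ is commensurable with $\Gamma:=\prod_{i=1}^k\SL_{n_i}(\O_i)$. Since both property $\T$ and the property of being commensurable with a non-trivial amalgamated product are invariant under commensurability and under finite direct products, it is enough to decide the dichotomy for $\Gamma$, component by component.

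By \cite[Theorem~7.1]{abelianizationpaper} --- and this is the point at which the assumption that $G$ is solvable or $5\nmid|G|$ enters --- the group $\mathcal{U}(\Z G)$ has property $\T$ exactly when none of the components $M_{n_i}(D_i)$ is \emph{exceptional}, equivalently when $\SL_{n_i}(\O_i)$ has property $\T$ for every $i$; for the non-exceptional components this holds because $\SL_{n_i}(\O_i)$ is then either trivial or a higher-rank arithmetic group, and the exceptional components form an explicit finite list. If no component is exceptional we are in case~(1). Moreover the two alternatives exclude each other: a group with property $\T$ has Serre's property $\textup{FA}$, so it is not a non-trivial amalgamated product; and since property $\T$ is a commensurability invariant, a group with property $\T$ cannot be commensurable with a non-trivial amalgamated product either.

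Suppose now that some component $M_{n_j}(D_j)$ is exceptional. It suffices to show that $\SL_{n_j}(\O_j)$ is commensurable with a non-trivial amalgamated product $A\ast_C B$: writing $\Gamma\cong\SL_{n_j}(\O_j)\times K$ with $K:=\prod_{i\ne j}\SL_{n_i}(\O_i)$, one gets that $\Gamma$, hence $\mathcal{U}(\Z G)$, is commensurable with $(A\times K)\ast_{C\times K}(B\times K)$, again a non-trivial amalgamated product since $C\times K$ is a proper subgroup of both $A\times K$ and $B\times K$; this puts us in case~(2). It therefore remains to run through the finite list of exceptional components. When $D_j=\Q$ and $n_j=2$ one has $\SL_2(\Z)\cong\Z/4\ast_{\Z/2}\Z/6$. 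When $n_j=2$ and $D_j$ is an imaginary quadratic field, $\SL_2(\O_j)$ is, up to commensurability, a Bianchi group, whose non-trivial amalgam decomposition is classical \cite{FineBook,Serre}. When $n_j=2$ and $D_j$ is a totally definite rational quaternion algebra --- the Lipschitz case being the ``$n=4$'' group $\SL_+(\Gamma_4(\Z))$ of \Cref{mainTheointro} --- one applies the construction of \Cref{finite presentation clifford,mainsection}, which exhibits $\E_2(\O_j)$ as a non-trivial amalgamated product; for the orders $\O_j$ occurring here $\E_2(\O_j)$ has finite index in $\SL_2(\O_j)$, which yields the decomposition up to commensurability. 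Finally, when $n_j=1$ and $D_j$ is a non-commutative division algebra with $\SL_1(\O_j)$ infinite, one uses the reduction of \cite{clifford} to Vahlen groups together with \Cref{mainTheointro}; for instance, if $D_j$ is an indefinite rational quaternion algebra then $\SL_1(\O_j)$ is a cocompact Fuchsian group, hence virtually the fundamental group of a closed orientable hyperbolic surface of genus at least $2$, which splits as a non-trivial amalgamated product of two free groups over $\Z$.

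The technical heart is the quaternionic part of this last step. There is no classical analogue of the modular and Bianchi amalgam decompositions for $2\times 2$ matrices over orders in quaternion algebras, and supplying one --- via the groups $\E_2(\Gamma_n(\Z))$, their finite presentations, and the rigid nesting of \Cref{mainTheointro} --- is precisely the purpose of the Clifford-algebraic bulk of the paper; in particular the group-ring-critical case $n=4$ of the Lipschitz quaternions would otherwise be out of reach. A secondary, more bookkeeping-type point is the verification that, under ``$G$ solvable or $5\nmid|G|$'' together with the finite-center hypothesis, the exceptional components really are exhausted by the cases listed above, so that the case analysis is complete; this rests on \cite[Theorem~7.1]{abelianizationpaper} and the classification of exceptional components behind it.
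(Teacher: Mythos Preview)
Your overall architecture is right, but two of the case analyses are genuinely broken, and a third is mislocated.

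\textbf{The imaginary quadratic case.} It is not true that every Bianchi group is ``classically'' a non-trivial amalgam: $\PSL_2(\mathcal{I}_3)$ has Serre's property \FA (see \cite[Section 6.5, Exercise 5]{Serre} and \cite[Theorem 5.1]{abelianizationpaper}), so $\SL_2(\mathcal{I}_3)$ admits \emph{no} non-trivial amalgamated decomposition at all. Your appeal to \cite{FineBook,Serre} covers $d=1,2,7,11$ and Frohman--Fine covers the non-Euclidean $d$, but $d=3$ is precisely the obstruction. The paper's way around this is \Cref{Zsqrt3amalgam}: one drops to the index-$2$ suborder $\Z[\sqrt{-3}]\subsetneq\mathcal{I}_3$ and proves, via a somewhat surprising epimorphism $\E_2(\Z[\sqrt{-3}])\twoheadrightarrow\E_2(\mathcal{I}_{11})$, that $\SL_2(\Z[\sqrt{-3}])$ \emph{is} a non-trivial amalgam. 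This is new, not classical, and your proposal does not supply it.

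\textbf{The quaternion case.} \Cref{mainTheointro} only treats $\Gamma_4(\Z)$, hence only the Lipschitz order in $\qa{-1}{-1}{\Q}$. It says nothing about orders in $\qa{-1}{-3}{\Q}$ or $\qa{-2}{-5}{\Q}$, and there is no ``construction of \Cref{finite presentation clifford,mainsection}'' for those algebras. In fact, whether $\SL_2(\O)$ for $\O$ the maximal order in $\qa{-2}{-5}{\Q}$ is virtually an amalgam is left open in the paper (\Cref{que:O5}). The paper instead uses the classification in \Cref{possible exceptional components}: item~(3) says that whenever $\Ma_2\!\big(\qa{-1}{-3}{\Q}\big)$ or $\Ma_2(\Q(\sqrt{-2}))$ occurs, another component isomorphic to $\Ma_2(\Q)$ or $\Ma_2(\Q(i))$ also occurs, and one works with that component; items~(4) and~(5), together with the ramification argument for $5\nmid|G|$, rule out $\Ma_2\!\big(\qa{-2}{-5}{\Q}\big)$ entirely. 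This is exactly where the hypothesis ``$G$ solvable or $5\nmid|G|$'' is used --- not, as you wrote, in the \T characterisation \cite[Theorem~7.1]{abelianizationpaper}, which requires only that $G$ be cut. After these reductions, the only $D_{i_0}$ that remain are $\Q$, $\Q(i)$, $\Q(\sqrt{-3})$ and $\qa{-1}{-1}{\Q}$, and \Cref{SL_2L_2isamalgam}, \Cref{Zsqrt3amalgam} and \Cref{finite index amalgamation in SL2L} finish those four.

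\textbf{A minor point.} Your discussion of the $n_j=1$ type~(I) case is unnecessary: by \Cref{possible exceptional components}(6), cut groups have no exceptional components of type~(I), so that case never arises under the standing finite-center hypothesis.
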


Recall that by the Delorme-Guichardet Theorem \cite[Theorem 2.12.4]{BdlHV} a countable discrete group $\Gamma$ has property \T if and only if every affine isometric action of $\Gamma$ on a real Hilbert space has a fixed point. \Cref{mainTheoBintro} lays the first pillar towards the solution of the second problem, i.e. ``the absence of reduction methods''. Indeed,  in case $\U(\Z G)$ is a non-trivial free product with amalgamation  $A \ast_C B$, the study of torsion units, is reduced to (conjugates) of the proper subgroups $A$ and $B$ of $\U(\Z G)$. The next milestone would be a generic construction of the amalgamated product mentioned in the theorem. Recently in \cite{JanJesTem}, building on \cite{GonPas}, the first generic constructions of free products of torsion subgroups in $\U (\Z G)$ have been found.

In \cite[Theorem 7.1 and Corollary 7.5]{abelianizationpaper} the authors obtained the following characterization, in terms of $G$, of when property \T occurs for $\U(\Z G)$. It is proven that the group $\U(\Z G)$ has property \T if and only if $\U(\Z G)$ has finite center and the group algebra $\Q G$ does not have $\Ma_2(D)$ as an epimorphic image, where $D$ is
\begin{equation}\label{exc components}
\Q, \Q(\sqrt{-d}), \qa{a}{b}{\Q}
\end{equation}
 with $d\geq 0$ and $a, b <0$. Therefore, in order to prove \Cref{mainTheoBintro}, we need to understand $\SL_2(\O)$, with $\O$ an order in any of these algebras. This will be done through its elementary subgroup $\E_2(\O)$, and here the amalgamated products obtained in the first part of the paper play an important role. We, furthermore, obtain the following result which we believe is interesting on its own.

Put $\B'_2(R) = \E_2(R) \cap \B_2(R)$, for any ring $R$, where $\B_2(R)$ denotes the Borel subgroup of $\GL_2(R)$ (i.e. the group of invertible upper triangular $2\times 2$-matrices).

\begin{maintheorem}[\Cref{Zsqrt3amalgam}]
Let $\mathcal{O}$ be an order in $\mathbb{Q}(\sqrt{-d})$ with $d \geq 0$ a square-free integer. Then  $\E_2(\mathcal{O})$ has a decomposition as a non-trivial amalgamated product if and only if $\O \neq \mathcal{I}_3$. 
Moreover, if $\mathcal{O}$ is different from $\mathbb{Z}[\sqrt{-3}]$ and also from $\mathcal{I}_d$ with $d \in \lbrace 1,2,3,7,11 \rbrace$, then 
\begin{equation*}
\E_2(\mathcal{O}) \cong \E_2(\mathbb{Z}) \ast_{\B'_2(\mathbb{Z})} \B'_2(\O).
\end{equation*}
\end{maintheorem}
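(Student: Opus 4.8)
The plan is to isolate a ``generic'' family of orders, for which the clean decomposition is proved by a direct presentation argument, and to peel off the six exceptional orders separately. Throughout put $w = \sm{0 & 1 \\ -1 & 0} \in \E_2(\mathbb{Z})$ and, for $a$ in the ambient ring, $B(a) = \sm{1 & a \\ 0 & 1}$, $C(a) = \sm{1 & 0 \\ a & 1} = wB(-a)w^{-1}$. Two soft observations frame the amalgam. First, $\E_2(\mathcal{O})$ is generated by $\{B(a), C(a) : a \in \mathcal{O}\}$, and since $B(a) \in \B'_2(\mathcal{O})$, $C(a) = wB(-a)w^{-1}$ and $w \in \E_2(\mathbb{Z})$, the subgroups $\E_2(\mathbb{Z})$ and $\B'_2(\mathcal{O})$ generate $\E_2(\mathcal{O})$. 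Second, a matrix of $\E_2(\mathbb{Z})$ that is upper triangular over $\mathcal{O}$ has integer entries, whence $\E_2(\mathbb{Z}) \cap \B'_2(\mathcal{O}) = \B'_2(\mathbb{Z})$. These give a canonical surjection
\[
\Phi \colon \E_2(\mathbb{Z}) \ast_{\B'_2(\mathbb{Z})} \B'_2(\mathcal{O}) \longrightarrow \E_2(\mathcal{O}),
\]
so the ``moreover'' reduces to showing $\Phi$ is injective. Non-triviality is then automatic: $\B'_2(\mathbb{Z}) \subsetneq \E_2(\mathbb{Z})$ since $w$ is not triangular, and $\B'_2(\mathbb{Z}) \subsetneq \B'_2(\mathcal{O})$ since $B(\tau) \notin \B'_2(\mathbb{Z})$ for any $\tau \in \mathcal{O} \setminus \mathbb{Z}$ (and if $\mathcal{O} = \mathbb{Z}$ the statement is vacuous, so assume $\mathcal{O} \neq \mathbb{Z}$); one also checks $\B'_2(\mathcal{O}) \cong (\mathcal{O},+) \times \mathbb{Z}/2$ whenever $\mathcal{O}^\times = \{\pm 1\}$, which covers all orders relevant to the clean formula.

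To prove $\Phi$ injective for $\mathcal{O} \notin \{\mathcal{I}_1, \mathcal{I}_2, \mathcal{I}_3, \mathcal{I}_7, \mathcal{I}_{11}, \mathbb{Z}[\sqrt{-3}]\}$, I would use the finite presentation of $\E_2(\mathcal{O})$ constructed via the Cohn--Fine reduction (continued-fraction) procedure of \Cref{finite presentation clifford} (cf.\ \cite{Cohn1, FineBook}), combined with the observation that for such $\mathcal{O}$ the procedure has no ``short circuits''. The latter rests on a short norm estimate: outside the listed orders (and $\mathbb{Z}$), every nonzero non-unit of an order of an imaginary quadratic field has norm at least $4$ -- the exceptional minimal non-unit norms being $2$ for $\mathcal{I}_1, \mathcal{I}_2, \mathcal{I}_7$ and $\mathbb{Z}$, and $3$ for $\mathcal{I}_3, \mathcal{I}_{11}$ and $\mathbb{Z}[\sqrt{-3}]$ (in the last two coming from $\sqrt{-3}$). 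This bound is exactly what forces the reduction to terminate without ever identifying one elementary generator with another, so that the defining relations of $\E_2(\mathcal{O})$ are the ``generic'' ones -- additivity $B(a)B(a') = B(a+a')$ and the $\{\pm 1\}$-Weyl relations among $w$ and the $B(a)$ -- together with the single extra relation $(B(1)w)^3 = w^2$ carried by $\E_2(\mathbb{Z}) = \SL_2(\mathbb{Z})$. Tietze transformations then put this presentation in amalgam form $\langle X_1 \sqcup X_2 \mid R_1, R_2, \{x = \varphi(x)\}\rangle$, with $\langle X_1 \mid R_1\rangle$ a presentation of $\E_2(\mathbb{Z})$, $\langle X_2 \mid R_2\rangle$ one of $\B'_2(\mathcal{O})$, and $\varphi$ identifying the common $\B'_2(\mathbb{Z})$; the standard recognition of amalgamated products from such a presentation shows $\Phi$ is an isomorphism.

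The exceptional orders, which also yield the ``only if'' direction, are handled individually. For $\mathcal{O} = \mathcal{I}_1 = \mathbb{Z}[i]$ one has $\E_2(\mathcal{I}_1) = \SL_2(\mathbb{Z}[i]) = \SL_+(\Gamma_2(\mathbb{Z}))$, which by \Cref{mainTheointro} is a non-trivial amalgam over $\SL_+(\Gamma_1(\mathbb{Z})) = \SL_2(\mathbb{Z})$. For $\mathcal{O} \in \{\mathcal{I}_2, \mathcal{I}_7, \mathcal{I}_{11}\}$ one invokes the explicit non-trivial amalgam decompositions of these Euclidean Bianchi groups in \cite{FineBook}. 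For $\mathcal{O} = \mathbb{Z}[\sqrt{-3}]$ the reduction procedure picks up one extra short step (from $N(\sqrt{-3}) = 3$); absorbing it and rerunning the Tietze argument still delivers a non-trivial amalgam, now with the left factor (equivalently the amalgamated subgroup) suitably enlarged, so the first assertion holds although the clean formula need not. Finally, for $\mathcal{O} = \mathcal{I}_3 = \mathbb{Z}[\omega]$ one must prove the negative statement that $\E_2(\mathcal{I}_3) = \SL_2(\mathbb{Z}[\omega])$ has \emph{no} non-trivial decomposition as a free product with amalgamation, which can be extracted from the known presentation of this Bianchi group (cf.\ \cite{FineBook, Serre}).

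The main obstacle is the second step: verifying that for a generic $\mathcal{O}$ the presentation of $\E_2(\mathcal{O})$ carries \emph{no} relation beyond the generic $\GE_2$-relations and the one $\SL_2(\mathbb{Z})$-relation -- i.e. that the reduction procedure never short-circuits -- and then carrying out the Tietze rearrangement cleanly enough that the amalgam is visible. This is precisely where the norm bound (equivalently, the exclusion of the six orders) enters, and where, on the other side, the short circuits for $\mathcal{I}_1, \dots, \mathcal{I}_{11}, \mathbb{Z}[\sqrt{-3}]$ and the rigidity of $\SL_2(\mathbb{Z}[\omega])$ are exactly what make the ``if and only if'' sharp.
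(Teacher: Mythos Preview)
Your generic-case argument is essentially the paper's, repackaged. Your ``norm at least $4$'' condition is precisely the paper's notion of \emph{discretely normed} (no $x\in\mathcal{O}$ with $1 < |x| < 2$); under it, \Cref{clifford cohn} makes $\mathcal{O}$ universal for $\GE_2$, and the paper then simply invokes \cite[Lemma~2.1]{Menal} to obtain $\E_2(\mathcal{O}) \cong \E_2(S) \ast_{\B'_2(S)} \B'_2(\mathcal{O})$ with $S$ the subring generated by $\U(\mathcal{O})=\{\pm 1\}$, i.e.\ $S=\Z$. This one-line citation replaces your Tietze manipulation. (Minor slip: $(B(1)w)^3 = I$, not $w^2$.) The Euclidean Bianchi cases $\mathcal{I}_1,\mathcal{I}_2,\mathcal{I}_7,\mathcal{I}_{11}$ and the negative statement for $\mathcal{I}_3$ are disposed of by citation in both your proposal and the paper (the paper uses \cite{Hatcher} and \Cref{SL_2L_2isamalgam}; for $\mathcal{I}_3$ it relies on the well-known property \FA\ of $\SL_2(\mathcal{I}_3)$).

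The genuine gap is $\mathbb{Z}[\sqrt{-3}]$. ``Absorb the extra short step and rerun Tietze'' is not a proof: the extra relation $(E(-\sqrt{-3})E(\sqrt{-3}))^3 = -I$ from \Cref{clifford cohn} genuinely mixes $w$ with $B(\pm\sqrt{-3})$, so it lives in neither factor of your would-be amalgam, and there is no evident enlargement of one side that swallows it while keeping the decomposition non-trivial. The paper's remark following the theorem confirms that a direct presentation route does work, but it requires a non-obvious change of generators carried out in \cite{Doryanthesis}, not a one-line absorption. The paper's own proof bypasses this entirely with a neat trick you are missing: the additive map $a + b\sqrt{-3} \mapsto a + b\,\tfrac{1+\sqrt{-11}}{2}$ induces an epimorphism $\E_2(\Z[\sqrt{-3}]) \twoheadrightarrow \E_2(\mathcal{I}_{11})$ (one checks on the defining relations of \Cref{clifford cohn}; the norm-$3$ relation is preserved since $\tfrac{1+\sqrt{-11}}{2}$ also has norm $3$), and then \Cref{amalgamconserved} pulls back the known amalgam of $\E_2(\mathcal{I}_{11})$.
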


Note that the rings $\mathbb{Z}[\sqrt{-3}]$ and $\mathcal{I}_d$, with $d = 1,2,3,7,11$, are exactly those orders in imaginary quadratic extensions of $\Q$ which are $\GE_2$-rings, i.e. the general linear group is generated by elementary and diagonal matrices (see \cite[Theorem 3]{Dennis}). 
Interestingly, for $d \not \in \lbrace 1,2,3,7,11 \rbrace$, Frohman and Fine \cite{FroFin} proved that $$\SL_2(\mathcal{I}_d) \cong \E_2(\mathcal{I}_d) \ast_{F_d} G_d$$
for certain concrete groups $F_d$ and $G_d$. Hence, our description complements their result.
Note that for $\O=\mathcal{I}_3$, it is well-known that the Bianchi group $\PSL_2(\mathcal{I}_3)$ satisfies Serre's property \FA. In particular, $\SL_2(\mathcal{I}_3)$ has no non-trivial decomposition as an amalgamated product. The group $\SL_2(\mathbb{Z}[\sqrt{-3}])$ is a subgroup of index $2$ in $\SL_2(\mathcal{I}_3)$ and it comes a bit as a surprise that it does have a non-trivial amalgamated decomposition.

To end, in case $\U (\Z G)$ does not have property \T, we investigate which concrete subgroup is an amalgamated product. By \cite[Appendix A]{abelianizationpaper}, when $\Q G$ has an epimorphic image of the form $\Ma_2(D)$, with $D$ as listed in (\ref{exc components}), it will also have $\Ma_2(\Q)$ as an epimorphic image. This condition is in fact equivalent for $G$ to have $D_8$ or $S_3$ as an epimorphic image, see \Cref{remark over wnr M-2(Q) component}. One of the results obtained in this context is the following.

\begin{maintheorem}[\Cref{amalgam voor QG als M_2(Q) component}]
Let $G$ be a finite group having $D_8$ or $S_3$ as an epimorphic image. Then $\U(\Z G)$ is virtually a non-trivial amalgamated product. Furthermore, every subgroup $H$ of finite index in $\SL_1(\Z G)$ has a non-trivial amalgamated decomposition. 
\end{maintheorem}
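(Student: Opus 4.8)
The plan is to leverage the previous main theorems to reduce the assertion about $\U(\Z G)$ to a concrete statement about $\SL_2(\Z)$ and its subgroups, and then to promote an amalgam decomposition from a finite-index subgroup up to virtual statements about the full unit group. First I would invoke the standing hypothesis: $G$ has $D_8$ or $S_3$ as an epimorphic image. By \Cref{remark over wnr M-2(Q) component} this is equivalent to $\Q G$ having $\Ma_2(\Q)$ as a Wedderburn component (equivalently, as an epimorphic image). Via the Wedderburn decomposition $\Q G \cong \Ma_2(\Q) \times A'$ for some semisimple $\Q$-algebra $A'$, and choosing an order $\O \subseteq \Q G$ compatible with this decomposition, one gets that $\SL_1(\Z G)$ is commensurable with $\SL_2(\Z) \times \SL_1(\O')$, where $\O'$ is an order in $A'$. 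Since $\SL_2(\Z)$ is virtually free and non-elementary, the left factor already carries a non-trivial amalgamated decomposition (indeed $\SL_2(\Z) \cong \Z/4 \ast_{\Z/2} \Z/6$), and the classical fact that a direct product $\Gamma \times \Delta$ with $\Gamma$ a non-trivial amalgam $A \ast_C B$ and $\Delta$ arbitrary finitely generated is again a non-trivial amalgam $(A \times \Delta) \ast_{C \times \Delta} (B \times \Delta)$ gives that $\SL_1(\Z G)$ is virtually a non-trivial amalgamated product. This handles the first sentence.

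For the second, stronger sentence I would need that \emph{every} finite-index subgroup $H \le \SL_1(\Z G)$ — not merely some convenient one — has a non-trivial amalgamated decomposition. The cleanest route is via Serre's property \FA and its negation. A group acts without a global fixed point on a tree (equivalently, splits non-trivially as an amalgam or HNN-extension over the fundamental-group/Bass--Serre dictionary, and in the finitely generated case an amalgam can always be extracted from a minimal action without edge inversions and without global fixed point) precisely when it does not have property \FA. So it suffices to show that no finite-index subgroup of $\SL_1(\Z G)$ has property \FA. Here I would use that, under the commensurability above, $H$ has a finite-index subgroup of the form $H_1 \times H_2$ with $H_1$ of finite index in $\SL_2(\Z)$; then $H_1$ is a non-abelian free group (by the Nielsen--Schreier / Kurosh argument applied to the virtually free $\SL_2(\Z)$, every torsion-free finite-index subgroup is free of rank $\ge 2$), hence $H_1$ acts on a tree with unbounded orbits, hence $H_1 \times H_2$ — and therefore any group containing it with finite index — fails \FA, using the standard fact that \FA passes to finite-index \emph{over}groups in the finitely generated setting (if a finite-index subgroup fails \FA one builds an action of the overgroup on a tree by inducing, or more elementarily one uses that \FA for $H$ forces \FA for finite-index subgroups only after bounded-generation care — so it is cleanest to argue directly that an action of $H_1$ on a tree with no fixed point extends to the commensurable overgroup via the action on the same tree after passing to a common finite-index subgroup and then using that \FA is a commensurability invariant among finitely generated groups, which is a theorem of Serre modulo finite generation; see \cite{Serre}). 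Thus every finite-index $H$ in $\SL_1(\Z G)$ acts on a tree without global fixed point, and being finitely generated it splits non-trivially.

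An alternative and perhaps more self-contained path avoids \FA entirely: one observes that any finite-index $H \le \SL_1(\Z G)$ contains a finite-index subgroup isomorphic to $H_1 \times H_2$ with $H_1$ free non-abelian, writes $H_1 = A \ast B$ (a genuine free product), hence $H_1 \times H_2 = (A \times H_2) \ast_{H_2} (B \times H_2)$ is a non-trivial amalgam; then one uses that \emph{being virtually a non-trivial amalgam does not immediately give that the group itself is one}, which is exactly why the paper's phrasing says ``every subgroup of finite index'' rather than ``$\SL_1(\Z G)$ itself''. To get the statement as literally stated, restrict attention to the finite-index subgroups: a general finite-index $H$ need not split, so one must instead prove the splitting for $H$ directly. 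The correct tool is: a finitely generated group all of whose finite-index subgroups fail \FA itself fails \FA only in a weak sense, so the genuinely correct claim is the one the theorem makes, namely about the finite-index subgroups, and for these the $H_1 \times H_2$ argument plus the amalgam-from-direct-product trick above applies verbatim once one knows $H \cap (\SL_2(\Z)\text{-factor})$ is infinite and virtually free of rank $\ge 2$, which follows because $[\SL_1(\Z G) : H] < \infty$ forces the projection to the $\SL_2(\Z)$-factor to have finite-index image.

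\textbf{The main obstacle} is the passage from ``$\SL_1(\Z G)$ is virtually a non-trivial amalgam'' to ``\emph{every} finite-index subgroup is a non-trivial amalgam'': this is not automatic for amalgams in general (unlike, say, the property of being virtually free), and requires genuinely exploiting the direct-product structure coming from the $\Ma_2(\Q)$-component together with the fact that the relevant factor, $\SL_2(\Z)$, is virtually free of rank at least two — so that intersecting with any finite-index subgroup still leaves a free non-abelian group in one coordinate, which can then be fed into the ``free product times a group is an amalgam'' construction. The rest is bookkeeping: tracking the Wedderburn component, choosing compatible orders, and citing the commensurability invariance results from \cite{abelianizationpaper} and the geometric group theory from \cite{Serre}.
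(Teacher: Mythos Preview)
Your reduction to the $\Ma_2(\Q)$-component and the first assertion (that $\U(\Z G)$ is \emph{virtually} a non-trivial amalgam) are essentially the paper's argument. The genuine gap is in the second assertion, where you must show that \emph{every} finite-index $H\leq \SL_1(\Z G)$ is itself a non-trivial amalgam, not merely that it contains one of finite index.

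Both routes you propose fail at exactly this point. The \FA approach breaks twice. First, failing \FA only yields that $H$ is a non-trivial amalgam \emph{or} an HNN extension (equivalently, has $\Z$ as a quotient); your parenthetical claim that ``an amalgam can always be extracted'' from a fixed-point-free action is false (think of $\Z$). Second, \FA is \emph{not} a commensurability invariant: the paper itself records that $\SL_2(\mathcal{I}_3)$ has \FA while its index-$2$ subgroup $\SL_2(\Z[\sqrt{-3}])$ does not. The correct inheritance is that \FA passes \emph{up} to finite-index overgroups, so knowing that a finite-index subgroup $H_1\times H_2\leq H$ fails \FA tells you nothing about $H$. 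Your direct approach then repeats the same conflation: you establish that $H$ \emph{contains} $H_1\times H_2$ with $H_1$ free non-abelian, but $H$ is not of product form, so the ``free product times a group'' trick does not apply to $H$ itself.

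What you are missing is the use of the Kurosh subgroup theorem on a suitable \emph{quotient}. In $\SL_2(\Z)\cong C_4\ast_{C_2}C_6$ the amalgamated $C_2$ is central, so $U:=C_2\times\prod_{i\neq i_0}\SL_1(\O e_i)$ is normal in both factors of $\Gamma=B\ast_U D$, and $\Gamma/U\cong C_2\ast C_3$ is a genuine free product. For finite-index $H\leq\Gamma$, the image $H/(H\cap U)$ has finite index in $C_2\ast C_3$; Kurosh then forces it to be a non-trivial free product (it is infinite and not virtually cyclic). Pulling back along the surjection $H\twoheadrightarrow H/(H\cap U)$ via \Cref{amalgamconserved} gives the non-trivial amalgam decomposition of $H$ itself. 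Equivalently, you may project $H$ to the $\SL_2(\Z)$-factor, pass to $\PSL_2(\Z)=C_2\ast C_3$, apply Kurosh there, and pull back; but the Kurosh step is the indispensable ingredient your proposal lacks.
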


The outline of the paper is as follows. In Section~\ref{preliminaries}, we give the necessary background on free products with amalgamation, $2$-by-$2$ matrices over quaternion algebras and higher modular groups. In Section~\ref{introduction clifford} we initiate the $\E_2$ and $\GE_2$ part of the higher modular groups. Thereafter, in \Cref{finite presentation clifford}, we give a finite concrete presentation of the latter groups.  In Section~\ref{mainsection} we succeed in obtaining a non-trivial amalgamated decomposition for $\E_2( \Gamma_n(\Z )$ over $\E_2(\Gamma_{n-1}(\Z))$. Finally in the last section \ref{sectiongroupring} we consider $\U (\Z G)$, prove the dichotomy and obtain the refinements obtained above. In the appendix we give a non-exotic example of a quaternion order which is not universal for $\GE_2$.

\section{Preliminaries}\label{preliminaries}

\subsection{Free products with amalgamation}

First we recall the definition of a free product with amalgamation.

\begin{definition}
Let $G_1$, $G_2$ and $H$ be groups and $f_1: H \rightarrow G_1$ and $f_2: H \rightarrow G_2$ be  injective homomorphisms. Let $N$ be the normal subgroup of the free product $G_1 \ast G_2$ generated by elements of the form $f_1(h)f_2(h)^{-1}$ for $h \in H$. Then the \textit{free product with amalgamation} $G_1 \ast_H G_2$ is defined as the quotient $(G_1 \ast G_2) / N$. This free product with amalgamation is said to be trivial if $f_1$ or $f_2$ are surjective. If not stated differently, we always assume the free product with amalgamation to be non-trivial. The group $H$ is called the \emph{amalgamated subgroup}. 
\end{definition}

Throughout the paper, abusing terminology, we will often abbreviate the term ``free product with amalgamation'' simply as ``amalgamated product''. 

The next proposition is standard (see for example \cite[Lemma 3.2]{Zie}).

\begin{proposition}\label{amalgamconserved}\label{amalgamdirectproduct}
Let $G=G_1 \ast_H G_2$ be a free product with amalgamation. If $\Phi$ is an epimorphism from $\tilde{G}$ to $G$, then $\tilde{G}$ is the free product with amalgamation $\Phi^{-1}(G_1) \ast_{\Phi^{-1}(H)} \Phi^{-1}(G_2)$. In particular, for any group $H$, the direct product $G \times H$ is again a free product with amalgamation. 
\end{proposition}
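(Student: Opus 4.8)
The plan is to prove the two assertions of \Cref{amalgamconserved} separately, the second being a direct corollary of the first.

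\medskip

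\emph{The main assertion.} Suppose $\Phi\colon\tilde G\twoheadrightarrow G$ is an epimorphism with $G=G_1\ast_H G_2$. First I would set $\tilde G_i=\Phi^{-1}(G_i)$ for $i=1,2$ and $\tilde H=\Phi^{-1}(H)$; note that $\tilde H=\tilde G_1\cap\tilde G_2$ since $H=G_1\cap G_2$ inside $G$ (this is part of the basic theory of amalgams: the images of $G_1$ and $G_2$ in $G_1\ast_H G_2$ intersect exactly in the image of $H$). Next I would check that $\tilde G_1$ and $\tilde G_2$ generate $\tilde G$: any $\tilde g\in\tilde G$ has $\Phi(\tilde g)=c_1c_2\cdots c_k$ with each $c_j$ in $G_1$ or $G_2$; lifting each $c_j$ to some $\tilde c_j$ in the corresponding $\tilde G_i$, the element $\tilde g(\tilde c_1\cdots\tilde c_k)^{-1}$ lies in $\ker\Phi\subseteq\Phi^{-1}(H)=\tilde H\subseteq\tilde G_1$, so $\tilde g\in\langle\tilde G_1,\tilde G_2\rangle$. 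Then the universal property of the amalgamated product gives a canonical surjection $\Psi\colon\tilde G_1\ast_{\tilde H}\tilde G_2\twoheadrightarrow\tilde G$ (induced by the inclusions $\tilde G_i\hookrightarrow\tilde G$, which agree on $\tilde H$); surjectivity is exactly the generation statement just proved.

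\medskip

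\emph{Injectivity of $\Psi$.} This is the step I expect to be the main obstacle, and I would handle it with the normal form theorem for amalgamated products. Take an element $w$ of $\tilde G_1\ast_{\tilde H}\tilde G_2$ in reduced form, $w=\tilde a_1\tilde a_2\cdots\tilde a_m$ with $\tilde a_j$ alternately in $\tilde G_1\setminus\tilde H$ and $\tilde G_2\setminus\tilde H$, $m\ge 1$; I must show its image in $\tilde G$ is non-trivial. The key observation is that $\Phi$ maps $\tilde G_i\setminus\tilde H$ into $G_i\setminus H$ (if $\Phi(\tilde a)\in H$ then $\tilde a\in\Phi^{-1}(H)=\tilde H$), so $\Phi(\tilde a_1)\cdots\Phi(\tilde a_m)$ is a reduced word in $G_1\ast_H G_2=G$, hence non-trivial in $G$ by the normal form theorem. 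Therefore $\Phi(\Psi(w))\neq 1$ in $G$, so $\Psi(w)\neq 1$ in $\tilde G$. Thus $\Psi$ is an isomorphism and $\tilde G\cong\tilde G_1\ast_{\tilde H}\tilde G_2$, with the claimed subgroups. One should also record that the amalgam is non-trivial: $\tilde H\neq\tilde G_i$ because $\Phi$ maps $\tilde H$ onto $H$ and $\tilde G_i$ onto $G_i$ with $H\neq G_i$.

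\medskip

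\emph{The ``in particular''.} For an arbitrary group $K$, apply the main assertion to the projection $\Phi\colon G\times K\twoheadrightarrow G$. Then $G\times K$ is the amalgamated product $\Phi^{-1}(G_1)\ast_{\Phi^{-1}(H)}\Phi^{-1}(G_2)=(G_1\times K)\ast_{H\times K}(G_2\times K)$, which is non-trivial since $H\times K\neq G_i\times K$. (Here I am using the notation of the excerpt, where the second factor is inadvertently also called $H$; in the write-up I would rename it to avoid the clash.) This completes the proof. The only subtlety worth a remark is that everything rests on the standard structure theory of amalgamated free products — the normal form theorem and the fact that the factor subgroups inject and intersect in the amalgamated subgroup — for which I would cite \cite{Serre}.
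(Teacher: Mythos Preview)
Your argument is correct: the normal-form step is exactly the right way to see injectivity of $\Psi$, and you handle the generation and non-triviality cleanly. Note only that the paper does not actually prove this proposition; it declares it ``standard'' and refers to \cite[Lemma~3.2]{Zie}, so there is no in-paper proof to compare against. A slightly slicker packaging of the same content, which you may prefer for the write-up, is via Bass--Serre theory: let $G$ act on its Bass--Serre tree $T$ with edge stabilizer $H$ and vertex stabilizers $G_1,G_2$; pulling back along $\Phi$ gives an action of $\tilde G$ on $T$ (without inversion) whose edge and vertex stabilizers are precisely $\Phi^{-1}(H)$ and $\Phi^{-1}(G_i)$, and the structure theorem then yields $\tilde G\cong\Phi^{-1}(G_1)\ast_{\Phi^{-1}(H)}\Phi^{-1}(G_2)$ in one line. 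This is equivalent to what you wrote but hides the normal-form bookkeeping; your version has the virtue of being self-contained.
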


The following proposition is folklore, but as we will use it later on in the paper, we state it here explicitly. 

\begin{proposition}\label{disjoint generators amalgam}
Let $G$ be a group and let $A,B,C \subseteq G$ be pairwise disjoint subsets such that $$G:= \langle A\cup B \cup C \mid R_{AC}, R_{BC}\rangle,$$ where $R_{AC}$, $R_{BC}$ is a collection of relations on the generators in $A\cup C$ and $B\cup C$ respectively. Write $G_{AC}$, $G_{BC}$ and $G_C$ for the subgroup of $G$ generated by $A\cup C$, $B \cup C$ and $C$ respectively. Then $$G \cong G_{AC} \ast_{G_C} G_{BC}.$$
\end{proposition}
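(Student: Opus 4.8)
The plan is to exhibit the claimed isomorphism by constructing maps in both directions and checking they are mutually inverse, using the universal property of free products with amalgamation together with the universal property of a group presentation.

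First I would set up the universal property side. The group $G_{AC} \ast_{G_C} G_{BC}$ comes equipped with canonical homomorphisms $\iota_{AC}\colon G_{AC} \to G_{AC} \ast_{G_C} G_{BC}$ and $\iota_{BC}\colon G_{BC} \to G_{AC} \ast_{G_C} G_{BC}$ that agree on $G_C$; here I use that $C$ generates a common subgroup $G_C$ inside both $G_{AC}$ and $G_{BC}$, which sits inside $G$, so the amalgam makes sense (one subtlety: one must know the maps $G_C \hookrightarrow G_{AC}$ and $G_C \hookrightarrow G_{BC}$ are injective, which is automatic since both are subgroups of the ambient $G$). Composing the inclusions $G_{AC}\hookrightarrow G$ and $G_{BC}\hookrightarrow G$ gives two homomorphisms into $G$ that agree on $G_C$, hence by the universal property a homomorphism $\Psi\colon G_{AC} \ast_{G_C} G_{BC} \to G$. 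This $\Psi$ is surjective since its image contains $A\cup B\cup C$, which generates $G$.

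Next I would build the inverse using the presentation of $G$. Since $G = \langle A \cup B \cup C \mid R_{AC}, R_{BC}\rangle$, to define a homomorphism $\Phi\colon G \to G_{AC}\ast_{G_C} G_{BC}$ it suffices to send each generator in $A\cup C$ to the image of the corresponding element of $G_{AC}$ under $\iota_{AC}$, each generator in $B$ to the image of the corresponding element of $G_{BC}$ under $\iota_{BC}$ (and note the generators in $C$ are sent consistently because $\iota_{AC}$ and $\iota_{BC}$ agree on $G_C$), and then verify that all relations in $R_{AC}\cup R_{BC}$ are satisfied. But the relations in $R_{AC}$ only involve generators from $A\cup C$, so they already hold inside $G_{AC}$ (a relation of $G$ on these symbols is in particular a relation of the subgroup presentation — here one uses that $G_{AC}$ is \emph{defined} as the subgroup generated by $A\cup C$ and inherits exactly the relations among those elements), hence their images hold in $G_{AC}\ast_{G_C} G_{BC}$; symmetrically for $R_{BC}$. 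Thus $\Phi$ is well defined.

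Finally I would check $\Phi$ and $\Psi$ are mutually inverse: both composites fix the generating set $A\cup B\cup C$ (respectively the generating sets of $G_{AC}\ast_{G_C} G_{BC}$ coming from $A\cup C$ and $B\cup C$), so they are the identity. The step I expect to be the genuine (if modest) obstacle is the claim that $G_{AC}$, as an abstract group, is presented by $\langle A\cup C\mid R_{AC}\rangle$ — i.e. that the only relations among the elements $A\cup C$ inside $G$ are consequences of $R_{AC}$. This is \emph{not} automatic from $G$ being presented by $\langle A\cup B\cup C\mid R_{AC},R_{BC}\rangle$ in general; it is precisely the content that must be extracted, and the clean way to handle it is to turn the argument around: define $\widetilde{G}_{AC} := \langle A\cup C\mid R_{AC}\rangle$ and $\widetilde{G}_{BC}:=\langle B\cup C\mid R_{BC}\rangle$ abstractly, observe $\langle C\rangle$ embeds in each (this is where disjointness of $A$, $B$, $C$ and the form of the relations is used), form $\widetilde{G}_{AC}\ast_{\langle C\rangle}\widetilde{G}_{BC}$, and check by comparing presentations that this amalgam has exactly presentation $\langle A\cup B\cup C\mid R_{AC},R_{BC}\rangle$, hence is isomorphic to $G$ via the obvious generator-preserving map; then identify $G_{AC}$ with the image of $\widetilde{G}_{AC}$ and conclude. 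I would present the proof in this second form, as it sidesteps the delicate subgroup-presentation issue entirely and reduces everything to a presentation comparison plus the standard fact that in a free product with amalgamation the factors embed.
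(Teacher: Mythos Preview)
The paper offers no proof; the proposition is simply stated as folklore. Your first approach is correct and already complete --- the worry you raise about it is misplaced. To define $\Phi$ you only need each relation in $R_{AC}$ to hold among the images of $A\cup C$ in the amalgam; since that relation holds in $G$ it holds in the subgroup $G_{AC}\subseteq G$, hence under $\iota_{AC}$. Nowhere do you need $G_{AC}$ to be abstractly presented by $\langle A\cup C\mid R_{AC}\rangle$, and indeed that stronger assertion can fail (take $A=\{a\}$, $B=\{b\}$, $C=\{c\}$, $R_{AC}=\{c^2\}$, $R_{BC}=\{c^3\}$: then $c=1$ in $G$, so $G_{AC}\cong\Z$, not $\Z\ast C_2$). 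The mutual-inverse check on generators then goes through exactly as you wrote.

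Your proposed second form, by contrast, has a genuine gap at ``observe $\langle C\rangle$ embeds in each'': you have not said which group $\langle C\rangle$ is, and the subgroups generated by $C$ inside $\widetilde G_{AC}$ and inside $\widetilde G_{BC}$ need not be isomorphic (in the same example they are $C_2$ and $C_3$), so there is no single group to amalgamate over. You can form the pushout over the free group $F(C)$ and recognise its presentation as $\langle A\cup B\cup C\mid R_{AC},R_{BC}\rangle\cong G$, but since the maps $F(C)\to\widetilde G_{AC}$ and $F(C)\to\widetilde G_{BC}$ need not inject, this is not an amalgamated free product in the paper's sense and the ``factors embed'' fact does not apply; hence you cannot identify $\widetilde G_{AC}$ with $G_{AC}$ along this route. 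Present the first argument and drop the detour.
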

As $G_C \subseteq G_{AC} \cap G_{BC}$, the injections of $G_C$ in both components in the amalgamation are the canonical injections.

\subsection{A matrix ring over a totally definite rational quaternion algebra}\label{matquat}

Recall that the rational quaternion algebra $\left( \frac{u,v}{\Q}\right)$ is the $\Q$-vector space with basis $\{1,i ,j, k \}$ and multiplication determined by the relations
$$i^2 = u,\quad j^2 = v,\quad ij = k = -ji,$$
where $u,v \in \Q \setminus \{0\}$. This quaternion algebra is said to be totally definite if $u$ and $v$ are negative.

Let $a = a_1\cdot 1 + a_2\cdot i + a_3\cdot j + a_4\cdot k \in \left( \frac{u,v}{\Q}\right)$, with $a_1, a_2, a_3, a_4 \in \Q$. The \emph{real part} of $a$ is defined to be $\operatorname{Re}(a) = a_1$ and we put $\overline{a}=a_1\cdot 1 - a_2\cdot i - a_3\cdot j - a_4\cdot k$ and $|a| = \sqrt{a\overline{a}}$. It is well-known that the latter defines an analytic norm (i.e.\ it also  satisfies the triangle inequality) on $\left( \frac{u,v}{\Q}\right)$ if and only if the quaternion algebra is totally definite. The square, i.e.\ the map $a \mapsto a\overline{a}$, makes some quaternion algebras right Euclidean, a notion which generalizes the notion of Euclidean rings to the non-commutative setting.  
\begin{definition}\label{Euclidean}
Let $R$ be a ring and $\delta: R \rightarrow \N$ a map with $\delta(0) = 0$.
We call $R$ a \emph{right Euclidean} ring if $$\forall\ a,b \in R \text{ with } b\neq 0, \exists\  q,r \in R: a= bq + r\text{ with } \delta(r) < \delta(b) \text{ or } r = 0.$$
\end{definition}

By \cite{Fitz} the only totally definite rational quaternion algebras having a right Euclidean order are $\qa{-1}{-1}{\Q}$,$\qa{-1}{-3}{\Q}$ and $\qa{-2}{-5}{\Q}$, where $\delta$ is the square map. Recall that a \emph{$\Z$-order} (or for brevity just \emph{order}) of a finite-dimensional rational algebra $A$ over $\Q$ is a subring of $A$ that is finitely generated as a $\Z$-module and contains a $\Q$-basis of $A$. Moreover in the mentioned cases the right Euclidean order is the (up to conjugation) unique maximal order.

For our purposes, we need to deal with a generalization of the special linear group over a totally definite rational quaternion algebra. To do so, we consider the elements of reduced norm $1$ in $2$-by-$2$ matrix rings over an order in a quaternion algebra $\qa{u}{v}{\Q}$. 

For convenience, we recall some definitions and standard notations. Let $A$ be a finite dimensional central simple algebra over a field $K$. Let $E$ be a splitting field of $A$, i.e. $E \otimes_K A \cong \Ma_{n}(E)$, a full matrix ring over $E$. The \emph{reduced norm} of $a \in A$ is defined as \[\operatorname{RNr}_{A/K}(a) = \det(1 \otimes a). \] 
Note that $\operatorname{RNr}_{A/K}$ is a multiplicative map, $\operatorname{RNr}_{A/K}(A) \subseteq K$ and $\operatorname{RNr}_{A/K}(a)$ does only depend on $K$ and $a \in A$ (and not on the chosen splitting field $E$ and isomorphism $E \otimes_K A \cong \Ma_{n}(E)$), see \cite[page 51]{EricAngel1}. For a subring $R$ of $A$, put
 \[ \SL_1(R) = \{ a \in \U(R)\ |\ \operatorname{RNr}_{A/K}(a) = 1 \}, \]
 which is a (multiplicative) group. If $A = \Ma_n(D)$ and $R= \Ma_n(\O)$ with $\O$ an order in $D$, then we also write $\SL_1(A)= \SL_n(D)$ and $\SL_1(R) = \SL_n(\O)$.

\begin{remark}\label{Dieudonneremark} For $\OO$ an order in $\qa{u}{v}{\Q}$ the group $\SL_2(\OO)$ may alternatively be defined via the so called Dieudonn\'e determinant. For more details on Dieudonn\'e determinants we refer the interested reader to the very accessible paper \cite{Aslaksen} or to \cite{Cohn3, draxl}. The following formula for the \emph{Dieudonn\'e determinant} $\Delta$ of $\sm{a & b \\ c & d} \in  \Ma_2\left(\qa{u}{v}{\Q}\right)$ can be derived.
\begin{equation}\label{defDieu}
 \Delta \left(\sm{a & b \\ c & d }\right) = \sqrt{\vert a \vert^2\vert d \vert^2 + \vert b \vert^2\vert d \vert^2 - 2\mathrm{Re}\left[a\overline{c}d\overline{b}\right]}.
\end{equation}
There is a link between the Dieudonn\'e determinant of a matrix and the reduced norm. Indeed, by \cite[Theorem~1, page~146]{draxl}, for $A=\sm{a & b \\ c & d} \in \Ma_2\left(\qa{u}{v}{\Q}\right)$, we get that 
\begin{equation}\label{deltared}
\Delta(A)^2 = \mbox{RNr}_{\Ma_2\left(\qa{u}{v}{\Q}\right)/\Q}(A)  .
\end{equation}
This shows that the reduced norm of a matrix in $\Ma_2\left(\qa{u}{v}{\Q}\right)$ is rational and positive and thus the groups $\GL_2\left(\O\right)$ and $\SL_2\left(\O\right)$ are equal.
\end{remark}

\subsection{Clifford algebras}
We recall the concepts of Clifford algebras and Clifford groups. This is based on \cite{1985ahlfors} and \cite{ElsGrunMen}.
Let $R$ be a unital associative ring. The \emph{Clifford algebra $\Cliff_{n}(R)$} (with $n \geq 1$) is the unital associative $R$-algebra generated by $n-1$ elements $i_1, \ldots, i_{n-1}$ subject to the following relations 
\begin{equation}
i_hi_k = -i_ki_h, \textrm{ for } h \neq k\quad \textrm{ and }\quad i_h^2=-1 \textrm{ for } 1 \leq h \leq n-1.
\end{equation}
Let $\V^n(R)$ denote the free $R$-submodule of $\Cliff_n(R)$ with basis $\lbrace i_0 := 1, i_1, \ldots, i_{n-1} \rbrace$.
The image of $a \in \Cliff_n(R)$ under the linear automorphism determined by $i_h \mapsto -i_h$ is denoted  by $a'$. The map $a \mapsto a^{*}$ consists in reversing the order of the factors in a basis element $i_{h_1}\ldots i_{h_m}$ of the algebra. Note that $(a b)^*=b^*a^*$.
Finally the composition of these two maps gives another map denoted by $a \mapsto \overline{a}=a'^{*}=\left(a^{*}\right)'$. Note that if $a \in \V^n(R)$, then $a^* = a $ and $a' = \overline{a}$. 

Consider now $R=\R$. For small values of $n$, one has the following isomorphisms.
$$\Cliff_1(\R) \cong \R,\ \Cliff_2(\R) \cong \C \textrm{ and } \Cliff_3(\R) \cong \qa{-1}{-1}{\R}.$$
The vector space $\Cliff_{n}(\R)$ of $\R$-dimension $2^{n-1}$ is endowed with the Euclidean norm, \[|a| = \left(\sum a_I^2 \right)^\frac{1}{2} \quad \text{ for } a = \sum a_I I \in \Cliff_n(\R),\  I=i_{j_1} \cdots i_{j_r},\ 1 \leq j_1 < ... < j_r \leq n-1.
\]
It is clear that if $a \in \V^n(\R)$, then $a\overline{a}=\vert a \vert^2$. Thus the inverse of every non-zero $a \in \V^n(\R)$ is given by $a^{-1}=\overline{a}\vert a \vert^{-2}$.
The real \emph{Clifford group $\Gamma_n(\R)$} is the multiplicative group of all products of invertible vectors of $\V^n(\R)$. By \cite[Lemma 1.2]{1985ahlfors}, for $a,b \in \Gamma_n(\R)$, $\vert ab \vert=\vert a \vert\vert b \vert$.

This allows to define the general linear group and the special linear group over the Clifford group.

\begin{definition}[{\cite[Definition 2.1]{1985ahlfors}}]\label{defcliffmoeb}
\begin{eqnarray*}
\GL(\Gamma_n(\R)) & = & \left \lbrace \begin{pmatrix} a & b \\ c & d \end{pmatrix} \mid a,b,c,d \in \Gamma_n(\R) \cup \lbrace 0 \rbrace,\ ad^*-bc^* \in \R\setminus \lbrace 0\rbrace, \right.\\
& & \phantom{ \left \lbrace \begin{pmatrix} a & b \\ c & d \end{pmatrix} \mid \right. } \left. ab^*,\ cd^*,\ c^*a,\ d^*b \in \V^n(\R) \right \rbrace,\\
\SL_+(\Gamma_n(\R)) & = & \left \lbrace \begin{pmatrix} a & b \\ c & d \end{pmatrix} \in \GL(\Gamma_n(\R))  \mid ad^*-bc^* =  1 \right \rbrace.
\end{eqnarray*}
\end{definition}

One can straightforwardly prove that the map $\GL(\Gamma_n(\R)) \to \R\setminus \lbrace 0\rbrace \colon \sm{a & b \\ c & d} \mapsto ad^* - bc^*$ is multiplicative. Moreover, all the above definitions make sense when replacing the field $\R$ by $\Q$. Hence we also get the rational Clifford group $\Gamma_n(\Q)$ and $\SL_+(\Gamma_n(\Q))$.

In \cite{1985ahlfors}, it has been shown that $\GL(\Gamma_n(\R))$ and $\SL_+(\Gamma_n(\R))$ are multiplicative groups. The same is true when replacing $\R$ by $\Q$. For full disclosure, in \cite[Definition 3.1]{ElsGrunMen} a significantly more involved definition of $\SL_+(\Gamma_n(\R))$ is given, but it has been later shown in \cite[Theorem 3.7]{ElsGrunMen} that both definitions are equivalent. 

Although the real Clifford algebra of dimension $3$ is isomorphic to the classical quaternion algebra over $\R$, this isomorphism does not extend to an isomorphism between $\SL_2\left(\qa{-1}{-1}{\R}\right)$, as defined in \Cref{matquat}, and $\SL_+(\Gamma_3(\R))$. However, as the following theorem shows, the special linear group $\SL_2\left(\qa{-1}{-1}{\R}\right)$ is isomorphic to the special linear group over a higher Clifford group.

\begin{theorem}[{\cite[Section 6]{ElsGrunMen}}]\label{isocliffquat}
The groups $\SL_2\left(\qa{-1}{-1}{\R}\right)$ and $\SL_+(\Gamma_4(\R))$ are isomorphic.
\end{theorem}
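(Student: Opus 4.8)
The plan is to prove this geometrically, by showing that $\SL_2\left(\qa{-1}{-1}{\R}\right)$ and $\SL_+(\Gamma_4(\R))$ are both connected two-fold covering groups of $\mathrm{SO}^{+}(5,1)$, and then invoking the uniqueness of such a cover. Here $\mathrm{SO}^{+}(5,1)$ enters as the group of orientation-preserving conformal (M\"obius) transformations of the $4$-sphere $S^{4}=\partial\mathbb{H}^{5}$, and each of the two groups acts on a model of $S^{4}$ by M\"obius transformations.

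On the Clifford side I would use the Ahlfors--Vahlen action from the setting of \Cref{defcliffmoeb}: the rule $\sm{a&b\\c&d}\colon x\mapsto(ax+b)(cx+d)^{-1}$ gives an action of $\SL_+(\Gamma_4(\R))$ on $\widehat{\V^4(\R)}:=\V^4(\R)\cup\{\infty\}$, which under the Euclidean identification $\V^4(\R)=\R^{4}$ is $\widehat{\R^{4}}=S^{4}$, by conformal maps. Three facts are needed. (i) The kernel is $\{\pm I\}$: reducing ``$\sm{a&b\\c&d}$ acts trivially'' to the diagonal case $x\mapsto axa^{*}$, one is forced to $a\in\Gamma_4(\R)$ central in $\Cliff_4(\R)\cong\qa{-1}{-1}{\R}\oplus\qa{-1}{-1}{\R}$ with $aa^{*}=1$, and the only such $a$ are $\pm1$. (ii) The image is all of $\mathrm{SO}^{+}(5,1)$: it contains the translations $\sm{1&b\\0&1}$ ($b\in\V^4(\R)$), the conformal-linear maps $x\mapsto axa^{*}$ ($a\in\Gamma_4(\R)$, which realise all dilations and all rotations), and the inversion $\sm{0&-1\\1&0}$, and by Liouville's theorem these generate $\mathrm{SO}^{+}(5,1)$. (iii) The group is connected: $I$ and $-I$ are joined inside $\SL_+(\Gamma_4(\R))$ by the rotation path $\sm{\cos\theta&-\sin\theta\\\sin\theta&\cos\theta}$, $0\le\theta\le\pi$, so (using (i), (ii)) $\SL_+(\Gamma_4(\R))$ is a connected group with $\SL_+(\Gamma_4(\R))/\{\pm I\}\cong\mathrm{SO}^{+}(5,1)$.

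The same three steps apply to $\SL_2\left(\qa{-1}{-1}{\R}\right)$ acting on $\qa{-1}{-1}{\R}\cup\{\infty\}\cong S^{4}$ via $z\mapsto(az+b)(cz+d)^{-1}$: this is a genuine group action since $\qa{-1}{-1}{\R}$ is a division algebra and, by \Cref{Dieudonneremark}, $cz+d$ is non-invertible only on a $3$-sphere; the kernel of the action is $\{\pm I\}$; the image contains the translations, the maps $z\mapsto azb$ ($|a|=|b|=1$) and $z\mapsto\lambda^{2}z$ ($\lambda>0$) coming from $\sm{a&0\\0&c}$ and $\sm{\lambda&0\\0&\lambda^{-1}}$, and $z\mapsto -z^{-1}$, hence is all of $\mathrm{SO}^{+}(5,1)$; and $\SL_2\left(\qa{-1}{-1}{\R}\right)$ is connected. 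To glue the two pictures together, fix the $\R$-linear norm-preserving isomorphism $\psi\colon\qa{-1}{-1}{\R}\xrightarrow{\ \sim\ }\V^4(\R)$, $1\mapsto i_{0}$, $i\mapsto i_{1}$, $j\mapsto i_{2}$, $k\mapsto i_{3}$ (extended by $\psi(\infty)=\infty$); as a Euclidean isometry of $\R^{4}$ it is itself a M\"obius transformation, so conjugation by it carries the M\"obius group of $\qa{-1}{-1}{\R}\cup\{\infty\}$ onto that of $\widehat{\V^4(\R)}$ and its index-two (hence normal) orientation-preserving part accordingly. Thus $\SL_2\left(\qa{-1}{-1}{\R}\right)$ and $\SL_+(\Gamma_4(\R))$ are connected two-fold covers of one and the same group $\mathrm{SO}^{+}(5,1)$; since the maximal compact $\mathrm{SO}(5)$ of $\mathrm{SO}^{+}(5,1)$ has $\pi_{1}=\Z/2$, such a cover is unique (it is $\operatorname{Spin}(5,1)$), and therefore $\SL_2\left(\qa{-1}{-1}{\R}\right)\cong\SL_+(\Gamma_4(\R))$.

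The main obstacle is precisely steps (i)--(iii) on the Clifford side, i.e.\ establishing that $\SL_+(\Gamma_4(\R))$ is honestly $\operatorname{Spin}(5,1)$ — connected, with kernel $\{\pm I\}$ and image $\mathrm{SO}^{+}(5,1)$ — which forces one through the bookkeeping with the centre of $\Cliff_4(\R)$ and with the connectedness path. One cannot avoid this by writing a naive algebra isomorphism: $\Cliff_4(\R)\cong\qa{-1}{-1}{\R}\oplus\qa{-1}{-1}{\R}$ is not a division algebra, the entries of a matrix in $\SL_+(\Gamma_4(\R))$ run over \emph{all} of $\Gamma_4(\R)\cup\{0\}$ including odd elements — so the group is not conjugate into $\Ma_2$ of the even part $\cong\qa{-1}{-1}{\R}$ — and the projection of $\Cliff_4(\R)$ onto one factor restricts on $\Gamma_4(\R)$ to a homomorphism with infinite kernel. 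If an explicit matrix formula is wanted, it can be extracted afterwards by transporting the elementary (and diagonal) generators of $\SL_2\left(\qa{-1}{-1}{\R}\right)$ through $\psi$; but verifying directly that this assignment is multiplicative runs into the mismatch between Clifford multiplication on $\V^4(\R)$ and quaternion multiplication that the geometric argument sidesteps.
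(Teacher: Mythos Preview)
The paper does not give its own proof of this statement; it is quoted from \cite[Section~6]{ElsGrunMen} and used as a black box. Your covering-space argument is a valid way to supply a proof, and the three steps you identify on each side are correct: both groups act on a copy of $S^{4}$ by orientation-preserving M\"obius transformations with kernel $\{\pm I\}$, both surject onto $\mathrm{SO}^{+}(5,1)$ (translations, the conformal linear maps $x\mapsto axa^{*}$ resp.\ $z\mapsto azb$, dilations, and the inversion suffice to generate), and the rotation path $\sm{\cos\theta & -\sin\theta\\ \sin\theta & \cos\theta}$ together with connectedness of $\mathrm{SO}^{+}(5,1)$ gives connectedness. Uniqueness of the connected double cover then finishes.

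One place where your sketch deserves an extra line is the kernel computation on the Clifford side. You reduce correctly to $a\in\Gamma_4(\R)$ central with $aa^{*}=1$, but the centre of $\Cliff_4(\R)$ is two-dimensional, spanned by $1$ and $\omega=i_1i_2i_3$ (with $\omega^{2}=1$, $\omega^{*}=-\omega$, $\bar\omega=\omega$). What pins $a$ down to $\pm1$ is the extra constraint that elements of $\Gamma_4(\R)$ satisfy $a\bar a\in\R_{>0}$: for $a=\alpha+\beta\omega$ one gets $a\bar a=a^{2}=\alpha^{2}+\beta^{2}+2\alpha\beta\omega$, forcing $\alpha\beta=0$, and then $aa^{*}=1$ leaves only $a=\pm1$. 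With that detail filled in, your proof is complete.

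By contrast, the treatment in \cite{ElsGrunMen} proceeds by an explicit construction rather than by identifying both groups abstractly with $\mathrm{Spin}(5,1)$; your route is shorter and more conceptual, at the cost (which you acknowledge) of not directly producing a matrix formula for the isomorphism.
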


The following lemma is well-known and will be useful later in concrete computations. 

\begin{lemma}\label{simulVn}
Let $R$ be equal to $\R$ or $\Q$. Let $a, b, c, d \in \Gamma_n(R)$ and $x \in \V^n(R)$. The following properties hold.
\begin{enumerate}
\item Either both or none of $ab^{-1}$ and $a^*b$ belong to $\V^n(R)$. Idem for $a^{-1}b$ and $ab^{*}$.
\item $axa^{*} \in \V^n(R)$. 
\item If $\sm{ a & b \\ c & d } \in \GL(\Gamma_n(R))$, then $ax\overline{d} + b\overline{x}\overline{c} \in \V^n(R)$.
\end{enumerate} 
\end{lemma}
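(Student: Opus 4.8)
The plan is to reduce everything to the two basic facts already available: for $v\in\V^n(R)$ one has $v^*=v$, $v'=\overline v$, and $v\overline v=|v|^2\in R$ (so $v^{-1}=\overline v|v|^{-2}$ when $v\neq0$); and the two involutions behave as $(xy)^*=y^*x^*$, $(xy)'=x'y'$, hence $\overline{xy}=\overline y\,\overline x$. I would also use the characterisation of $\V^n(R)$ inside $\Cliff_n(R)$ as the fixed space of the anti-automorphism $\,*\,$ that lies in the degree-$\le 1$ part, but in practice the cleanest route is: an element $z$ of $\Cliff_n(R)$ that is already known to lie in the $R$-span of $1,i_1,\dots,i_{n-1}$ is in $\V^n(R)$ iff $z^*=z$; and more usefully, a product of the shape $u\,x\,u^*$ or a sum of such, with $x\in\V^n(R)$ and $u\in\Gamma_n(R)$, is visibly $*$-symmetric, so the only thing to check is membership in the right graded piece — which is exactly what Ahlfors' setup gives us for Clifford-group conjugation. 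For part (2) specifically I would argue by induction on the length of a factorisation $u=v_1\cdots v_m$ with $v_t\in\V^n(R)$ invertible: the case $m=1$ is that $v_1 x v_1^* = v_1 x v_1\in\V^n(R)$, which is the standard fact that reflections preserve the vector space (a one-line computation splitting $x$ into its component along $v_1$ and the component orthogonal to it, using $v_1\overline{v_1}=|v_1|^2$); the inductive step is immediate since $(v_1\cdots v_m)x(v_1\cdots v_m)^* = v_1\bigl((v_2\cdots v_m)x(v_2\cdots v_m)^*\bigr)v_1^*$.

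For part (1): suppose $ab^{-1}\in\V^n(R)$. Then $ab^{-1}=(ab^{-1})^*=(b^{-1})^*a^* = (b^*)^{-1}a^*$, so $a^*b = b\,b^*\,(ab^{-1})\,(b^*)^{-1}\cdot$ — rather, the clean manipulation is: from $ab^{-1}\in\V^n(R)$ and $a^*b\in\Gamma_n(R)\cup\{0\}$, write $a^*b = b^*\cdot\bigl((b^*)^{-1}a^*b\bigr)=b^*\overline{(ab^{-1})}^{\,?}$; to avoid guessing signs I would instead note $a^*b = (b^* a)^*$ is not what we want, so the correct identity is $a^*b=\overline{(b^{-1}a)^*}\cdots$. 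Concretely: $ab^{-1}\in\V^n(R)\iff \overline{ab^{-1}}=(ab^{-1})'=a'(b')^{-1}\in\V^n(R)$ and equals $ab^{-1}$; but also $ab^{-1}\in\V^n(R)\iff (ab^{-1})^*=ab^{-1}$, i.e. $(b^*)^{-1}a^*=ab^{-1}$, i.e. $a^*b = b^*(ab^{-1})b = b^*\,a\,b^{-1}b\cdot$ — I will commit the sign-bookkeeping to the write-up. The upshot is that each of the four statements ``$\xi\in\V^n(R)$'' translates, via applying $\,*\,$ or $\,'\,$ and multiplying on one side by the Clifford-group element $b$ or $b^*$, into another of the four, and conversely; so they stand or fall together. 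The ``idem'' clause for $a^{-1}b$ versus $ab^*$ is the same argument with left/right interchanged (or obtained from the first by applying the anti-automorphism $\,*\,$ globally). The only genuinely careful part here is tracking that $\,'\,$ and $\,*\,$ are $R$-linear, $\,'\,$ is an algebra automorphism, $\,*\,$ is an anti-automorphism, and both fix $\V^n(R)$ pointwise on the degree-$1$ part while $\,*\,$ fixes $\V^n(R)$ entirely; once these are lined up, each implication is a one-line conjugation identity.

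For part (3), I would use that $\sm{a&b\\c&d}\in\GL(\Gamma_n(R))$ forces (among the defining conditions) $ab^*\in\V^n(R)$ and $c^*d\in\V^n(R)$, i.e. also $cd^*\in\V^n(R)$ by part (1). Given $x\in\V^n(R)$, compute $ax\overline d + b\overline x\,\overline c = ax d' + b x' c'$ (using $\overline x=x'$ since $x\in\V^n(R)$, and $\overline d=d'^{\,*}$ — here I should be careful: $\overline d$ is not $d'$ in general since $d\notin\V^n(R)$). The right move is to recognise $ax\overline d+b\overline x\,\overline c$ as (up to the $\,*\,$ involution) the $(1,1)$-type entry produced when one conjugates the ``parabolic'' matrix $\sm{1&x\\0&1}$ by $\sm{a&b\\c&d}$, or equivalently to check directly that this element is $\,*\,$-fixed using $(ax\overline d)^* = d x^* a^* \cdot$(signs from $\overline{\phantom{d}}$ vs $*$) together with the relation $ad^*-bc^*=1$ and its companions $a b^*=b a^*$, $c d^* = d c^*$ (consequences of the $\GL$-axioms and part (1)); the graded-degree-$\le1$ claim then follows because each summand is, after using part (2)-type reductions, a sum of terms of the form $(\text{vector})$. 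The main obstacle I anticipate is precisely this: getting the interplay of the three anti-/automorphisms $\,*\,,\,'\,,\overline{\phantom{x}}$ right in part (3) so that $ax\overline d + b\overline x\,\overline c$ is manifestly $\,*\,$-symmetric and manifestly of degree $\le 1$ — everything else is bookkeeping with identities that are essentially already in Ahlfors. A clean alternative for (3), which I would try first, is to observe that $\sm{1&x\\0&1}\in\SL_+(\Gamma_n(R))$ whenever $x\in\V^n(R)$ (directly from the definition) and that $\SL_+(\Gamma_n(R))$ is a group by the cited results of Ahlfors and Els--Gr\"unewald--Mennicke; then $\sm{a&b\\c&d}\sm{1&x\\0&1}\sm{a&b\\c&d}^{-1}$ lies in $\SL_+(\Gamma_n(R))$, and reading off its entries (the top-left being, up to a $*$, exactly $ax\overline d + b\overline x\,\overline c$ after using $\sm{a&b\\c&d}^{-1}=\sm{\overline d & -\overline b\\ -\overline c & \overline a}$, which is itself a standard consequence of $ad^*-bc^*=1$) shows it must satisfy the $\GL(\Gamma_n(R))$-membership conditions, one of which is precisely the asserted $\V^n(R)$-membership. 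This reduces (3) to quoting the group structure plus the explicit inverse formula, which I expect to be the least error-prone path.
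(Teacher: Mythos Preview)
The paper does not prove this lemma at all: it simply cites \cite[Lemma~1.4]{1985ahlfors} for (1), \cite[(3.9)]{ElsGrunMen} for (2), and \cite[Proof of Theorem~3.7]{ElsGrunMen} for (3), noting that the arguments there carry over from $\Q$ to $\R$ and from $\SL_+$ to $\GL$. So your proposal is not a different route to the paper's proof---it is an attempt to supply the proofs the paper outsources. Your plan for (2), induction on the length of a vector factorisation of $a$, is exactly the standard argument and is fine.

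For (1), however, there is a real gap hiding behind the ``sign-bookkeeping''. Your manipulations tend toward showing that $a^*b$ is $*$-fixed whenever $ab^{-1}\in\V^n(R)$, but $*$-fixedness inside $\Gamma_n(R)$ does \emph{not} imply membership in $\V^n(R)$: for instance $i_1i_2i_3i_4\in\Gamma_n(R)$ satisfies $(i_1i_2i_3i_4)^*=i_1i_2i_3i_4$ yet is not a vector. The clean argument (which is what Ahlfors does) uses part~(2): if $a^*b\in\V^n(R)$ then conjugating by $(a^*)^{-1}$ gives $(a^*)^{-1}(a^*b)\bigl((a^*)^{-1}\bigr)^*=(a^*)^{-1}(a^*b)a^{-1}=ba^{-1}\in\V^n(R)$, whence $ab^{-1}=(ba^{-1})^{-1}\in\V^n(R)$; the converse is analogous. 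So (1) should be deduced \emph{from} (2), not argued in parallel via one-sided multiplications. For (3), your ``clean alternative'' via conjugating $\sm{1&x\\0&1}$ does not land where you expect. First, the inverse of $\sm{a&b\\c&d}$ in $\SL_+(\Gamma_n(R))$ is $\sm{d^*&-b^*\\-c^*&a^*}$ (with $*$, not $\overline{\phantom{x}}$); second, carrying out that conjugation yields entries $1-axc^*$, $axa^*$, $-cxc^*$, $1+cxa^*$, none of which is $ax\overline d+b\overline x\,\overline c$. The expression you need actually arises from expanding $(ax+b)\overline{(cx+d)}=|x|^2a\overline c+(ax\overline d+b\overline x\,\overline c)+b\overline d$, which is how Elstrodt--Grunewald--Mennicke get at it in the course of showing that the M\"obius action preserves $\V^n(R)$; the direct computational route you list first is therefore the one to pursue, but it needs more than $*$-symmetry for the same reason as in (1).
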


The first item is exactly \cite[Lemma 1.4]{1985ahlfors}. The second item is \cite[(3.9)]{ElsGrunMen}. The last item is proven in \cite[Proof of Theorem 3.7]{ElsGrunMen}. In \cite{ElsGrunMen}, everything is done for the field $\Q$, but the proofs remain valid for $\R$. Moreover, the last item is proven  for a matrix $\sm{a & b \\ c & d} \in \SL_+(\Gamma_n(\Q))$, but again the proof remains valid for a matrix in $\GL(\Gamma_n(\R))$.

Let $\Gamma_n(\Z)$ be the multiplicative monoid consisting of the elements of $\Cliff_n(\Z)$ that are products of non-zero elements of $\V^n(\R)$, i.e. $$\Gamma_n(\Z)=\Cliff_n(\Z) \cap \Gamma_n(\R).$$ 

Using the Euclidean norm, it is easy to see that the unit group $\U(\Gamma_n(\Z)) = \langle i_1, \ldots, i_{n-1} \rangle$ (see \Cref{structure_U_Gamma_n_Z} below for a presentation).

One defines $\SL_+(\Gamma_n(\Z))$ as the subgroup of $\SL_+(\Gamma_n(\R))$ consisting of all matrices with entries in $\Gamma_n(\Z) \cup \lbrace 0 \rbrace$. More precisely,
\begin{equation}\label{sl+Z}
\SL_+(\Gamma_n(\Z)) = \SL_+(\Gamma_n(\R)) \cap \Ma_2(\Gamma_n(\Z) \cup \lbrace 0 \rbrace).
\end{equation}
Note that $\SL_+(\Gamma_n(\Z))$ indeed is a group since the inverse of a matrix in $\SL_+(\Gamma_n(\Z))$ also belongs to $\SL_+(\Gamma_n(\Z))$ by \cite[Proposition 3.2]{ElsGrunMen}.

\subsection{The groups $\GE_2$ and $\E_2$}\label{defGe2E2} 

We recall from \cite{Cohn1} some definitions concerning the groups $\GE_2(R)$ and $\E_2(R)$ for an arbitrary unital ring $R$.
The group $\GE_2(R)$ is the group generated by all matrices 
\[ [\mu, \nu] = \left( \begin{matrix} \mu & 0 \\ 0 & \nu \end{matrix}\right) \textrm{ for } \mu, \nu \in \U(R), \qquad E(x) = \left( \begin{matrix} x & 1 \\ -1 & 0  \end{matrix}\right) \textrm{ for } x \in R. \]
For $\mu \in \U(R)$, put $D(\mu) = [\mu, \mu^{-1}]$. Define the groups $\D_2(R) = \langle [\mu, \nu] \ | \ \mu, \nu \in \U(R) \rangle$ and $\DE_2(R)= \langle D(\mu) \mid \mu \in \U(R) \rangle$.

In the group $\GE_2(R)$ the following relations hold, see \cite[(2.2)-(2.4)]{Cohn1}
\begin{align}
E(x)E(0)E(y) & = E(0)^2E(x+y), & & x, y \in R \tag{R1}\label{R1} \\
E(\mu)E(\mu^{-1})E(\mu) & = E(0)^2D(\mu), & & \mu \in \U(R) \tag{R2}\label{R2} \\
E(x)[\mu, \nu] & = [\nu, \mu] E(\nu^{-1}x\mu),  & &  x \in R, \; \mu, \nu \in \U(R) \tag{R3}\label{R3} \\
E(0)^2 & = D(-1)\tag{R4}.\label{R4}
\end{align}
 
The group $\E_2(R)$ is the group generated by all matrices
\[E(x) = \left( \begin{matrix} x & 1 \\ -1 & 0  \end{matrix}\right) \textrm{ for } x \in R. \]

Note that, by \eqref{R2}, $\E_2(R)$ contains the matrices $D(\mu)=\left[\mu,\mu^{-1}\right]$. Moreover, it can be shown, that $\E_2(R)$ is the group generated by the classical elementary matrices and that $E_2(D)$ is a subgroup of $\SL_2(D)$ for $D$ a division algebra.

\begin{definition}
The ring $R$ is said to be \textit{universal for $\GE_2$} if the relations \eqref{R1}-\eqref{R4} together with a set of defining relations in the group $\D_2(R)$, form a complete set of defining relations of $\GE_2(R)$. These relations  are called the \textit{universal relations}.
\end{definition}

In order to describe $\GE_2(R)$ and $\E_2(R)$ of some ring $R$ we will often try to provide a full presentation of these groups via abstract generators and relations. From now on we will adopt an abuse of notation: we will not make the distinction between the elements $E(x)$ and $[\mu, \nu]$ as matrices in $\GE_2(R)$ and the corresponding elements of an abstractly generated group which satisfy the same relations.

In \cite[Proposition 3.4]{abelianizationpaper}, it is shown that if $R$ is a ring such that there exists a set $\Phi$ of relations solely defined in $\E_2(R)$ such that $\Phi$ together with the universal relations yield a full list of relations for $\GE_2(R)$, then
\begin{equation}\label{theorem difference GE2 and E2}
\GE_2(R)/\E_2(R) \cong \mathcal{U}(R)^{ab}.
\end{equation}

The following definition is also taken from \cite{Cohn1}.

\begin{definition}
A ring $R$ is a \textit{$\GE_2$-ring} if $\GL_2(R) =\GE_2(R)$.
\end{definition}

The following proposition is folklore.   

\begin{proposition}\label{EuclideanGE2}
Left and Right Euclidean rings are $\GE_2$-rings. 
\end{proposition}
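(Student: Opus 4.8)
The plan is to show that every matrix in $\GL_2(R)$ can be reduced to the identity by left-multiplying by elementary and diagonal matrices, which is exactly the statement that $R$ is a $\GE_2$-ring. I will treat the right Euclidean case; the left Euclidean case is symmetric (using right multiplication instead, or by passing to the opposite ring).

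First I would recall the two basic reduction moves available in $\E_2(R)$. From the generators $E(x)$ one recovers the classical elementary transvections: $E(x)E(0)^{-1} = \sm{1 & x \\ 0 & 1}$ up to sign conventions, and conjugates/products give $\sm{1 & 0 \\ y & 1}$; together with the matrices $D(\mu) = [\mu,\mu^{-1}] \in \E_2(R)$ coming from \eqref{R2}, and the diagonal matrices $[\mu,\nu] \in \GE_2(R)$ by definition. So left-multiplication by $\sm{1 & x \\ 0 & 1}$ performs the row operation $\text{(row 1)} \mapsto \text{(row 1)} + x\cdot\text{(row 2)}$, and similarly for the lower elementary matrix, and $E(0)$ itself realises (up to sign) the row swap.

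Next, the core of the argument: take $A = \sm{a & b \\ c & d} \in \GL_2(R)$. Since $A$ is invertible, the left ideal $Ra + Rc$ is all of $R$ (this follows because the first column cannot be annihilated on the left by a common factor without $A$ being a left zero-divisor; more carefully, invertibility of $A$ forces $1 \in Ra + Rc$). Now run the Euclidean algorithm on the first column using the right Euclidean function $\delta$: if $c \neq 0$, write $a = cq + r$ with $\delta(r) < \delta(c)$ or $r = 0$, and left-multiply $A$ by $\sm{1 & -q \\ 0 & 1}$ to replace $a$ by $r$; then swap rows via $E(0)$ and repeat. Because $\delta$ strictly decreases at each step on the nonzero entry, after finitely many steps the $(2,1)$-entry becomes $0$. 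At that point $A$ has been transformed by an element of $\E_2(R)$ into an upper triangular matrix $\sm{a' & b' \\ 0 & d'}$, which lies in $\GL_2(R)$, so $a', d' \in \U(R)$. Finally, left-multiply by $[a'^{-1}, d'^{-1}] \in \D_2(R)$ to get $\sm{1 & b'' \\ 0 & 1}$, and then by $\sm{1 & -b'' \\ 0 & 1} \in \E_2(R)$ to reach the identity. Hence $A \in \GE_2(R)$, proving $\GL_2(R) = \GE_2(R)$.

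The step I expect to need the most care is the claim that the Euclidean algorithm on the first column actually terminates at $0$ rather than getting stuck at a nonzero entry — i.e.\ that invertibility of $A$ guarantees the gcd-type process drives the $(2,1)$-entry to zero. The clean way to see this is: the moves above are all invertible and preserve the property of being in $\GL_2(R)$, and they also preserve the left ideal generated by the two first-column entries; the algorithm reduces $\delta$ of the lower entry strictly until it is $0$; once it is $0$ the matrix is upper triangular and invertible, forcing the diagonal to be units, so no obstruction remains. One should also note the minor bookkeeping point that $E(0)$ squares to $D(-1)$ by \eqref{R4}, so the sign conventions introduced by row swaps are absorbed into $\DE_2(R) \subseteq \E_2(R)$ and cause no trouble. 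This is all routine, so I would state it compactly rather than belabour the induction on $\delta$.
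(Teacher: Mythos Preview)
The paper does not actually prove this proposition; it is stated as folklore with no proof given. Your argument is precisely the standard folklore reduction, and it is what the paper later invokes implicitly (see the proof of Proposition~\ref{E2=SL+}, where the same Euclidean-reduction idea is carried out by \emph{right} multiplication by the matrices $E(0)$ and $E(-q)$).

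One small slip to fix: you announce the right Euclidean case and then use left multiplication by $\sm{1 & -q \\ 0 & 1}$, which replaces $a$ by $a - qc$. But the right Euclidean division you wrote is $a = cq + r$, so the remainder is $a - cq$, not $a - qc$; in a non-commutative ring these differ. Your row-operation argument as written is the proof for \emph{left} Euclidean rings. For right Euclidean rings either switch to column operations (right multiplication by $\sm{1 & 0 \\ -q & 1}$, which sends $a$ to $a - bq$ and matches $a = bq + r$), or, as you yourself suggest, pass to the opposite ring. This is exactly the sidedness the paper uses in Proposition~\ref{E2=SL+}. Once this is corrected, everything else---termination by descent on $\delta$, the triangular endgame with units on the diagonal, and the absorption of signs from $E(0)^2 = D(-1)$---is fine. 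The paragraph about $Ra + Rc = R$ is not needed for the argument and can be dropped.
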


The discrete subrings $R$ of $\C$ that are $\GE_2$-rings have been classified in \cite[Theorem 3]{Dennis}. Namely, a discrete subring of $\C$ is a $\GE_2$-ring if and only if it is one of the following seven rings:
\begin{enumerate}
\item $\mathcal{I}_d$, the ring of integers in $\Q(\sqrt{-d})$, for $d=1,2,3,7,11$,
\item $\Z$,
\item $\Z[\sqrt{-3}]$.
\end{enumerate}

Note that the numbers $d=1,2,3,7,11$ are exactly those for which $\mathcal{I}_d$ is a Euclidean ring.

\section{The groups $\GE_2$ and $\E_2$ over the Clifford group}\label{introduction clifford}

In this section we define the groups $\GE_2$ and $\E_2$ over the Clifford group $\Gamma_n(\Z)$. As $\Gamma_n(\Z)$ is merely a group, but not a ring, one has to be careful while generalizing these concepts. 
Once we have well defined the groups $\GE_2(\Gamma_n(\Z))$ and $\E_2(\Gamma_n(\Z))$, we determine a presentation in terms of generators and relations of the latter. 
Recall that $i_0 = 1$ and set 
\begin{equation}\label{defB}
\mathcal{B} = \lbrace \pm i_h \mid 0 \leq h \leq n-1 \rbrace. 
\end{equation}

\begin{definition}\label{defGECliff}
The group $\GE_2(\Gamma_n(\Z))$ is the subgroup of $\GL(\Gamma_n(\Z))$ generated by all matrices 
\begin{align*} [\mu, \nu] & = \left( \begin{matrix} \mu & 0 \\ 0 & \nu \end{matrix}\right) \textrm{ for } \mu, \nu \in \U(\Gamma_n(\Z)) \textrm{ and } \mu\nu^{*} \in \R \setminus \lbrace 0 \rbrace , \\  E(x) & = \left( \begin{matrix} x & 1 \\ -1 & 0  \end{matrix}\right) \textrm{ for } x \in \V^n(\Z). \end{align*}
\end{definition}
Note that the matrices $\left[\mu, \nu\right] \in \GL_2(\Gamma_n(\Z))$ are all of the form 
$\left[\mu, \pm (\mu^*)^{-1}\right]$.
Indeed, as $\mu$ and $\nu$ are units in $\Gamma_n(\Z)$ and $\mu\nu^* \in \R$, $\mu\nu^* = \pm 1$. Hence
\begin{equation}\label{diagonalmatricGL}
\GE_2(\Gamma_n(\Z)) = \langle E(x), \left[\mu, \pm (\mu^*)^{-1}\right] \mid x \in \V^n(\Z), \mu \in \U(\Gamma_n(\Z)) \rangle.
\end{equation}

\begin{definition}\label{defECliff}
The group $\E_2(\Gamma_n(\Z))$ is the subgroup of $\GL(\Gamma_n(\Z))$ generated by all matrices 
$$ E(x) = \left( \begin{matrix} x & 1 \\ -1 & 0  \end{matrix}\right) \textrm{ for } x \in \V^n(\Z). $$
\end{definition}

\begin{remark}\label{isomE2normalE2Clifford}
The definition of $\GE_2(\Gamma_n(\Z))$ slightly differs from the definition of $\GE_2(R)$ for a ring $R$, due to the condition $\mu \nu^{*} \in \R$ for the entries $\mu$ and $\nu$ in the diagonal matrix $[\mu, \nu]$. For instance, for the case $n=2$, when $\Gamma_2(\Z) \cong \Z[i]$, the group $\GE_2(\Gamma_2(\Z))$ is isomorphic to a subgroup of index $2$ in $\GE_2(\Z[i]) \cong \GL_2(\Z[i])$ (defined according to \Cref{defGe2E2}). The condition $\mu\nu^{*} \in \R$ ensures precisely that $\GE_2(\Gamma_n(\R))$ is a group. Concerning the group $\E_2$, \Cref{defECliff} and the definition given in \Cref{defGe2E2} give the same groups when $\Gamma_n(\Z)$ is a commutative ring. Thus $\E_2(\Gamma_1(\Z)) \cong \E_2(\Z) \cong \SL_2(\Z)$ and $\E_2(\Gamma_2(\Z)) \cong \E_2(\Z[i]) \cong \SL_2(\Z[i])$. 
\end{remark}

In $\GE_2(\Gamma_n(\Z))$ the universal relations \eqref{R1}-\eqref{R4} hold, if we specify $x, y, \mu, \nu$ correctly for the Clifford group context. This means the following. 
\begin{itemize}
\item[\eqref{R1}] with $x, y \in \V^n(\Z)$. 
\item[\eqref{R2}] with $\mu \in \V^n(\Z) \cap \U(\Gamma_n(\Z)) = \mathcal{B}$ and $D(\mu)$ the matrix $\left[\mu, \mu^{-1} \right]$ with $\mu \in \mathcal{B}$.
\item[\eqref{R3}] with $\mu, \nu \in \U(\Gamma_n(\Z))$ and $\mu\nu^* \in \R \setminus \lbrace 0 \rbrace$. For the latter, note that if $[\mu, \nu] \in \GL(\Gamma_n(\Z))$, then also $[\overline{\nu}, \overline{\mu}] \in \GL(\Gamma_n(\Z))$. Hence by \Cref{simulVn}, for $x \in \V^n(\Z)$, $\overline{\nu}x\mu$ is an element in $\V^n(\Q)$. As $\overline{\nu}=\vert \nu \vert^2 \nu^{-1}$, also $\nu^{-1}x\mu$ is in $\V^n(\Q)$. As $\nu$ is by definition a unit in $\Gamma_n(\Z)$, the element $\nu^{-1}x\mu$ is contained in $\V^n(\Z)$.
\item[\eqref{R4}] stays the same relation. 
\end{itemize} 
In $\E_2(\Gamma_n(\Z))$, the relations \eqref{R1}, \eqref{R2} and \eqref{R4} hold with the elements quantified as in the previous paragraph. The equation \eqref{R3} specializes to 
\begin{align}
E(x)D(\mu) & = D(\mu^{-1}) E(\mu x\mu),  & &  x \in \V^n(\Z), \; \mu \in \U(\Gamma_n(\Z)). \tag{R3'}\label{R3'} 
\end{align}

Note that by \eqref{R2}, $\E_2(\Gamma_n(\Z))$ contains the elements $D(\mu) =  [ \mu, \mu^{-1}]$ with $\mu \in \mathcal{B}$. Nevertheless we will often add these diagonal matrices to the set of generators, as it simplifies the calculations.
Further define the following subgroups of $\GE_2(\Gamma_n(\Z))$ and $\E_2(\Gamma_n(\Z))$ respectively.
\begin{eqnarray}
\D_2(\Gamma_n(\Z)) & = & \langle [\mu, \nu] \mid \mu, \nu \in \U(\Gamma_n(\Z)), \mu\nu^* \in \R \setminus \lbrace 0 \rbrace \rangle. \label{diagonalgeneral}\\
\DE_2(\Gamma_n(\Z)) & = & \langle [\mu, \mu^{-1}] \mid \mu \in \mathcal{B} \rangle. \label{diagonalDE2}
\end{eqnarray}

The following lemma describes the group $\DE_2(\Gamma_n(\Z))$ more precisely. 

\begin{lemma}\label{description DE2}
The group $\DE_2(\Gamma_n(\Z))$ is isomorphic to $\U(\Gamma_n(\Z))$ for every $n \geq 1$. 
\end{lemma}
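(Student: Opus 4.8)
The plan is to exhibit an explicit isomorphism $\DE_2(\Gamma_n(\Z)) \to \U(\Gamma_n(\Z))$. Recall that $\DE_2(\Gamma_n(\Z))$ is generated by the diagonal matrices $D(\mu) = [\mu, \mu^{-1}]$ with $\mu \in \mathcal{B} = \{\pm i_h \mid 0 \le h \le n-1\}$, and that $\U(\Gamma_n(\Z)) = \langle i_1, \dots, i_{n-1}\rangle$ by the remark preceding the lemma. First I would observe that for $\mu, \nu \in \U(\Gamma_n(\Z))$ one has $D(\mu)D(\nu) = [\mu, \mu^{-1}][\nu, \nu^{-1}] = [\mu\nu, (\mu\nu)^{-1}] = D(\mu\nu)$, so the map $\mu \mapsto D(\mu)$ is a group homomorphism from $\U(\Gamma_n(\Z))$ to the group of diagonal matrices, whose image is generated by the $D(\mu)$ with $\mu \in \mathcal{B}$ (since $\mathcal{B}$ generates $\U(\Gamma_n(\Z))$), hence equals $\DE_2(\Gamma_n(\Z))$. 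Thus $\varphi\colon \U(\Gamma_n(\Z)) \to \DE_2(\Gamma_n(\Z))$, $\mu \mapsto D(\mu)$, is a surjective homomorphism.

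The only remaining point is injectivity: if $D(\mu) = [\mu, \mu^{-1}]$ is the identity matrix, then in particular the $(1,1)$-entry $\mu$ equals $1 \in \Gamma_n(\Z)$, so $\mu = 1$. Hence $\ker\varphi$ is trivial and $\varphi$ is an isomorphism, which gives the claim. I would phrase this compactly: the assignment is visibly a bijection onto its image because a diagonal matrix $[\mu,\mu^{-1}]$ is determined by, and determines, its entry $\mu$, and multiplicativity follows from the block-diagonal multiplication rule in $\GL(\Gamma_n(\Z))$ noted after Definition~\ref{defcliffmoeb} (the map $\sm{a&b\\c&d}\mapsto ad^*-bc^*$ is multiplicative, and on diagonal matrices the product is again diagonal with the expected entries).

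There is essentially no obstacle here; the lemma is a bookkeeping statement. The one thing to be careful about is that $D(\mu)$ for $\mu \in \mathcal{B}$ really does lie in $\GE_2(\Gamma_n(\Z))$ in the sense of Definition~\ref{defGECliff}, i.e.\ that the side condition $\mu\nu^* = \mu(\mu^{-1})^* \in \R\setminus\{0\}$ is satisfied: for $\mu \in \mathcal{B}$ we have $\mu^* = \mu$ when $\mu = \pm 1$ and $\mu^* = i_h = \mu$ when $\mu = \pm i_h$, while $\mu^{-1} = \overline{\mu} = \mu'^{\,*} $; for $\mu = \pm i_h$ one has $(\mu^{-1})^* = (-\mu)^* = -\mu$, so $\mu(\mu^{-1})^* = -\mu^2 = 1 \in \R$, and for $\mu = \pm 1$ it is trivially $1$. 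Hence all the generators $D(\mu)$, $\mu\in\mathcal{B}$, are legitimate elements of $\DE_2(\Gamma_n(\Z))$, and by multiplicativity $\varphi(\U(\Gamma_n(\Z)))$ consists entirely of such diagonal matrices, so the isomorphism is with the full unit group $\U(\Gamma_n(\Z))$ as stated.
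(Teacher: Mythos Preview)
Your argument has a genuine gap in the multiplicativity step. You claim
\[
D(\mu)D(\nu) = [\mu,\mu^{-1}][\nu,\nu^{-1}] = [\mu\nu,(\mu\nu)^{-1}] = D(\mu\nu)
\]
for all $\mu,\nu \in \U(\Gamma_n(\Z))$, but diagonal matrices multiply entrywise, so the product is $[\mu\nu,\ \mu^{-1}\nu^{-1}]$, and $\mu^{-1}\nu^{-1} \neq \nu^{-1}\mu^{-1} = (\mu\nu)^{-1}$ as soon as $\mu$ and $\nu$ do not commute. For instance, with $n \geq 3$, $\mu = i_1$, $\nu = i_2$ one finds $\mu^{-1}\nu^{-1} = i_1 i_2$ while $(\mu\nu)^{-1} = -i_1 i_2$. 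Thus $\mu \mapsto [\mu,\mu^{-1}]$ is \emph{not} a group homomorphism on $\U(\Gamma_n(\Z))$, and your proof only works for $n \leq 2$.

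The fix, which is exactly what the paper does, is to observe that a product $D(i_{h_1})\cdots D(i_{h_m})$ has lower-right entry $i_{h_1}^{-1}\cdots i_{h_m}^{-1}$, and since $i_h^* = i_h$ and $(xy)^* = y^*x^*$ one has $i_{h_1}^{-1}\cdots i_{h_m}^{-1} = \bigl((i_{h_1}\cdots i_{h_m})^*\bigr)^{-1}$. So every element of $\DE_2(\Gamma_n(\Z))$ is of the form $[a,(a^*)^{-1}]$ with $a \in \U(\Gamma_n(\Z))$, and the correct map is
\[
\varphi\colon \U(\Gamma_n(\Z)) \longrightarrow \DE_2(\Gamma_n(\Z)),\qquad a \longmapsto [a,(a^*)^{-1}].
\]
This \emph{is} a homomorphism, because $a \mapsto (a^*)^{-1}$ is the composition of two anti-homomorphisms and hence a homomorphism. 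On generators $a \in \mathcal{B}$ one has $a^* = a$, so $\varphi(a) = D(a)$ and the image is exactly $\DE_2(\Gamma_n(\Z))$; injectivity follows as you say by reading off the $(1,1)$-entry.
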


\begin{proof}
A matrix in $\DE_2(\Gamma_n(\Z))$ is a product of matrices of the form $\sm{ i_h & 0 \\ 0 & i_h^{-1}}$ for $1 \leq h \leq n-1$. Thus such a matrix has the form $\sm{
i_{h_1}i_{h_2}\ldots i_{h_m} & 0 \\ 0 & i_{h_1}^{-1}i_{h_2}^{-1}\ldots i_{h_m}^{-1}}.$
In general a matrix in $\DE_2(\Gamma_n(\Z))$ is hence of the form $ \sm{ a & 0 \\ 0 & (a^{*})^{-1}}$,
for some $a \in \U(\Gamma_n(\Z))$. 
Now let $\varphi$ be the following map:
\begin{align*}
\varphi: \U(\Gamma_n(\Z)) & \rightarrow \DE_2(\Gamma_n(\Z))  \\
a & \mapsto
\sm{a & 0 \\ 0 & (a^{*})^{-1}}.
\end{align*}
Since $^{*}$ and inversion are commuting anti-involutions, $\varphi$ is a homomorphism, which is clearly bijective.
\end{proof}

\begin{remark}\label{structure_U_Gamma_n_Z} We briefly elucidate the structure of the invertible elements of the monoid $\Gamma_n(\Z)$. For $n \geq 2$, the group \begin{align*} U = \U(\Gamma_n(\Z)) \cong \langle -1, i_1, i_2, ..., i_{n-1}  \ |\  & (-1)^2 = 1,\ -1 \text{ central}, \\ & i_h^2 = -1,\ i_hi_k = (-1)i_ki_h\ (1 \leq h < k \leq n-1)\ \rangle\end{align*}
has order $2^n$ and exponent $4$. 
Assume first $n$ odd. Then $\ZZ(U) = \langle -1 \rangle \cong C_2$ and $U/\ZZ(U) \cong C_2^{n-1}$, hence $U$ is extraspecial. Write $n = 2k + 1$, then, 
\[ \U(\Gamma_n(\Z)) \cong 
\begin{cases}  Q_8^{\Ydown k} & \text{  if } n \equiv 1, 3 \mod 8 \\ 
 Q_8^{\Ydown k-1} \Ydown D_8 & \text{  if } n \equiv 5, 7 \mod 8,
\end{cases}\]
where $\Ydown$ denotes a central product and $Q_8^{\Ydown r}$ denotes the iterated central product with $r$ factors isomorphic to $Q_8$.

Second assume that $n$ is even and set $z = i_1i_2\cdots i_{n-1}$. Then $\ZZ(U) = \langle\ -1,\ z\ \rangle$; the element $z \in U$ has order $2$ if $n \equiv 0 \bmod 4$ and order $4$ otherwise.  This gives the following.
\[ \U(\Gamma_n(\Z)) \cong 
\begin{cases} \langle i_1, ..., i_{n-2} \rangle \times \langle z \rangle \cong \U(\Gamma_{n-1}(\Z)) \times C_2  & \text{  if } n \equiv 0 \mod 4 \\ 
 \langle i_1, ..., i_{n-2}\rangle \Ydown \langle z \rangle \cong \U(\Gamma_{n-1}(\Z)) \Ydown C_4 & \text{  if } n \equiv 2 \mod 4,
\end{cases}\] 
In particular, $\U(\Gamma_2(\Z)) \cong C_4$, $\U(\Gamma_3(\Z)) \cong Q_8$, $\U(\Gamma_4(\Z)) \cong Q_8 \times C_2$, $\U(\Gamma_5(\Z)) \cong Q_8 \Ydown D_8$. \end{remark}

In \cite[page~160]{Cohn2} a description of the extra relations needed aside from the universal relations to obtain a presentation of $\GE_2(R)$ for $R$ an order in certain algebraic number fields has been obtained. In \cite[Proposition 3.1]{abelianizationpaper}, a quaternion variant of this theorem has been proven. In the same way it is possible to extend this theorem to the Clifford case, which is given below. We briefly point out the differences in the two proofs and sketch why it is possible to generalize the proof given in \cite{abelianizationpaper}.

\begin{proposition}\label{clifford cohn}
Let $K= \Q(\sqrt{-d})$ with $d\geq 0$ and $H = \qa{u}{v}{\Q}$ a totally definite quaternion algebra. Let $\O$ be an order in $K$, an order in $H$ or let $\O=\Gamma_n(\Z)$. Then, a complete set of defining relations for $\GE_2(\O)$ is given by the universal relations together with
\begin{equation}
(E(\overline{a}) E(a))^m = E(0)^{2} \quad \mbox{ if } 1 < |a| = \sqrt{m} < 2, \label{relation alpha}
\end{equation}
and $a \in \O$, or $a \in \V^n(\Z)$, if $\O = \Gamma_n(\Z)$.

A complete set of defining relations for $\E_2(\O)$ is given by \eqref{R1}, \eqref{R2}, \eqref{R3'}, \eqref{R4}, the relations in the group $\DE_2(\O)$ and relation \eqref{relation alpha}.
\end{proposition}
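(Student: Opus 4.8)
The plan is to follow the strategy of \cite[page~160]{Cohn2} and its quaternion adaptation \cite[Proposition 3.1]{abelianizationpaper} almost verbatim, pointing out only the places where the Clifford setting forces a modification. The underlying mechanism in all three cases is the same: one has a presentation of $\GE_2$ by the universal relations together with a set of ``short vector'' relations, and the content of the theorem is that in the present arithmetic situation the only short vectors that can occur have $1<|a|=\sqrt m<2$, and that for these the relation \eqref{relation alpha} suffices. First I would recall that Cohn's general machinery (in the formulation used in \cite{abelianizationpaper}) reduces the problem to analysing, for each element $a$ of the coefficient domain, the ``reduction'' behaviour of the word $E(a)$, and to checking that two continued-fraction-type expansions of the same matrix differ by a consequence of the universal relations plus the relations coming from norms $|a|^2\in\{2,3\}$. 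The key numerical input is that $\Gamma_n(\Z)\subseteq\Cliff_n(\Z)$ carries the Euclidean norm with $a\overline a=|a|^2$ for $a\in\V^n(\Z)$ (established in the excerpt), so division-with-remainder estimates go through exactly as over $\mathcal I_d$ or over a totally definite quaternion order.

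Second, I would set up the division algorithm needed for Cohn's argument: given a matrix in $\GE_2(\Gamma_n(\Z))$ written as a word $D(\mu_0)E(a_1)\cdots E(a_r)$ with $a_i\in\V^n(\Z)$, one wants to ``shorten'' it whenever some $|a_i|$ is small, using \eqref{R1}--\eqref{R4} and \eqref{R3'} to move diagonal factors $D(\mu)$ past the $E(a_i)$ (this is where one uses that $\mu x\mu\in\V^n(\Z)$ for $\mu\in\mathcal B$, already noted after \eqref{R3}), and using \eqref{relation alpha} precisely when $|a_i|^2\in\{2,3\}$, i.e. $1<|a_i|<2$. The crucial finiteness statement is that over $\Gamma_n(\Z)$ there are no vectors $a\in\V^n(\Z)$ with $|a|^2=1$ other than the units $\mathcal B$ (handled by the $D(\mu)$ relations) and none requiring a separate relation beyond $|a|^2\in\{2,3\}$ — this is exactly the content of \cite[Theorem 3]{Dennis} in the commutative case and of the corresponding lemma in \cite{abelianizationpaper} in the quaternion case, and over $\V^n(\Z)$ with the standard Euclidean norm the integer vectors of norm-square $2$ or $3$ are the only ``obstructing'' ones because any longer vector can be reduced modulo a unit or modulo a previous entry. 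For the $\E_2$ statement one simply discards the generators $[\mu,\nu]$ with $\mu\nu^*\in\R\setminus\{\pm1\}$, keeps only $\DE_2(\Gamma_n(\Z))$, and notes that \eqref{R3} degenerates to \eqref{R3'}; the relations of $\DE_2(\O)$ replace the ``defining relations in $\D_2(\O)$'' from the universal package, and \Cref{description DE2} identifies this group with $\U(\Gamma_n(\Z))$, whose presentation is recorded in \Cref{structure_U_Gamma_n_Z}.

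The main obstacle — and the only genuinely new point relative to \cite{abelianizationpaper} — is that $\Gamma_n(\Z)$ is \emph{not a ring}: the elements $E(x)$ are only defined for $x\in\V^n(\Z)$, not for arbitrary $x\in\Cliff_n(\Z)$, and products of the relevant matrices stay inside $\GL(\Gamma_n(\Z))$ only because of the Vahlen conditions $ab^*,cd^*\in\V^n(\R)$. So throughout Cohn's reduction one must check that every intermediate word still represents a matrix with entries in $\Gamma_n(\Z)\cup\{0\}$ and that every quantity to which one wishes to apply $E(\cdot)$ genuinely lies in $\V^n(\Z)$. This is precisely what \Cref{simulVn} is for: parts (1)--(3) guarantee that the expressions arising from conjugating $E(x)$ by a diagonal matrix, or from the ``numerator'' $ax\overline d+b\overline x\overline c$ that appears when multiplying $E(x)$ into a Vahlen matrix, stay in $\V^n$; and the fact (noted after Definition~\ref{defcliffmoeb}) that $ad^*-bc^*$ is multiplicative keeps us inside $\SL_+$. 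Granting these bookkeeping lemmas, the combinatorial skeleton of the argument is identical to the one in \cite{abelianizationpaper}, so I would present the proof as: (i) recall Cohn's reduction scheme; (ii) invoke \Cref{simulVn} and the multiplicativity of $ad^*-bc^*$ to see that all words stay in $\GL(\Gamma_n(\Z))$ and all $E(\cdot)$-arguments stay in $\V^n(\Z)$; (iii) observe that the norm bound $1<|a|<2$, i.e. $|a|^2\in\{2,3\}$, exhausts the short vectors needing an extra relation, exactly as in the ring and quaternion cases; (iv) deduce the $\E_2$ version by specialising $\D_2$ to $\DE_2$ and \eqref{R3} to \eqref{R3'}, using \Cref{description DE2} and \Cref{structure_U_Gamma_n_Z} for the relations of $\DE_2(\Gamma_n(\Z))$. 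I do not expect to need to redo Cohn's induction in detail; the write-up should be a careful ``the proof of \cite[Proposition 3.1]{abelianizationpaper} applies, with the following changes'' argument.
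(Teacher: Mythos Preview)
Your overall plan is correct and matches the paper's: both argue that the proof of \cite[Proposition~3.1]{abelianizationpaper} carries over once one checks that the Clifford-specific manipulations stay inside $\V^n$. The difference is one of emphasis and precision. The paper does \emph{not} run through a list of bookkeeping items via \Cref{simulVn} or invoke division-with-remainder; instead it singles out a \emph{single} auxiliary lemma as the only new ingredient, namely that for $z,a\in\V^n(\R)$ with $z\neq 0$ and $1<|a|=\sqrt{m}$,
\[
|z-a|<1 \quad\Longleftrightarrow\quad \Bigl|z^{-1}-\tfrac{1}{m-1}\,\overline{a}\Bigr|<\tfrac{1}{m-1},
\]
and then observes that this statement is well-posed in the Clifford setting because $z^{-1}$, $\tfrac{1}{m-1}\overline{a}$ and their difference all remain in $\V^n(\R)$. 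Once that lemma is available, the paper simply says ``now everything works as in \cite{abelianizationpaper}''. So your proposal would produce a correct but more diffuse write-up; if you want to match the paper, replace your steps (ii)--(iii) by an explicit statement and verification of this norm-inversion lemma, and drop the appeals to \cite[Theorem~3]{Dennis} and to general division-with-remainder, which are not what drive Cohn's argument here. For the $\E_2$ part the paper likewise just cites \cite[Theorem~2.14]{abelianizationpaper} and notes that the proof survives even though $\Gamma_n(\Z)$ is not a ring; your specialisation via \eqref{R3'} and $\DE_2$ is the same in content.
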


\begin{proof}
For $\O$ an order in $K$ or $H$, \Cref{clifford cohn} is exactly \cite[Proposition 3.1]{abelianizationpaper}. To prove \cite[Proposition 3.1]{abelianizationpaper}, one important auxiliary lemma is necessary. In the Clifford context, this lemma states that for $z, a \in \V^n(\R)$, $z \not= 0$ 
\begin{equation} |z-a| < 1 \quad \mbox{ if and only if }\quad \left|z^{-1}-\frac{1}{m-1} \overline{a}\right| < \frac{1}{m-1},\label{eq:norm_lemma clifford}\end{equation} where $1<|a| = \sqrt{m}$. Note that if $0\not=z \in \V^n(\R)$, then also $z^{-1} \in \V^n(\R)$. Also if $a \in \V^n(\R)$, then also $\frac{1}{m-1}\overline{a} \in \V^n(\R)$. Moreover the sum (or difference) of two elements in $\V^n(\R)$ is contained in $\V^n(\R)$. Hence the norm in \eqref{eq:norm_lemma clifford} is well defined. 
Now everything works as in \cite{abelianizationpaper}.

Concerning the second part of the defining relations of $\E_2(\O)$, for $\O$ an order in $K$ or $H$, this is proven in \cite[Theorem 2.14]{abelianizationpaper}. Although, $\Gamma_n(\Z)$ is not a ring, the proof stays valid.
\end{proof}

Next, we show the link between the groups $\E_2(\Gamma_n(\Z))$ and $\SL_+(\Gamma_n(\Z))$ for small values of $n$.
\begin{proposition}\label{E2=SL+}
For $1 \leq n \leq 4$,  $\E_2(\Gamma_n(\Z)) \cong \SL_+(\Gamma_n(\Z))$. 
\end{proposition}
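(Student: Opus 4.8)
The plan is to prove the isomorphism $\E_2(\Gamma_n(\Z)) \cong \SL_+(\Gamma_n(\Z))$ for $1 \le n \le 4$ by checking that, in each of these low-dimensional cases, the generated elementary group already exhausts the full higher modular group. Since the inclusion $\E_2(\Gamma_n(\Z)) \subseteq \SL_+(\Gamma_n(\Z))$ is immediate (every $E(x)$ with $x \in \V^n(\Z)$ lies in $\SL_+(\Gamma_n(\Z))$, as $x\cdot 0^* - 1\cdot(-1)^* = 1$), the content is the reverse inclusion. For $n=1$ and $n=2$ this is classical: by \Cref{isomE2normalE2Clifford} we have $\E_2(\Gamma_1(\Z)) \cong \E_2(\Z) \cong \SL_2(\Z)$ and $\E_2(\Gamma_2(\Z)) \cong \E_2(\Z[i]) \cong \SL_2(\Z[i])$, while $\SL_+(\Gamma_1(\Z)) \cong \SL_2(\Z)$ and $\SL_+(\Gamma_2(\Z)) \cong \SL_2(\Z[i])$ as recalled in the introduction; since $\Z$ and $\Z[i]$ are $\GE_2$-rings (Euclidean, by \Cref{EuclideanGE2}) and $\DE_2$ accounts for the unit/diagonal part, equality follows. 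So the real work is $n=3$ and $n=4$.

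For $n = 3$ and $n = 4$ I would run a ``division algorithm'' argument mimicking the classical proof that a Euclidean ring is a $\GE_2$-ring. Given $\sm{a & b \\ c & d} \in \SL_+(\Gamma_n(\Z))$, the goal is to reduce it by left-multiplication by elements of $\E_2(\Gamma_n(\Z))$ to a diagonal matrix, and then to observe that the remaining diagonal matrix lies in $\DE_2(\Gamma_n(\Z)) \subseteq \E_2(\Gamma_n(\Z))$. The reduction step uses that the entry $c$ (say) can be made smaller in Euclidean norm: one multiplies on the left by $E(x) = \sm{x & 1 \\ -1 & 0}$ for a suitable $x \in \V^n(\Z)$, which replaces the bottom row $(c,d)$ by a row whose first entry is controlled; the key is that $\Cliff_3(\Z)$ and $\Cliff_4(\Z)$, or more precisely the relevant lattices, carry a Euclidean-like structure. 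Concretely, for $n=3$ the group $\SL_+(\Gamma_3(\Z))$ is (via the isomorphism mentioned in the paper's first remark) the group of \cite{MacWatWie}, for which a presentation and the fact that it is generated by elementary matrices is known; for $n=4$, $\SL_+(\Gamma_4(\Z))$ is a finite-index subgroup of $\SL_2(\LL)$ with $\LL$ the Lipschitz quaternions, and $\LL$ is right Euclidean for the square norm by \cite{Fitz}, so the same division-algorithm reduction applies using \Cref{simulVn} to keep all intermediate entries inside $\V^n(\Z) \cup \{0\}$ and $\Gamma_n(\Z)$.

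An alternative, and probably cleaner, route is to invoke \Cref{clifford cohn}: it gives a presentation of $\GE_2(\Gamma_n(\Z))$ by the universal relations plus the relations \eqref{relation alpha} for $a \in \V^n(\Z)$ with $1 < |a| < 2$. For $n \le 4$ one checks that the only vectors $a \in \V^n(\Z)$ with $1 < |a|^2 < 4$ have $|a|^2 \in \{2,3\}$, and that the corresponding relations, together with the universal relations, are consequences of relations already holding in $\E_2$ (i.e. they involve no genuine diagonal unit not already in $\DE_2$). Combined with \eqref{theorem difference GE2 and E2}, which gives $\GE_2(\Gamma_n(\Z))/\E_2(\Gamma_n(\Z)) \cong \U(\Gamma_n(\Z))^{ab}$ suitably interpreted, plus the fact (to be verified case by case from \Cref{structure_U_Gamma_n_Z}) that for $n \le 4$ every diagonal matrix $[\mu,\pm(\mu^*)^{-1}]$ already lies in $\E_2(\Gamma_n(\Z))$ via \eqref{R2}, one concludes $\GE_2(\Gamma_n(\Z)) = \E_2(\Gamma_n(\Z))$. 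It then remains to show $\GE_2(\Gamma_n(\Z)) = \SL_+(\Gamma_n(\Z))$, i.e. that these small higher modular groups are ``$\GE_2$-rings'' in the Clifford sense; this is where one genuinely uses the Euclidean property of the underlying lattice ($\Z$, $\Z[i]$, and the quaternion orders for $n=3,4$) through \Cref{EuclideanGE2} and \cite{Fitz}.

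\textbf{Main obstacle.} The hard part is the step $\GE_2(\Gamma_n(\Z)) = \SL_+(\Gamma_n(\Z))$ for $n = 3$ and especially $n = 4$: one must show every matrix in $\SL_+(\Gamma_n(\Z))$ is a product of elementary matrices, and the subtlety is that $\Gamma_n(\Z)$ is not a ring, so the classical ``Euclidean $\Rightarrow$ $\GE_2$'' argument does not apply verbatim — one has to carefully track that the division-algorithm reductions stay inside $\Gamma_n(\Z) \cup \{0\}$ and that the off-diagonal entries one subtracts lie in $\V^n(\Z)$, which is exactly what \Cref{simulVn} is designed to guarantee. For $n=4$ one additionally has to reconcile the ``finite-index subgroup of $\SL_2(\LL)$'' description with the elementary generators, making sure the index-related discrepancy does not obstruct generation; I expect this to require an explicit identification of $\SL_+(\Gamma_4(\Z))$ with a concrete subgroup of $\SL_2(\LL)$ and a direct Euclidean reduction there.
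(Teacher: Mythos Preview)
Your main approach — run a Euclidean-style division algorithm directly inside $\SL_+(\Gamma_n(\Z))$, using \Cref{simulVn} to keep the quotients in $\V^n(\Z)$ — is exactly what the paper does. The cases $n\le 3$ go through because the covering radius of the lattice $\V^n(\Z)\cong\Z^n$ is strictly less than $1$, so for $a,b\in\Gamma_n(\Z)$ with $ab^*\in\V^n(\Z)$ one finds $q\in\V^n(\Z)$ with $|a-bq|<|b|$ and the reduction terminates.

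There is, however, a genuine gap in your treatment of $n=4$. First, your claim that $\LL$ is right Euclidean for the square norm is false: the Euclidean order in $\qa{-1}{-1}{\Q}$ is the Hurwitz order $\O_2$, not the Lipschitz order $\LL$ (see \cite{Fitz}; also \Cref{OL_is_GE2} and the paragraph after it). So the detour through $\SL_2(\LL)$ does not give you a Euclidean algorithm for free, and in any case it introduces exactly the index-$4$ discrepancy you worry about. The paper avoids this entirely by working directly in $\V^4(\Z)\cong\Z^4$. There the covering radius equals $1$ (the deep hole is $\tfrac{1}{2}(1,1,1,1)$), so the best one gets is $|a-bq|\le|b|$, and you must rule out equality. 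The paper does this with a short determinant argument: equality forces $b^{-1}a=z+\tfrac{1+i_1+i_2+i_3}{2}$ with $z\in\V^4(\Z)$, and then the condition $ad^*-bc^*=1$ gives $|b|\cdot N=1$ for some $N\in\N$, contradicting that $b^{-1}a$ is a genuine half-integer point. This step is the actual content for $n=4$, and it is missing from your plan.

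Your ``alternative route'' via \Cref{clifford cohn} does not work: that proposition gives a presentation of $\GE_2(\Gamma_n(\Z))$ by generators and relations, but it says nothing about whether $\GE_2(\Gamma_n(\Z))$ exhausts $\SL_+(\Gamma_n(\Z))$. Knowing the relations among the $E(x)$'s cannot, by itself, tell you that an arbitrary element of $\SL_+(\Gamma_n(\Z))$ is a product of such matrices. The step $\GE_2=\SL_+$ is precisely the Euclidean reduction above, so this route collapses back into the first one.
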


\begin{proof}
Assume that $1 \leq n \leq 4$. By \Cref{defECliff}, the matrices generating $\E_2(\Gamma_n(\Z))$ have all entries in $\V^n(\Z)$. As $\Z$-module, $\V^n(\Z)$ is exactly $\Z^n$ and the norm on $\V^n(\Z)$ equals the Euclidean norm on $\Z^n$. This norm defines a kind of right Euclidean structure on $\Gamma_n(\Z)$ for $n \leq 4$. Indeed if $a, b \in \Gamma_n(\Z)$ such that $ab^{*} \in \V^n(\Z)$, then there exists $q \in \V^n(\Z)$ such that $\vert a - bq \vert \leq \vert b \vert$, where the inequality is strict for $n \leq 3$. To prove this, for $z \in \V^n(\R)$ define $\lceil z \rceil$ to be an element $z_0 \in \V^n(\Z)$ that minimizes $\vert z - z_0 \vert$. By identifying $\V^n(\Z)$ with $\Z^n$, a lattice in $\R^n$, one sees that $\vert z - \lceil z \rceil \vert \leq 1$ for every $z \in \V^n(\R)$ if $n \leq 4$ and the inequality is strict for $n \leq 3$. Let $a, b \in \Gamma_n(\Z)$ such that $ab^{*} \in \V^n(\Z)$. By \Cref{simulVn}, $a^{-1}b \in \V^n(\R)$ and thus also $b^{-1}a \in \V^n(\R)$ and if we denote $\lceil b^{-1}a \rceil$ by $q$, we have $\vert b^{-1}a - q \vert \leq 1$ or equivalently $\vert a - bq \vert \leq \vert b \vert$ and the inequality is strict for $n \leq 3$. This proofs the claim from above.

We can now proceed in the folklore way of the proof of \Cref{EuclideanGE2}. Let $\sm{a & b \\ c & d}$ be a matrix in $\SL_+(\Gamma_n(\Z))$. If $b = 0$, then \Cref{EuclideanGE2},
$$\begin{pmatrix}
a & 0\\ c & d\end{pmatrix} = [a,d]E(0)^3E(d^{-1}c).$$
By definition, $ad^{*}=1$ and hence $a, d^{*} \in \U(\Gamma_n(\Z))$ and thus, as in the proof of \Cref{description DE2}, the matrix $[a,d]$ is a product of generators of $\DE_2(\Gamma_n(\Z))$. The latter are  products of generators of $\E_2(\Gamma_n(\Z))$. Also by definition  $cd^{*} \in \V^n(\Z)$ and hence by \Cref{simulVn}, $c^{-1}d \in \V^n(\Q)$ and also $d^{-1}c \in \V^n(\Q)$. As $d$ is a unit, $d^{-1}c \in \V^n(\Z)$ and hence $E(d^{-1}c)$ is a generator of $\E_2(\Gamma_n(\Z))$.

Suppose now that $b \neq 0$ and suppose first that $n \leq 3$. Moreover, without loss of generality, we may suppose that $\vert b \vert \leq \vert a \vert$ (otherwise multiply the matrix by $E(0)$). By definition $ab^{*} \in \V^n(\Z)$ and thus, by the discussion above, there exists $q$ such that $\vert a- bq \vert < \vert b \vert$. Thus the matrix
\begin{equation*}
 \sm{a & b \\ c & d}E(0)^3E(-q)E(0)
 \end{equation*}
has $(-b, a-bq)$ as first row and hence the norm of the upper right entry is smaller than $b$. As the norm function only takes values in $\N$, this process ends up  in a matrix where the upper right entry is $0$.  Finally the first step allows to conclude. 

If $n=4$, it is sufficient to prove that the case $\vert a- bq \vert = \vert b \vert$ cannot occur. This is equivalent to $\vert b^{-1}a - \lceil b^{-1}a \rceil \vert = 1$. The latter means that $b^{-1}a$ is of the form $z + \frac{1+i_1+i_2+i_3}{2}$ for $z \in \V^n(\Z)$. Hence $a = b \cdot (z + \frac{1+i_1+i_2+i_3}{2})$. As the matrix has a kind of determinant, which is equal to $1$ (see \Cref{defcliffmoeb}), we get
$$b \cdot \left(\left(z + \frac{1+i_1+i_2+i_3}{2}\right)d^{*} - c^{*}\right) = 1.$$
Taking the norm on both sides, we get $\vert b \vert N =1$, where $N \in \N$. This contradicts the fact that $b^{-1}a$ is of the form $z + \frac{1+i_1+i_2+i_3}{2}$. This finishes the proof. 
\end{proof}

In \Cref{isomE2normalE2Clifford}, we stated that $\E_2(\Gamma_n(\Z))$ is isomorphic to the group $\E_2(R)$ if $\Gamma_n(\Z)$ is a commutative ring. For $n=4$, although $\Gamma_4(\Z)$ is not a ring, we get that $\E_2(\Gamma_4(\Z))$, and hence also $\SL_+(\Gamma_4(\Z))$, is isomorphic to the group $\E_2(\LL)$, where $\LL$ is the ring of Lipschitz quaternions. Recall that if $D=\left( \frac{-1, -1}{\Q} \right)$ is the ring of the ``standard'' quaternions over the rationals with standard basis $1, i, j, k$, then $\LL = \Z + \Z i + \Z j + \Z k \subseteq D$ is the ring of Lipschitz quaternions. 

\begin{corollary}\label{n3quat}
The group $\E_2(\Gamma_4(\Z))$ is isomorphic to $\E_2(\LL)$.
\end{corollary}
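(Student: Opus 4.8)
The plan is to combine the Euclidean description of $\SL_+(\Gamma_4(\Z))$ obtained in \Cref{E2=SL+} with the quaternionic picture already in place. By \Cref{E2=SL+}, $\E_2(\Gamma_4(\Z)) \cong \SL_+(\Gamma_4(\Z))$, and by \Cref{isocliffquat} there is an isomorphism $\SL_+(\Gamma_4(\R)) \cong \SL_2\left(\qa{-1}{-1}{\R}\right)$. The first step is to check that, under this isomorphism, the integral subgroup $\SL_+(\Gamma_4(\Z))$ corresponds to a subgroup of $\SL_2\left(\qa{-1}{-1}{\R}\right)$ whose entries lie in (a $\Z$-span identified with) the Lipschitz quaternions $\LL$; concretely one traces through the construction in \cite[Section 6]{ElsGrunMen} to see that the monoid $\Gamma_4(\Z) \cup \{0\}$ of entries is carried into $\LL$, so that $\SL_+(\Gamma_4(\Z))$ maps into $\SL_2(\LL)$, i.e. into the elements of reduced norm $1$ over $\LL$.

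Next I would show that this image is in fact $\E_2(\LL)$. One inclusion is immediate: the generators $E(x)$ of $\E_2(\Gamma_4(\Z))$ with $x \in \V^4(\Z)$ are sent to elementary matrices $E(x)$ with $x$ a (pure-plus-scalar) Lipschitz quaternion, and more generally, chasing the isomorphism, every elementary matrix $E(\lambda)$ with $\lambda \in \LL$ arises this way since $\V^4(\Z)$ spans $\LL$ over $\Z$ after the identification; hence the image contains $\E_2(\LL)$. For the reverse inclusion, note that $\LL$ is a right Euclidean ring for the (square) norm — this is exactly one of the cases in \cite{Fitz} mentioned in \Cref{matquat}, $\qa{-1}{-1}{\Q}$ — so by the folklore argument used in the proof of \Cref{EuclideanGE2} (and reproduced for the Clifford setting inside \Cref{E2=SL+}), $\SL_2(\LL) = \E_2(\LL)$. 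Composing, the isomorphism $\SL_+(\Gamma_4(\Z)) \xrightarrow{\sim} \SL_2(\LL) = \E_2(\LL)$ restricts to the desired isomorphism $\E_2(\Gamma_4(\Z)) \cong \E_2(\LL)$.

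The one genuinely delicate point is the bookkeeping in the first step: making sure that the isomorphism of \Cref{isocliffquat} really does match the integral structures, i.e. that $\Gamma_4(\Z)$ (together with $0$) goes precisely to $\LL$ and not to some other order or $\Z$-lattice in $\qa{-1}{-1}{\Q}$, and that the reduced-norm-one condition on the matrix level is preserved (this uses \eqref{deltared}, identifying the Dieudonné determinant with the reduced norm, and the multiplicativity of $\sm{a&b\\c&d}\mapsto ad^*-bc^*$ noted after \Cref{defcliffmoeb}). Everything else — the Euclidean division argument, the generation statements — is routine and already contained, mutatis mutandis, in the proof of \Cref{E2=SL+}. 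Finally, the isomorphism $\E_2(\Gamma_4(\Z)) \cong \E_2(\LL)$ is also compatible with the remark in \Cref{isomE2normalE2Clifford} that $\E_2(\Gamma_n(\Z))$ agrees with the ring-theoretic $\E_2$ whenever $\Gamma_n(\Z)$ is commutative, now extended to the first non-commutative case $n=4$.
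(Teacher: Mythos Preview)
Your approach has a genuine gap, resting on two false claims.

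First, $\LL$ is \emph{not} right Euclidean. The Euclidean order in $\qa{-1}{-1}{\Q}$ singled out in \cite{Fitz} is the \emph{Hurwitz} order $\O_2$ (the unique maximal order), not the Lipschitz order $\LL$. Indeed, for $b^{-1}a = \tfrac{1+i+j+k}{2}$ the nearest element of $\LL$ is at distance exactly $1$, so the Euclidean inequality fails. (This is precisely the borderline case handled separately in the proof of \Cref{E2=SL+}; the paper later proves in \Cref{OL_is_GE2} that $\LL$ is nevertheless a $\GE_2$-ring, but not via Euclideanity.)

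Second, and more fatally, even granting that $\LL$ is a $\GE_2$-ring, one does \emph{not} get $\SL_2(\LL)=\E_2(\LL)$. By \eqref{theorem difference GE2 and E2} one has $\GE_2(\LL)/\E_2(\LL)\cong \U(\LL)^{ab}\cong Q_8^{ab}\cong C_2\times C_2$, and since $\GL_2(\LL)=\SL_2(\LL)$ (\Cref{Dieudonneremark}) this gives $[\SL_2(\LL):\E_2(\LL)]=4$; see also \Cref{finite index amalgamation in SL2L}. So your chain $\SL_+(\Gamma_4(\Z))\hookrightarrow \SL_2(\LL)=\E_2(\LL)$ collapses: the last equality is false, and the upper bound you obtain is too coarse by a factor of $4$. (Relatedly, the integral bookkeeping you flag as ``delicate'' is genuinely nontrivial: under the isomorphism of \Cref{isocliffquat} the image of $\SL_+(\Gamma_4(\Z))$ is an index-$4$ subgroup of $\SL_2(\LL)$, not all of it.)

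The paper avoids all of this by a purely algebraic route: it simply compares, generator by generator and relation by relation, the presentations of $\E_2(\Gamma_4(\Z))$ and $\E_2(\LL)$ furnished by \Cref{clifford cohn}. The correspondence $i_1\mapsto i$, $i_2\mapsto j$, $i_3\mapsto k$ matches the generators $E(x)$ and the universal relations; the extra relations \eqref{relation alpha} match because the elements of norm $\sqrt{2}$ and $\sqrt{3}$ correspond; and $\DE_2(\LL)\cong Q_8\times C_2\cong \DE_2(\Gamma_4(\Z))$ compatibly. No appeal to \Cref{isocliffquat} or to $\SL_2(\LL)$ is needed.
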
  

\begin{proof}
It suffices to compare for both groups the presentations stated in  \Cref{clifford cohn}.
Clearly the correspondence $i_1 \mapsto i$, $i_2 \mapsto j$ and $i_3 \mapsto k$ determines a natural one-to-one correspondence between the generators of the groups $\E_2(\Gamma_n(\Z))$ and $\E_2(\LL)$ and this agrees in both groups with the relations \eqref{R1}--\eqref{R4}, 
The only relations which still need to be checked in order to have an isomorphism between the two groups are the relations defined by $\DE_2(\Gamma_4(\Z))$ and $\DE_2(\LL)$ and the relations \eqref{relation alpha} for both groups.
For the latter relations, it is sufficient to note that for $\Gamma_4(\Z)$ and $\LL$, the elements of norm $\sqrt{2}$ and $\sqrt{3}$ are also in correspondence. Finally, it is readily seen that $\DE_2(\LL) \cong Q_8 \times C_2 \cong \DE_2(\Gamma_4(\Z))$ and these isomorphisms are compatible with the correspondence stated above. 
\end{proof}

\Cref{n3quat} might seem confusing as $\Gamma_3(\Z)$ is isomorphic to $\LL$. However on the matrix level, one has to go up to $\Gamma_4(\Z)$ to find an isomorphism with the standard quaternions. This same behaviour has been spotted in \Cref{isocliffquat}.

\section{A finite presentation of $\GE_2$ and $\E_2$ over $\Gamma_n(\Z)$}\label{finite presentation clifford}

In the following we will describe a finite presentation for $\GE_2(\Gamma_n(\Z))$ and $\E_2(\Gamma_n(\Z))$. Recall that $\mathcal{B} = \lbrace \pm i_h \mid 0 \leq h \leq n-1 \rbrace$(see \eqref{defB}).

Let us come back to the relations \eqref{R1}-\eqref{R4}. From the relations \eqref{R1} to \eqref{R4} and the relations in the group $\D_2(R)$ it follows that the involution $E(0)^2=D(-1)$ is central in $\GE_2(R)$ and we denote it by $-I$. Moreover, \eqref{R1} is equivalent to
\begin{align}
E(x+y)=E(x)E(0)^{-1}E(y),\qquad x,y \in \V^n(\Z).\tag{R1'}\label{R1'}
\end{align}

The inverse of $E(x)$ (with $x \in \V^n(\Z)$) is given by the formula \begin{equation}E(x)^{-1} = E(0) E(-x) E(0),\tag{R5} \label{inv} \end{equation} which follows from \Cref{R1}.

We first prove that the group $\GE_2(\Gamma_n(\Z))$ is finitely generated. 

\begin{lemma}\label{finitelymanygen}
The group $\GE_2(\Gamma_n(\Z))$ is generated by $E(x)$ for $x \in \mathcal{B} \cup \lbrace 0 \rbrace$ and $\left[\mu, \pm \mu\right]$, for $\mu \in \mathcal{B}$. In particular, $\GE_2(\Gamma_n(\Z))$ is finitely generated. 
\end{lemma}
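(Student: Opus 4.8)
The statement claims that $\GE_2(\Gamma_n(\Z))$ is generated by the finitely many elements $E(x)$ with $x\in\mathcal{B}\cup\{0\}$ and $[\mu,\pm\mu]$ with $\mu\in\mathcal{B}$. By \Cref{diagonalmatricGL}, $\GE_2(\Gamma_n(\Z))$ is generated by all $E(x)$ with $x\in\V^n(\Z)$ together with all diagonal matrices $[\mu,\pm(\mu^*)^{-1}]$ with $\mu\in\U(\Gamma_n(\Z))$. So it suffices to show that each of these two families of generators lies in the subgroup $H$ generated by the proposed finite set. First I would handle the $E(x)$'s: an arbitrary $x\in\V^n(\Z)$ is an integral combination $x=\sum_{h=0}^{n-1} a_h i_h$ with $a_h\in\Z$, i.e. a $\Z$-linear combination of the elements $i_h\in\mathcal{B}$. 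Using the additivity relation \eqref{R1'}, namely $E(x+y)=E(x)E(0)^{-1}E(y)$, and the inverse formula \eqref{inv}, one sees inductively that $E(x)$ lies in the subgroup generated by $E(0)$ and the $E(i_h)$, since $E(m\,i_h)$ for $m\in\Z$ is built from $E(i_h)$, $E(-i_h)=E(0)E(i_h)E(0)$ and $E(0)$ via repeated application of \eqref{R1'}. Hence every $E(x)$ with $x\in\V^n(\Z)$ is a word in $E(x)$, $x\in\mathcal{B}\cup\{0\}$.

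Next I would deal with the diagonal part. Recall from \Cref{structure_U_Gamma_n_Z} that $\U(\Gamma_n(\Z))=\langle -1,i_1,\ldots,i_{n-1}\rangle$ is generated (as a group) by the elements of $\mathcal{B}$. Given a diagonal generator $[\mu,\nu]$ of $\GE_2(\Gamma_n(\Z))$ with $\mu\nu^*\in\R\setminus\{0\}$, \Cref{diagonalmatricGL} already tells us $\nu=\pm(\mu^*)^{-1}$, so $[\mu,\nu]=[\mu,\pm(\mu^*)^{-1}]$. Writing $\mu=g_1g_2\cdots g_r$ with each $g_t\in\mathcal{B}$, the map $\varphi\colon a\mapsto[a,(a^*)^{-1}]$ from the proof of \Cref{description DE2} is a homomorphism, so $[\mu,(\mu^*)^{-1}]=\prod_{t=1}^r[g_t,(g_t^*)^{-1}]=\prod_{t=1}^r[g_t,g_t^{-1}]$ (using $g_t^*=g_t$ for $g_t\in\mathcal{B}\subseteq\V^n(\Z)$, hence $(g_t^*)^{-1}=g_t^{-1}$, and $g_t^{-1}=\pm g_t$ since $i_h^{-1}=-i_h$ and $(-1)^{-1}=-1$). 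Thus each $[\mu,(\mu^*)^{-1}]$ is a product of elements $[g,g^{-1}]=[g,\pm g]$ with $g\in\mathcal{B}$. To get the sign flip $[\mu,-(\mu^*)^{-1}]$ as well, I would multiply by $D(-1)=[-1,-1]$, which is among the proposed generators (it is $[\mu,\pm\mu]$ with $\mu=-1\in\mathcal{B}$), or equivalently note $E(0)^2=D(-1)$ so it is already available. Therefore all diagonal generators lie in $H$.

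Combining the two paragraphs, $H$ contains every generator appearing in \eqref{diagonalmatricGL}, so $H=\GE_2(\Gamma_n(\Z))$, and since $\mathcal{B}$ is finite the group is finitely generated. I do not expect a genuine obstacle here: the argument is essentially bookkeeping with the relations \eqref{R1'}, \eqref{inv} and the homomorphism $\varphi$ of \Cref{description DE2}, together with the description of $\U(\Gamma_n(\Z))$ in \Cref{structure_U_Gamma_n_Z}. The only point requiring a little care is making sure the sign ambiguity in $[\mu,\pm(\mu^*)^{-1}]$ is absorbed correctly, which is why $D(-1)$ (equivalently $E(0)^2$) must be noted to be in the generated subgroup; this is immediate from \eqref{R4}.
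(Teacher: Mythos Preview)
Your proposal is correct and follows essentially the same approach as the paper: both use \eqref{R1'} to reduce $E(x)$ for arbitrary $x\in\V^n(\Z)$ to words in $E(0)$ and the $E(i_h)$, and both reduce the diagonal generators $[\mu,\pm(\mu^*)^{-1}]$ to products of $[i_h,\pm i_h^{-1}]$ by writing $\mu$ as a product of the $i_h$. You simply spell out in more detail what the paper summarizes as ``it is not hard to see,'' in particular invoking the homomorphism $\varphi$ from \Cref{description DE2} and handling the sign via $D(-1)=E(0)^2$.
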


\begin{proof}
For non-negative integers $a_x$, \eqref{R1'} yields that 
$$E(\sum\limits_{x \in \mathcal{B}} a_{x} x)= E(0)\prod_{x \in \mathcal{B}} E(0)^{-1}E(a_{x} x) ,$$
Using \eqref{R1}, we get that $E(a_xx) \in \langle E(0),E(x) \rangle$ for every $x \in \mathcal{B}$. Hence, $E(\sum\limits_{x \in \mathcal{B}} a_{x} x) \in \langle E(0), E(x) \mid x \in \mathcal{B} \rangle$. 

Now we show that it is enough to consider the matrices $\left[\mu, \pm \mu\right]$, for $\mu \in \mathcal{B}$. By \eqref{diagonalmatricGL}, we know that the generators of $\GE_2(\Gamma_n(\Z))$ are all of the form $\left[\mu, \pm (\mu^*)^{-1} \right]$ for some $\mu \in \U(\Gamma_n(\Z))$. 
It is not hard to see that every matrix $[\mu, \pm (\mu^*)^{-1} ]$ is a product of matrices of the form $[i_h,\pm (i_h)^{-1}]$.
\end{proof}

\begin{remark} Note that in fact it is enough to consider $E(x)$ for $x \in \lbrace  i_h \mid 0 \leq h \leq n-1 \rbrace$ as generators, because $E(-x)$ can be described by $E(x)$ and \eqref{R1}. However, in a first instance, for the purpose of the relations, it is easier to keep these generators. 
\end{remark}

We now reduce the relations.

\begin{lemma}\label{R1forfinitegen}
The relation \eqref{R1} for $x, y \in \V^n(\Z)$ is equivalent to the  relations\begin{align}
E(x)E(0)^{-1}E(y) & = E(y)E(0)^{-1}E(x),  \tag{RC1} \label{CR1} \\
E(x)E(0)^{-1}E(-x) & = E(0),  \tag{RC1$\frac{1}{2}$} \label{CR1.5}
\end{align}
for $x,y \in \mathcal{B}\cup \lbrace 0 \rbrace$.
\end{lemma}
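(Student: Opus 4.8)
The plan is to prove the equivalence by showing each direction. First I would note the easy direction: assuming \eqref{R1} holds for all $x,y\in\V^n(\Z)$, the relation \eqref{CR1.5} is immediate by taking $y=-x$, since $E(x)E(0)E(-x)=E(0)^2E(x+(-x))=E(0)^2E(0)$, and then cancelling one $E(0)$ on the right (or rather, using the rewritten form \eqref{R1'}: $E(0)=E(x+(-x))=E(x)E(0)^{-1}E(-x)$). Similarly \eqref{CR1} follows from \eqref{R1'} because $E(x)E(0)^{-1}E(y)=E(x+y)=E(y+x)=E(y)E(0)^{-1}E(x)$, using commutativity of addition in $\V^n(\Z)$. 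So the restriction of \eqref{R1} to $x,y\in\mathcal{B}\cup\{0\}$ already yields both \eqref{CR1} and \eqref{CR1.5} for those $x,y$.

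The substantive direction is the converse: assuming \eqref{CR1} and \eqref{CR1.5} for all $x,y\in\mathcal{B}\cup\{0\}$ (together with the ambient group structure), derive \eqref{R1} (equivalently \eqref{R1'}) for arbitrary $x,y\in\V^n(\Z)$. The strategy is an induction on the ``size'' of $x$ and $y$, writing a general element of $\V^n(\Z)\cong\Z^n$ as a $\Z$-linear combination of the basis vectors $i_0,\dots,i_{n-1}$, i.e. as a signed sum of elements of $\mathcal{B}$. I would first establish the key lemma that the map $x\mapsto E(x)E(0)^{-1}$ is, in an appropriate sense, ``additive'': more precisely, set $\tilde{E}(x)=E(x)E(0)^{-1}$ and show $\tilde{E}(x+y)=\tilde{E}(x)\tilde{E}(y)$. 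Relation \eqref{CR1} says $\tilde E(x)\tilde E(y)=\tilde E(y)\tilde E(x)$ for $x,y\in\mathcal B\cup\{0\}$, so the subgroup generated by $\{\tilde E(x):x\in\mathcal B\}$ is abelian; relation \eqref{CR1.5} says $\tilde E(x)\tilde E(-x)=E(0)\cdot E(0)^{-1}$... wait, one must be careful: $\tilde E(-x)=E(-x)E(0)^{-1}$, and \eqref{CR1.5} reads $E(x)E(0)^{-1}E(-x)=E(0)$, i.e. $\tilde E(x)E(-x)=E(0)$, i.e. $\tilde E(x)\tilde E(-x)=E(0)E(0)^{-1}=1$, so $\tilde E(-x)=\tilde E(x)^{-1}$. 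Thus on the abelian group $A=\langle\tilde E(x):x\in\mathcal B\rangle$ the assignment $\mathcal B\to A$, $x\mapsto\tilde E(x)$ sends $-x$ to the inverse, and one checks it extends to a homomorphism $\V^n(\Z)\to A$: since $\V^n(\Z)$ is free abelian on $i_0,\dots,i_{n-1}$, define $\Phi(\sum a_h i_h)=\prod\tilde E(i_h)^{a_h}$ (product in the abelian group $A$, well-defined by commutativity), and then one must verify that this $\Phi$ agrees with $x\mapsto\tilde E(x)$, i.e. that $\tilde E(\sum a_h i_h)=\prod\tilde E(i_h)^{a_h}$. This last identity is exactly what needs an inductive argument peeling off one generator at a time, using \eqref{CR1} to move terms around and \eqref{CR1.5} (in the form $\tilde E(-x)=\tilde E(x)^{-1}$) to handle negative coefficients. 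Granting $\tilde E(x+y)=\tilde E(x)\tilde E(y)$ for all $x,y\in\V^n(\Z)$ is precisely \eqref{R1'}, hence \eqref{R1}.

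The main obstacle I anticipate is the bookkeeping in the inductive step: one only has the commutation and inversion relations for the finitely many basis elements of $\mathcal{B}$, and must bootstrap these to arbitrary integer combinations. In particular one has to be careful about what ``$E(a_x x)$'' means for $x\in\mathcal B$ and integer $a_x$ — it is defined via \eqref{R1} as in the proof of \Cref{finitelymanygen}, so $\tilde E(a_x x)=\tilde E(x)^{a_x}$ essentially by definition/one-variable induction using only $x$ and $0$ — and then the cross terms between different basis elements are controlled by \eqref{CR1}. A clean way to organise this is: (i) one-variable claim $\tilde E(kx)=\tilde E(x)^k$ for $x\in\mathcal B$, $k\in\Z$, from \eqref{CR1} with $y=x$ repeatedly plus \eqref{CR1.5}; (ii) the abelianity of $A$ from \eqref{CR1}; (iii) assemble a general $x=\sum a_h i_h$ by induction on $\sum|a_h|$, at each step using \eqref{CR1} to split off one $\tilde E(i_h)^{\pm1}$ and reduce. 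Then \eqref{R1} is recovered. I would also remark that \eqref{CR1.5} for $y=-x$ with $x\in\mathcal B$ already encodes \eqref{inv}-type information, and that the case $x=0$ or $y=0$ of \eqref{R1} is trivial, so the genuinely new content is entirely in the cross terms, which is why \eqref{CR1} over $\mathcal B$ suffices.
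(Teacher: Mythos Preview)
Your approach is correct and essentially the same as the paper's: both show the substantive direction by reducing a general element of $\V^n(\Z)$ to a product of $\tilde E(i_h)^{\pm 1}$'s via the fixed decomposition from \Cref{finitelymanygen}, then using \eqref{CR1} to reorder and \eqref{CR1.5} to cancel opposite pairs so that both sides of \eqref{R1'} collapse to the same canonical word. The only cosmetic difference is that you package this as a homomorphism $\Phi\colon \V^n(\Z)\to A$ while the paper works with the symmetric reformulation $q+r=s+t\Rightarrow E(q)E(0)^{-1}E(r)=E(s)E(0)^{-1}E(t)$; one minor slip is your appeal to ``\eqref{CR1} with $y=x$'' in step~(i), which is vacuous --- that step is really definitional (from the expression for $E(kx)$ in \Cref{finitelymanygen}) together with \eqref{CR1.5} for negative~$k$.
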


\begin{proof}
We first notice that \eqref{CR1.5} is exactly relation \eqref{R1} for $y=-x$. Furthermore \eqref{CR1} follows trivially from \eqref{R1'} as both the left and right hand side equal $E(x+y)$.
Next we show that \eqref{R1} for all $x,y \in \V^n(\Z)$ follows from \eqref{CR1} and \eqref{CR1.5} for $x,y \in \mathcal{B} \cup \lbrace 0 \rbrace$. For this notice that \eqref{R1} or \eqref{R1'} are equivalent to the relation
\begin{equation}\label{R1eq2}
\forall\ q,r,s,t \in \V^n(\Z) \qquad \textrm{ if } q+r=s+t \textrm{ then } E(q)E(0)^{-1}E(r)=E(s)E(0)^{-1}E(t).
\end{equation} 
Indeed one implication is trivial. For the converse, set $q=x$, $r=y$, $s=x+y$ and $t=0$. 

Thus we need to prove \eqref{R1eq2}. To do so, let $q,r,s,t \in \V^n(\Z)$ with $q+r=s+t$. Fix an order on $\mathcal{B}$. Write $q=\sum\limits_{x \in \mathcal{B}} q_{x} x$, $q_x \in \mathbb{Z}_{\geq 0}$. Then, in terms of generators, $E(q)$ can be written as
$$E(q)=E(0)\prod_{x \in \mathcal{B}} E(0)^{-1}e_{x},$$
where
\begin{equation*}
e_{x} = \underbrace{E(x)E(0)^{-1}E(x)\ldots E(0)^{-1}E(x)}_{q_{x} \textrm{ times}}
\end{equation*}

Similarly, we can split up $E(r), E(s)$ and $E(t)$ and by \eqref{CR1}, we can regroup the $E(x)$'s, $x \in \mathcal{B}$, for a fixed order of $\mathcal{B}$.
If $q+r=\sum_{x \in \mathcal{B}} c_x x =s+t$, then this decomposition is obtained from rearranging the decompositions of $q$ and $r$ and cancelling out those coefficients corresponding to elements of $\mathcal{B}$ with opposing sign.
Of course, the same holds for $s$ and $t$.
We can do the same on the level of matrices $E(x)$ via \eqref{CR1} for rearranging and via \eqref{CR1.5} for cancelling. Thus, using those two relations \eqref{CR1} and \eqref{CR1.5}, we obtain
$$E(q)E(0)^{-1}E(r)=
E(0)\prod_{x \in \mathcal{B}}E(0)^{-1}f_x
=E(s)E(0)^{-1}E(t),$$
where
$$f_x = \underbrace{E(x)E(0)^{-1}E(x)\ldots E(0)^{-1}E(x)}_{c_x \textrm{ times}}. $$
\end{proof}

\begin{lemma}\label{R3forfinitegen}
The relation \eqref{R3} for $x \in \V^n(\Z)$ and $\mu, \nu \in \U(\Gamma_n(\Z))$ is equivalent to \eqref{R3} for $x \in \mathcal{B}\cup \lbrace 0 \rbrace$ and $\mu \in \mathcal{B}$.
\end{lemma}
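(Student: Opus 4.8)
The reverse implication holds because the listed instances form a sub-collection of \eqref{R3}; the content is the converse. Working in $\GE_2(\Gamma_n(\Z))$ and freely using the remaining universal relations (above all \eqref{R1'}), we enlarge the range of the relation in two steps: first to all $x\in\V^n(\Z)$ while keeping $\mu\in\mathcal B$, and then to all admissible pairs $\mu,\nu\in\U(\Gamma_n(\Z))$ (those with $\mu\nu^*\in\R\setminus\{0\}$) and all $x$. In all the expressions that occur below, the argument $\nu^{-1}x\mu$ lies in $\V^n(\Z)$ by the observation recorded just before \eqref{R3'}, so every $E(\,\cdot\,)$ is a legitimate generator.

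\emph{Step 1.} Fix $\mu\in\mathcal B$, so that $\mu^*=\mu$ and the admissible partners are exactly $\nu=\pm\mu^{-1}$; fix one such $\nu$. The $x=0$ instance of \eqref{R3} reads $[\nu,\mu]=E(0)[\mu,\nu]E(0)^{-1}$, whence $E(0)^{-1}[\nu,\mu]=[\mu,\nu]E(0)^{-1}$. If \eqref{R3} holds for $x$ and for $y$ with this $(\mu,\nu)$, then
\begin{align*}
E(x+y)[\mu,\nu] &= E(x)E(0)^{-1}E(y)[\mu,\nu] = E(x)E(0)^{-1}[\nu,\mu]E(\nu^{-1}y\mu)\\
&= E(x)[\mu,\nu]E(0)^{-1}E(\nu^{-1}y\mu) = [\nu,\mu]E(\nu^{-1}x\mu)E(0)^{-1}E(\nu^{-1}y\mu)\\
&= [\nu,\mu]\,E\bigl(\nu^{-1}(x+y)\mu\bigr),
\end{align*}
so \eqref{R3} holds for $x+y$ too. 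Since every $x\in\V^n(\Z)$ can be written as $\sum_{b\in\mathcal B}a_b\,b$ with $a_b\in\Z_{\ge 0}$, induction on $\sum_b a_b$ — the base cases $x=0$ and $x\in\mathcal B$ being the hypotheses — yields \eqref{R3} for all $x\in\V^n(\Z)$ and all $\mu\in\mathcal B$.

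\emph{Step 2.} Let $\mu,\nu\in\U(\Gamma_n(\Z))$ be admissible and put $d=[\mu,\nu]\in\GE_2(\Gamma_n(\Z))$. By the proof of \Cref{finitelymanygen}, $d=d_1d_2\cdots d_k$ with $d_j=[\mu_j,\nu_j]$, $\mu_j=i_{h_j}\in\mathcal B$ and $\nu_j=\pm i_{h_j}^{-1}$; thus $\mu=\mu_1\cdots\mu_k$ and $\nu=\nu_1\cdots\nu_k$. Fix $x\in\V^n(\Z)$, set $x_0=x$ and $x_j=\nu_j^{-1}x_{j-1}\mu_j\in\V^n(\Z)$ (each $d_j$ being an admissible diagonal matrix). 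Pushing $E(x)$ through $d_1,\dots,d_k$ one factor at a time, invoking at the $j$-th step the case of \eqref{R3} with argument $x_{j-1}\in\V^n(\Z)$ and $\mu_j\in\mathcal B$ supplied by Step 1, we get
\begin{align*}
E(x)\,d &= [\nu_1,\mu_1][\nu_2,\mu_2]\cdots[\nu_k,\mu_k]\,E(x_k)\\
&= \bigl[\nu_1\cdots\nu_k,\ \mu_1\cdots\mu_k\bigr]\,E\bigl(\nu_k^{-1}\cdots\nu_1^{-1}\,x\,\mu_1\cdots\mu_k\bigr) = [\nu,\mu]\,E(\nu^{-1}x\mu),
\end{align*}
which is precisely \eqref{R3} for $x$ and $(\mu,\nu)$. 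This completes the proof.

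The argument is essentially bookkeeping; the two places where care is needed are the commutation identity $E(0)^{-1}[\nu,\mu]=[\mu,\nu]E(0)^{-1}$ used in Step 1 — where one must keep track of the transposition of the diagonal entries forced by conjugation with $E(0)$ — and the claim in Step 2 that $\nu_j^{-1}x_{j-1}\mu_j$ remains in $\V^n(\Z)$ at every stage, which one reads off from \Cref{simulVn} using that the $d_j$ are genuine diagonal matrices in $\GE_2(\Gamma_n(\Z))$.
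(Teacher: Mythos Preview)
Your proof is correct and follows essentially the same approach as the paper's: first extend \eqref{R3} from $x\in\mathcal B\cup\{0\}$ to all $x\in\V^n(\Z)$ by induction via \eqref{R1'}, then factor the diagonal matrix $[\mu,\nu]$ into generators $[i_{h_j},\pm i_{h_j}^{-1}]$ to reduce arbitrary admissible $(\mu,\nu)$ to $\mu\in\mathcal B$. The only cosmetic difference is that the paper rewrites \eqref{R3} as a conjugation identity \eqref{R3''} and handles the $E(0)$-commutation by inserting $E(0)^2$ and using its centrality, whereas you invoke the $x=0$ instance directly to obtain $E(0)^{-1}[\nu,\mu]=[\mu,\nu]E(0)^{-1}$; both are equivalent bookkeeping.
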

\begin{proof}
First note that \eqref{R3} is equivalent to
\begin{equation}
\left[\nu,\mu\right]^{-1}E(x)\left[\mu,\nu\right] = E(\nu^{-1}x\mu),  \qquad   x \in \V^n(\Z), \; \mu \in \U(\Gamma_n(\Z)). \tag{R3''}\label{R3''} 
\end{equation}
Suppose that \eqref{R3''} is true for a fixed $x,y \in \V^n(\Z)$ and arbitrary $\mu, \nu \in \U(\Gamma_n(\Z))$. Then, by \eqref{R1} (and thus \eqref{R1'}) and the fact that $E(0)^2$ is central and of order $2$,
\begin{eqnarray*}
\left[\nu,\mu\right]^{-1}E(x+y)\left[\mu,\nu\right] & = & 
\left[\nu,\mu\right]^{-1}E(x)E(0)^{-1}E(y)\left[\mu,\nu\right]\\
& = & E(0)^2  \left[\nu,\mu\right]^{-1}E(x)\left[\mu,\nu\right]\left[\mu,\nu\right]^{-1}E(0)\left[\nu,\mu\right]\left[\nu,\mu\right]^{-1}E(y)\left[\mu,\nu\right]\\
& = & E(0)^{2}E(\nu^{-1}x\mu)E(\mu^{-1}0\nu)E(\nu^{-1}y\mu)\\
& = & E(\nu^{-1}x\mu)E(0)^{-1}E(\nu^{-1}y\mu)\\
& = & E(\nu^{-1}x\mu+\nu^{-1}y\mu)\\
& = & E(\nu^{-1}(x+y)\mu)
\end{eqnarray*}
By induction the relation \eqref{R3} for $x \in \V^n(\Z)$ and arbitrary $\mu, \nu \in \U(\Gamma_n(\Z))$ is equivalent to \eqref{R3} for $x \in \mathcal{B}$ and arbitrary $\mu, \nu \in \U(\Gamma_n(\Z))$.

The only thing missing is to show that it is enough to consider \eqref{R3} for $\mu \in \mathcal{B}$. Suppose \eqref{R3} for $i_h \in \mathcal{B}$ and we want to prove \eqref{R3} for general $\mu \in \U(\Gamma_n(\Z))$. By \eqref{diagonalmatricGL}, a matrix $\left[\mu, \nu \right] \in \GE_2(\Gamma_n(\Z))$ is of the form $[\mu, \pm (\mu^*)^{-1}]$ with $\mu \in \U(\Gamma_n(\Z))$. Write $[\mu, \pm (\mu^*)^{-1}] = [i_{h_1}, \pm (i_{h_1})^{-1}] \ldots [i_{h_m}, (i_{h_m})^{-1}]$, where $\mu = i_{h_1}\cdots i_{h_m}$ for some $m \in \N$ and some choice of $\pm$ in the first generator, depending on $m$. Then
\begin{align*}
E(x)[\mu, \pm (\mu^*)^{-1}] & = E(x)[i_{h_1}, \pm (i_{h_1})^{-1}] \ldots [i_{h_m}, (i_{h_m})^{-1}]\\
& = [\pm i_{h_1}^{-1}, i_{h_1}]E(\pm i_{h_1}xi_{h_1})\ldots [i_{h_m},(i_{h_m})^{-1}]\\
& = [\pm i_{h_1}^{-1}, i_{h_1}] \ldots [i_{h_m}^{-1},i_{h_m}]E(\pm i_{h_m} \ldots i_{h_1}xi_{h_1} \ldots i_{h_m})\\
& = [\pm (\mu^*)^{-1}, \mu]E(\pm \mu^* x \mu). \qedhere
\end{align*} 
\end{proof}

The next theorem follows immediately from the previous lemmas. It actually states that the universal relations for $\GE_2(\Gamma_n(\Z))$ may be reduced to the universal relations for the generating set $\mathcal{B}$. Moreover the same is true for the group $\E_2(\Gamma_n(\Z))$.

\begin{theorem}\label{gen and rel for GECliff}\label{gen and rel for E2Cliff}
The group $\GE_2(\Gamma_n(\Z))$ (respectively $\E_2(\Gamma_n(\Z))$) is generated by $E(x)$, with $x \in \mathcal{B}  \cup \lbrace 0 \rbrace,$ and $[\mu, \pm \mu]$ (respectively $D(\mu)$) for $\mu \in\mathcal{B}$. \\
A complete set of defining relations for $\GE_2(\Gamma_n(\Z))$ is given by
\begin{align}
E(x)E(0)^{-1}E(y) & = E(y)E(0)^{-1}E(x), & & x,y \in \mathcal{B}\cup \lbrace 0 \rbrace, \label{RC1}\tag{RC1} \\
E(x)E(0)^{-1}E(-x) & = E(0), & & x \in \mathcal{B}\cup \lbrace 0 \rbrace, \label{RC1.5theorem}\tag{RC1$\frac{1}{2}$} \\
E(\mu)E(\mu^{-1})E(\mu) & = E(0)^2D(\mu), & & \mu \in \mathcal{B}, \label{RC2}\tag{RC2} \\
E(x)[\mu, \pm \mu^{-1}] & = [\pm \mu^{-1}, \mu] E(\pm \mu x\mu),  & &  x \in \mathcal{B} \cup \lbrace 0 \rbrace, \; \mu \in \mathcal{B},\label{RC3} \tag{RC3}\\
E(0)^2 & = D(-1),\label{RC4}\tag{RC4}\\
\textrm{defining relations } & \textrm{in } \D_2(\Gamma_n(\Z)), & & \label{RC6}\tag{RC5}\ \\
(E(\overline{a}) E(a))^m & = E(0)^{2} & & a \in \V^n(\Z) \mbox{ such that } 1 < |a| = \sqrt{m} < 2. \tag{RC6}\label{RC7}
\intertext{The group $\E_2(\Gamma_n(\Z))$ is defined by the same relations, except that \eqref{RC3} and \eqref{RC6} are replaced by}
E(x)D(\mu)&=D(\mu^{-1})E(\mu x \mu), & &  x \in \mathcal{B}\cup \lbrace 0 \rbrace, \mu \in \mathcal{B} \label{RC3'}\tag{RC3'}\\
\textrm{Defining relations } & \textrm{in } \DE_2(\Gamma_n(\Z)). & & \label{RC6'}\tag{RC5'}
\end{align}
In particular $\GE_2(\Gamma_n(\Z))$ and $\E_2(\Gamma_n(\Z))$ are finitely presented.
\end{theorem}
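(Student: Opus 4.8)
The plan is to assemble the statement from \Cref{clifford cohn} together with \Cref{finitelymanygen,R1forfinitegen,R3forfinitegen} by a short sequence of Tietze transformations. By \Cref{clifford cohn}, the group $\GE_2(\Gamma_n(\Z))$ is presented by the generators $E(x)$ with $x\in\V^n(\Z)$ and $[\mu,\nu]$ with $\mu,\nu\in\U(\Gamma_n(\Z))$ and $\mu\nu^*\in\R\setminus\{0\}$, subject to the universal relations \eqref{R1}--\eqref{R4} (quantified as in the itemised list following \Cref{structure_U_Gamma_n_Z}), the defining relations of the subgroup $\D_2(\Gamma_n(\Z))$, and the norm relations \eqref{relation alpha}; for $\E_2(\Gamma_n(\Z))$ one uses the analogous presentation, with \eqref{R3} replaced by \eqref{R3'} and $\D_2(\Gamma_n(\Z))$ replaced by $\DE_2(\Gamma_n(\Z))$. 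Next, \Cref{finitelymanygen} tells us that every $E(x)$ with $x=\sum_{h=0}^{n-1}a_h i_h\in\V^n(\Z)$ is, by repeated use of \eqref{R1'}, an explicit word in $\{\,E(x_0)\mid x_0\in\mathcal{B}\cup\{0\}\,\}$, and that every diagonal generator $[\mu,\nu]$ is an explicit word in $\{\,[\mu_0,\pm\mu_0]\mid\mu_0\in\mathcal{B}\,\}$ (respectively that every $D(\mu)$ is an explicit word in the $D(\mu_0)$, $\mu_0\in\mathcal{B}$). Adjoining these expressions as relations and then using them to delete the redundant generators brings us to a presentation on the finite generating set named in the theorem, and it remains only to check that, after the substitution, the old relations are consequences of \eqref{RC1}--\eqref{RC7}.

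This last check is precisely the content of the preceding lemmas. \Cref{R1forfinitegen} shows that \eqref{R1} for all $x,y\in\V^n(\Z)$ is equivalent, modulo the generator expressions above, to \eqref{RC1} and \eqref{RC1.5theorem} for $x,y\in\mathcal{B}\cup\{0\}$. The relation \eqref{R2} already involves only $\mu\in\V^n(\Z)\cap\U(\Gamma_n(\Z))=\mathcal{B}$, so it is exactly \eqref{RC2}. \Cref{R3forfinitegen} shows that \eqref{R3} reduces to the case $x\in\mathcal{B}\cup\{0\}$, $\mu\in\mathcal{B}$, that is, to \eqref{RC3}; the very same computation, run in $\E_2(\Gamma_n(\Z))$, specialises \eqref{R3'} to \eqref{RC3'}. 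The relation \eqref{R4} is retained verbatim as \eqref{RC4}. The defining relations of $\D_2(\Gamma_n(\Z))$ (respectively $\DE_2(\Gamma_n(\Z))$) are already relations among the finite generators $[\mu,\pm\mu]$, $\mu\in\mathcal{B}$ (respectively $D(\mu)$, $\mu\in\mathcal{B}$); since $\U(\Gamma_n(\Z))$ is a finite group of order $2^n$ (see \Cref{structure_U_Gamma_n_Z}), so are $\D_2(\Gamma_n(\Z))$ and $\DE_2(\Gamma_n(\Z))$ — the latter even isomorphic to $\U(\Gamma_n(\Z))$ by \Cref{description DE2} — and a finite group is finitely presented, which supplies the finitely many relations \eqref{RC6}, respectively \eqref{RC6'}. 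Finally, the norm relations \eqref{relation alpha} concern only the vectors $a\in\V^n(\Z)$ with $|a|^2=m$ a positive integer and $1<\sqrt m<2$, hence with $m\in\{2,3\}$; identifying $\V^n(\Z)$ with the lattice $\Z^n$, there are only finitely many such $a$, and for each of them both $E(a)$ and $E(\overline a)$ are words in the finite generators by \Cref{finitelymanygen}, so \eqref{relation alpha} contributes the finite family \eqref{RC7}.

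Putting these steps together, the presentation of \Cref{clifford cohn} is Tietze-equivalent to the one displayed in the theorem, and the latter involves only finitely many generators and finitely many relations; hence both $\GE_2(\Gamma_n(\Z))$ and $\E_2(\Gamma_n(\Z))$ are finitely presented. The argument is conceptually immediate — indeed, ``the next theorem follows immediately from the previous lemmas'', as the sentence preceding the statement asserts — and I expect the only points demanding care to be of a bookkeeping nature: that substituting the \Cref{finitelymanygen} expressions into a word which is trivial by \eqref{R1}--\eqref{R4} again yields a word which is trivial by \eqref{RC1}--\eqref{RC7}, so that no relation is lost in the reduction (this is exactly what \Cref{R1forfinitegen,R3forfinitegen} guarantee), together with the finiteness of the set of norm-$\sqrt 2$ and norm-$\sqrt 3$ vectors in $\V^n(\Z)$ and of the groups $\D_2(\Gamma_n(\Z))$ and $\DE_2(\Gamma_n(\Z))$.
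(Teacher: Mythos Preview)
Your proposal is correct and follows precisely the approach intended in the paper: the theorem is assembled from \Cref{clifford cohn} and \Cref{finitelymanygen,R1forfinitegen,R3forfinitegen} by reducing the infinite families of generators and relations to the finite ones indexed by $\mathcal{B}\cup\{0\}$, exactly as the sentence preceding the statement indicates. Your explicit framing in terms of Tietze transformations and your verification that \eqref{RC6}, \eqref{RC6'} and \eqref{RC7} contribute only finitely many relations make the ``follows immediately'' rigorous.
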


\begin{remark}\label{finitelypresentedgeneralR} \Cref{gen and rel for GECliff} stays valid for a special class of unital rings $R$ (almost universal rings, see \cite[Definition 2.13]{abelianizationpaper} for more details) which are finitely generated and free as a $\Z$-module with basis $\mathcal{B}$, with $x,y \in \mathcal{B}$ and $\mu, \nu \in \U(R)$.  Moreover a complete set of relations for $\GE_2(R)$ is given by the universal relations and a set $\Phi$ of relations solely defined in $\E_2(R)$.
If both $\U(R)$ and the set $\Phi$ are finite, then $\GE_2(R)$ and $\E_2(R)$ are finitely presented. 
\end{remark}

Although there are several elements in $\V^n(\Z)$ with norms between $1$ and $2$, the next theorem shows that nevertheless the universal relations are enough to give a presentation of $\GE_2(\Gamma_n(\Z))$.

\begin{theorem}\label{universalcomplete}
The universal relations give a complete set of relations for $\GE_2(\Gamma_n(\Z))$ and $\E_2(\Gamma_n(\Z))$.
\end{theorem}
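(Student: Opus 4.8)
The plan is to invoke \Cref{clifford cohn}, which presents $\GE_2(\Gamma_n(\Z))$ by the universal relations together with one instance of relation \eqref{relation alpha} for each $a\in\V^n(\Z)$ with $1<|a|=\sqrt m<2$, and $\E_2(\Gamma_n(\Z))$ by \eqref{R1}, \eqref{R2}, \eqref{R3'}, \eqref{R4}, the relations of $\DE_2(\Gamma_n(\Z))$ and that same relation \eqref{relation alpha}. Hence it suffices to show that every such relation $(E(\overline a)E(a))^m=E(0)^2$ is already a consequence of the universal relations, and in fact of their common ``$\E_2$-part'', so that both assertions follow at once. Fix such an $a=\sum_{h=0}^{n-1}a_h i_h$ with $a_h\in\Z$. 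Then $m=|a|^2=\sum_h a_h^2$ is an integer with $1<m<4$, so $m\in\{2,3\}$; moreover $t:=a_0$ satisfies $t^2\le m$, hence $t\in\{0,\pm1\}$, and $a+\overline a=2t$, $a\overline a=m$, so $a^2=2ta-m$. In particular $\Z[a]=\Z+\Z a$ is a commutative subring of $\Cliff_n(\Z)$ contained in $\V^n(\Z)$, isomorphic to $\Z[x]/(x^2-2tx+m)$, i.e.\ to $\Z[i]$ if $m-t^2=1$, to $\Z[\sqrt{-2}]$ if $m-t^2=2$, and to $\Z[\sqrt{-3}]$ if $(m,t)=(3,0)$.

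Next I would cut down to finitely many $a$. Conjugating by the diagonal matrices $D(\mu)$, $\mu\in\mathcal B$, sends $E(x)\mapsto E(0)^2E(\mu x\mu)$ by \eqref{R3'} and \eqref{R4}, which allows one to change the signs of the coordinates of $a$ in enough ways; together with the permutation automorphisms $i_h\mapsto i_{\sigma(h)}$ of $\Cliff_n(\Z)$ (which preserve the universal relations) and the harmless substitutions $a\mapsto-a$, $a\mapsto\overline a$, this reduces the verification of \eqref{relation alpha} to the representatives $1+i_1$, $i_1+i_2$, $1+i_1+i_2$ and $i_1+i_2+i_3$ (those meaningful for the given $n$: for $n=1$ there is nothing to do, for $n=2$ only $1+i_1=1+i$ occurs, etc.). For the first three, \eqref{relation alpha} involves only the generators $E(x)$ with $x\in\Z[a]$, and it is one of the relations Cohn proved redundant over the Euclidean rings $\Z[i]$ and $\Z[\sqrt{-2}]$, which are universal for $\GE_2$ (see \cite{Cohn2}); since in $\E_2(\Gamma_n(\Z))$ these generators satisfy precisely the defining relations of $\E_2(\Z[a])$ — all instances of \eqref{R1}, \eqref{R2}, \eqref{R3'}, \eqref{R4} and of the (essentially trivial) $\DE_2(\Z[a])$-relations — the subgroup they generate is a quotient of $\E_2(\Z[a])$, and Cohn's derivation of \eqref{relation alpha} transfers verbatim.

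The remaining representative $a=i_1+i_2+i_3$, with $a^2=-3$ and $\overline a=-a$, is the genuinely new case, and I expect it to be the main obstacle: here one cannot stay inside the quadratic subring $\Z[a]\cong\Z[\sqrt{-3}]$ (that ring is not, on its own, universal for $\GE_2$), so the ambient Clifford structure must be used. Concretely, writing $E(\pm a)=E(\pm i_1)E(0)^{-1}E(\pm i_2)E(0)^{-1}E(\pm i_3)$ via \eqref{R1'}, one expands the word $(E(-a)E(a))^3$ in the generators $E(\pm i_h)$, $E(0)$ and $D(i_h)$ and collapses it to $E(0)^2$, using \eqref{CR1} to reorder the $E(\pm i_h)$ up to factors $E(0)^{\pm1}$, together with \eqref{CR1.5}, \eqref{R2} in the form $E(i_h)E(-i_h)E(i_h)=E(0)^2D(i_h)$, \eqref{R3'}, \eqref{R4}, and the relations of $\DE_2(\Gamma_n(\Z))$ (so that $D(i_h)^2=E(0)^2$); the anticommutation $i_hi_k=-i_ki_h$, visible through the $D(i_h)$-conjugations, is what makes it possible to peel the extra generator $E(i_3)$ off and fall back onto the quadratic identities already established. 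Having derived \eqref{relation alpha} from the universal relations in every case, \Cref{clifford cohn} shows that the universal relations already present $\GE_2(\Gamma_n(\Z))$, and — the derivation having used only relations common to the two presentations — also $\E_2(\Gamma_n(\Z))$.
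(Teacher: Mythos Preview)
Your overall plan --- derive each instance of \eqref{relation alpha} from the universal relations and then invoke \Cref{clifford cohn} --- is the right one, and your reduction to finitely many representatives via the $D(\mu)$-conjugations, $a\mapsto -a$, $a\mapsto\overline a$ and the permutation automorphisms of the $i_h$ is fine. The gap is in the step where you dispose of the representatives $i_1+i_2$ and $1+i_1+i_2$ by appealing to Cohn: you assert that $\Z[\sqrt{-2}]$ is universal for $\GE_2$, but it is not. As the paper itself recalls in the appendix (citing \cite{Cohn2}), among the Euclidean imaginary quadratic orders only $\mathcal I_1=\Z[i]$ and $\mathcal I_3$ are universal for $\GE_2$; the ring $\mathcal I_2=\Z[\sqrt{-2}]$ contains elements of norm $\sqrt 2$ and $\sqrt 3$, so it is not discretely normed, and the extra relation $(E(\overline a)E(a))^m=E(0)^2$ is \emph{not} deducible from the universal relations of $\E_2(\Z[\sqrt{-2}])$ alone. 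Hence your ``transfer from $\E_2(\Z[a])$'' argument collapses for $a=i_1+i_2$ and $a=1+i_1+i_2$ just as it does for $a=i_1+i_2+i_3$: in all three cases one must genuinely use the ambient Clifford structure.

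The paper's proof does exactly that. It treats every norm-$2$ element uniformly as $a=\gamma+\delta$ with $\gamma,\delta\in\mathcal B$, $\gamma\neq\pm\delta$, and derives $(E(\overline{\gamma+\delta})E(\gamma+\delta))^2=-I$ by a direct chain of manipulations using \eqref{R1'}, \eqref{RC3'}, \eqref{RC6'}, \eqref{RC2} and the key identity
\[
E(x)E(\alpha)E(y)=E(x-\alpha^{-1})D(\alpha)E(y-\alpha^{-1}),\qquad x,y\in\V^n(\Z),\ \alpha\in\mathcal B,
\]
which follows from the universal relations. The anticommutation $\gamma\delta=-\delta\gamma$ (equivalently $\delta\gamma\delta=\gamma$) is what makes the computation close up; this is precisely the Clifford input that is unavailable inside the commutative subring $\Z[a]$. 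The norm-$3$ case is handled by an analogous, longer direct computation. So the mechanism you correctly identified as necessary for $i_1+i_2+i_3$ is in fact needed already for $i_1+i_2$; your shortcut via $\Z[\sqrt{-2}]$ does not exist.
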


\begin{proof}
We only need to prove that the relations~(\ref{RC7}) can be deduced from the universal relations. Then \Cref{clifford cohn} allows to conclude. 

Analogously to \cite[(2.9)]{Cohn1}, one can derive the following crucial formula from the universal relations.
\begin{equation}\label{2_9'}
 E(x)E(\alpha)E(y) \; = \; E(x - \alpha^{-1})D(\alpha)E(y - \alpha^{-1}), \qquad x, y \in \V^n(\Z), \alpha \in \mathcal{B}. 
\end{equation}

We first consider the elements $a \in \V^n(\Z)$ appearing in $E(a)$ and that are of norm $2$. They are necessarily of the form $\gamma + \delta$ with $\gamma, \delta \in \mathcal{B} = \lbrace  \pm i_h \mid 0 \leq h \leq n-1 \rbrace$ and $\gamma \neq \pm \delta$. We can assume, without loss of generality, that $\gamma \neq \pm 1$ (as $\gamma \neq \pm \delta$) and thus $\gamma^2 = -1$.
Furthermore, $\delta \gamma \delta = \gamma$, $\overline{\gamma} = \gamma^{-1} = -\gamma$ and $\overline{\delta} = \delta^{-1}$. In what follows, the central element $E(0)^2$ will frequently be replaced by a minus sign in front of the product.
Consider $\left( E(\overline{\gamma+\delta})E(\gamma + \delta) \right) ^2$. 

\begin{eqnarray*}
 \left(E(\overline{\gamma+\delta})E(\gamma + \delta) \right)^2 & = & \left(E(\overline{\gamma}+\overline{\delta})E(\gamma + \delta)\right) ^2\\
& \stackrel{\eqref{RC1}}{=} & \left(E(\overline{\gamma})E(0)E(\overline{\delta})E(\gamma)E(0)E(\delta)\right)^2 \\ 
& \stackrel{\eqref{2_9'}}{=}& \left( E(\gamma^{-1}) E(-\delta)D(\delta^{-1})E(\gamma - \delta)E
(0)E(\delta)\right)^2 \\
& \stackrel{\eqref{RC1}}{=}& \left( E(\gamma^{-1})E(-\delta)D(\delta^{-1})E(\gamma)\right)^2\\ 
& \stackrel{\eqref{RC3'}}{=}&\left(D(\delta^{-1}) E(\delta \gamma^{-1} \delta )E(-\delta^{-1})E(\gamma)\right)^2 \\
& \stackrel{\eqref{RC3'}}{=}&D(\delta^{-1})D(\delta) E(\gamma^{-1})E(-\delta)E(\delta^{-1}\gamma \delta^{-1}) E(\delta \gamma^{-1} \delta )E(-\delta^{-1})E(\gamma) \\
& \stackrel{\eqref{RC6'}}{=}& E(\gamma^{-1})E(-\delta)E(\gamma ) E(\gamma^{-1} )E(-\delta^{-1})E(\gamma) \\
&  \stackrel{\eqref{2_9'}}{=}& E(\gamma^{-1})E(-\delta-\gamma^{-1})D(\gamma ) E(0)E(-\delta^{-1})E(\gamma)\\
& \stackrel{\eqref{RC3'}}{=}& D(\gamma )E(\gamma^{-3})E(-\gamma\delta\gamma-\gamma) E(0)E(-\delta^{-1})E(\gamma)\\ 
& \stackrel{\eqref{RC1}}{=}& -D(\gamma )E(\gamma)E(-\gamma\delta\gamma-\gamma-\delta^{-1})E(\gamma).
\end{eqnarray*}
As $- \gamma \delta \gamma = \delta^{-1}$, the last right hand side simplifies to $$-D(\gamma )E(\gamma)E(-\gamma)E(\gamma) = -D(\gamma )E(\gamma)E(\gamma^{-1})E(\gamma) \stackrel{\eqref{RC2}}{=} D(\gamma)^2 = -I. $$
If $a$ has norm $3$, the computations are similar, but longer. The details can be found in \cite[Proof of Theorem 4.4.4]{Doryanthesis}.

The reductions in both cases were actually done completely within the group $\E_2(\Gamma_n(\Z))$, by which we mean that we used relations which are also true in $\E_2(\Gamma_n(\Z))$ and not just in $\GE_2(\Gamma_n(\Z))$. For example, we used \eqref{RC3'} instead of \eqref{RC3} and \eqref{RC6'} instead of \eqref{RC6}. This also implies that \eqref{RC7} may be dropped in the relations of $\E_2(\Gamma_n(\Z))$.
\end{proof}

\begin{remark}\label{dropRC6}
Note that the relations~(\ref{RC7}) were never used in the reduction processes in the proofs of \Cref{finitelymanygen} to \Cref{R3forfinitegen}. Thus the relations~(\ref{RC7}) may also be dropped in \Cref{gen and rel for GECliff}.
\end{remark}

The idea of the proof of \Cref{universalcomplete} is not unique to the case of Clifford algebras.
An important aspect of the proof is that the ``basis'' of $\Gamma_n(\Z)$ consists of units.
Hence, with some extra work, \Cref{universalcomplete} remains valid for quaternion orders containing a basis of units.
For example the bases of the Lipschitz and Hurwitz order $\LL$ and $\O_2$ in $\qa{-1}{-1}{\Q}$ and the maximal order $\O_3$ in $\qa{-1}{-3}{\Q}$ stated in \cite[(3.20)]{abelianizationpaper} consist of units. Thus by a similar, but more complicated, proof as above, one can extend \Cref{universalcomplete} to these orders.

However the universal relations do not always give a complete set of relations of $\GE_2(\O)$ for $\O$ an order in a quaternion algebra. A non-exotic example of this is given in \Cref{appendix}.

\section{The group $\E_2(\Gamma_n(\Z))$ is an amalgamated product}\label{mainsection}

In this section we prove that $\E_2(\Gamma_n(\Z))$ is a non-trivial amalgamated product for every $n \in \N$. This is well-known for $n=1$  and $n=2$. Indeed in \cite[I.4.2 Example c]{Serre}, Serre showed that $\E_2(\Gamma_1(\Z))=\SL_2(\Z)$ is a non-trivial free product with amalgamation and in \cite[Theorem 4.4.1]{FineBook}, it is shown that $\E_2(\Gamma_2(\Z)) \cong \SL_2(\Z[i])$ is a non-trivial free product with amalgamation. In this section, we will show the amalgamation for $n \geq 3$. The work is inspired by \cite[Section 4]{FineBook}.

\Cref{gen and rel for E2Cliff} and \Cref{universalcomplete} give a first presentation for $\E_2(\Gamma_n(\Z))$. This is summarized in the following lemma. 

\begin{lemma}\label{firstpresent}
For every $n \geq 1$, the group $\E_2(\Gamma_n(\Z))$ is generated by $E(x)$ for $x \in \lbrace 0, \pm i_h \mid 0 \leq h \leq n-1 \rbrace$ and $D(\mu)$ for $\mu \in  \lbrace  \pm i_{h} \mid 0 \leq h \leq n-1 \rbrace$ with the following relations.
\begin{enumerate}
\item $E(0)^4 = I,\ E(0)^2 \textrm{ central}$\label{(1)}
\item $E(i_h)E(0)E(-i_h) =E(0)^3$ for $ 0 \leq h \leq n-1 $\label{(2)}
\item $E(i_h)E(0)E(i_k)  = E(i_k)E(0)E(i_h)$ for $0 \leq h < k \leq n-1$\label{(3)}
\item $D(1)  = E(0)^2E(1)^3$\\
$D(-1)  = E(0)^2E(-1)^3$\\
$D(i_h)  = E(0)^2E(i_h)E(-i_h)E(i_h)$ for $1 \leq h \leq n-1$\label{(4)}
\item $E(0)D(-1)  = D(-1)E(0)$ \\
$E(0)D(i_h)  = D(-i_h)E(0)$ for $1 \leq h \leq n-1$ \\
$E(1)D(-1)  = D(-1)E(1)$ \\
$E(1)D(i_h) = D(-i_h)E(-1)$ for $1 \leq h \leq n-1$\\
$E(i_h)D(-1) = D(-1)E(i_h)$  for $1 \leq h \leq n-1$ \\
$E(i_h)D(i_h)  = D(-i_h)E(-i_h)$ for $1 \leq h \leq n-1$ \\
$E(i_h)D(i_k)  = D(-i_k)E(i_h)$ for $1 \leq h \neq k \leq n-1$ \label{(5)}
\item 
Defining relations given by $\DE_2(\Gamma_n(\Z)) \cong \U(\Gamma_n(\Z))$.\label{(6)}
\end{enumerate}
\end{lemma}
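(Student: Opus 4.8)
The plan is to obtain this presentation by a routine rewriting of the one already established in \Cref{gen and rel for E2Cliff} and \Cref{universalcomplete}. By the $\E_2$-part of \Cref{gen and rel for E2Cliff}, $\E_2(\Gamma_n(\Z))$ is generated by the $E(x)$ with $x \in \mathcal{B} \cup \{0\}$ and the $D(\mu)$ with $\mu \in \mathcal{B}$, subject exactly to \eqref{RC1}, \eqref{RC1.5theorem}, \eqref{RC2}, \eqref{RC3'}, \eqref{RC4}, the defining relations of $\DE_2(\Gamma_n(\Z))$, and \eqref{RC7}; and \Cref{universalcomplete} (see also \Cref{dropRC6}) shows that \eqref{RC7} is redundant, so it may be dropped. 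Since $\mathcal{B} = \{\pm i_h : 0 \le h \le n-1\}$ with $i_0 = 1$, the generating set here is exactly the one in the statement, so all that is left is to check that this list of relators is Tietze-equivalent to the list $(1)$--$(6)$.

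I would carry this out block by block. From \eqref{RC4} ($E(0)^2 = D(-1)$) together with the relation $D(-1)^2 = I$ of $\DE_2(\Gamma_n(\Z)) \cong \U(\Gamma_n(\Z))$ (\Cref{description DE2}) one gets $E(0)^4 = I$, and centrality of $E(0)^2$ is then the standard consequence of the universal relations (cf.\ the discussion opening \Cref{finite presentation clifford}); this is item $(1)$, and it licenses the rewritings $E(0)^{-1} = E(0)^3$ and $E(0)^{-2} = E(0)^2$ used throughout. Item $(6)$ is verbatim the $\DE_2$-block. Feeding $E(0)^{-1} = E(0)^3$ into \eqref{RC1.5theorem} and rearranging turns it into $E(i_h)E(0)E(-i_h) = E(0)^3$ (the $x = 0$ case trivial, the $x = -i_h$ case equivalent to the $x = i_h$ one), which is item $(2)$; items $(1)$ and $(2)$ also return the inversion formula \eqref{inv}. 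The same rewriting turns \eqref{RC1} into $E(x)E(0)E(y) = E(y)E(0)E(x)$; the cases with an argument equal to $0$ are trivial, those with $x = \pm y$ reduce to \eqref{RC1.5theorem}, and the surviving cases $x = i_h$, $y = i_k$ with $h < k$ are item $(3)$, the remaining sign and order combinations following by conjugation with $E(0)$ and toggling $E(\pm i_h)$ via $(2)$. Item $(4)$ is \eqref{RC2} written out for $\mu \in \{1, -1, i_h\}$, using $i_h^{-1} = -i_h$ and $E(0)^{-2} = E(0)^2$, the instances $\mu = -i_h$ adding nothing once $D(-i_h) = D(i_h)^{-1}$ and $(5)$ are in hand. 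Item $(5)$ is \eqref{RC3'}, $E(x)D(\mu) = D(\mu^{-1})E(\mu x \mu)$, after computing the twisted arguments inside the explicitly known group $\U(\Gamma_n(\Z))$, namely $(\pm 1)x(\pm 1) = x$, $i_h\cdot 0\cdot i_h = 0$, $i_h\cdot 1\cdot i_h = -1$, $i_h\cdot i_h\cdot i_h = -i_h$ and $i_h\cdot i_k\cdot i_h = i_k$ for $k \ne h$, and restricting $\mu$ to $\{1, -1, i_h\}$ and $x$ to $\{0, 1, i_h\}$, the negative cases again following via $(2)$ and the $\DE_2$-relations. Conversely each of \eqref{RC1}, \eqref{RC1.5theorem}, \eqref{RC2}, \eqref{RC3'}, \eqref{RC4} is recovered from $(1)$--$(6)$ by running these steps backwards; in particular \eqref{RC4} need not appear explicitly among $(1)$--$(6)$, since with $D(1) = I$ (item $(6)$) item $(4)$ forces $E(1)^3 = E(0)^2$, whence, via $(1)$ and $(2)$, $E(-1)^3 = I$, so that item $(4)$ returns $D(-1) = E(0)^2$.

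I do not anticipate any genuine obstacle: the whole argument is mechanical. The only point demanding care is the bookkeeping in the last step of each block, namely verifying that \emph{every} instance of the Cohn-type relations \eqref{RC1}, \eqref{RC2}, \eqref{RC3'} over the full generating set $\mathcal{B} \cup \{0\}$ (all signs $\pm i_h$, the element $0$, and all $\mu \in \mathcal{B}$) really is a consequence of the few representatives retained in $(2)$--$(5)$, and that these reductions, together with the elimination of \eqref{RC4}, involve no circularity. This is a finite and routine check; everything else is immediate from \Cref{gen and rel for E2Cliff} and \Cref{universalcomplete}.
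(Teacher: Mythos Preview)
Your proposal is correct and follows essentially the same approach as the paper: both start from the presentation of \Cref{gen and rel for E2Cliff}, drop \eqref{RC7} via \Cref{universalcomplete}/\Cref{dropRC6}, and then verify by routine Tietze transformations that the relations $(1)$--$(6)$ are equivalent to the remaining universal relations, the main bookkeeping being that the omitted cases (negative $\mu$ and negative $x$) follow from the retained ones via the inversion formula from $(2)$ and the $\DE_2$-relations. Your write-up is more explicit than the paper's, but the strategy is identical.
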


\begin{proof}
It is straightforward that the relations \eqref{(1)} to \eqref{(6)} follow from the relations in \Cref{gen and rel for E2Cliff}. 

For the converse, note that the relations \eqref{(1)} to \eqref{(6)} are in fact just the relations from \Cref{gen and rel for E2Cliff}, but where we dropped the cases $\mu = - i_h$ in $D(i_h)$ for $1 \leq h \leq n-1$ and $x=-i_h$ in $E(x)$ for $1 \leq h \leq n-1$. The relations for those cases can be deduced from the other relations by using
$$E(x) = E(0)E(-x)^{-1}E(0),\quad x \in \lbrace i_h \mid 0 \leq h \leq n-1 \rbrace,$$
which is implied by \eqref{(2)} and
$$E(0)D(-i_h)=D(i_h)E(0), \quad  0 \leq h \leq n-1,$$
which is implied by \eqref{(5)} and \eqref{(6)}.
Moreover the relations involving the generator $D(1)$ may be dropped, as $D(1)$ is the neutral element of the group by \eqref{(6)}.

Finally by \Cref{dropRC6}, the relations \eqref{RC7} can be dropped anyway. 
\end{proof}

We reduce this list of relations and generators further in the following lemma. Remark that we technically do not need to add the element $-I$ as generator, but it will make some relations easier to write down.

\begin{lemma}\label{secondpresent}
Let $n \geq 1$. The group $\E_2(\Gamma_n(\Z))$ is generated by $-I, E(0), E(1), E(i_h), D(i_h)$ for $1 \leq h \leq n-1$ with the following relations.
\begin{enumerate}[(a)]
\item $(-I)^2= I, -I \textrm{ central}$\label{(a)}
\item $E(0)^2=D(i_h)^2=(D(i_h)E(0))^2=E(1)^3=-I \textrm{ and }(D(i_h)E(i_h))^3=I$ for $1 \leq h \leq n-1$\label{(b)}
\item
$E(1)E(0)D(i_1)E(1)D(i_1)E(0)=I$\\
$E(i_h)E(0)D(i_h)E(i_h)D(i_h)E(0)=I$ for $1 \leq h \leq n-1$\label{(c)}
\item $E(1)E(0)E(i_h)  = E(i_h)E(0)E(1)$ for $1 \leq h \leq n-1$\\
$E(i_h)E(0)E(i_k)  = E(i_k)E(0)E(i_h)$ for $1 \leq h < k \leq n-1$\label{(d)}
\item $D(i_1)E(1)D(i_1)=D(i_2)E(1)D(i_2)=\ldots =D(i_{n-1})E(1)D(i_{n-1})$\label{(e)}
\item $D(i_1)E(i_h)D(i_1)=\ldots = D(i_{h-1})E(i_h)D(i_{h-1})= D(i_{h+1})E(i_h)D(i_{h+1})= \ldots =$\\
$ D(i_{n-1})E(i_h)D(i_{n-1}) = E(i_h)$ for $1 \leq h \leq n-1$\label{(f)}
\item $D(i_h)D(i_k)D(i_h)=D(i_k)$ for $1 \leq h<k \leq n-1$\label{(g)}
\end{enumerate}
\end{lemma}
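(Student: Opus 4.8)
The plan is to derive the reduced presentation in \Cref{secondpresent} directly from the one in \Cref{firstpresent} by a two-way argument: first show that relations (a)--(g) are consequences of (1)--(6), then conversely show that (1)--(6) follow from (a)--(g). Since both presentations are presentations of the same group $\E_2(\Gamma_n(\Z))$ on (almost) the same generating set, the real content is a Tietze-transformation bookkeeping: we are discarding the redundant generators $E(-i_h)$ for $0 \le h \le n-1$ (including $E(-1)$) and rewriting everything in terms of $-I := E(0)^2$, $E(0)$, $E(1)$, $E(i_h)$ and $D(i_h)$ for $1 \le h \le n-1$.

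First I would fix the substitution dictionary. From \Cref{firstpresent}\eqref{(2)} we have $E(-i_h) = E(0)^{-1} E(i_h)^{-1} E(0)^{-1} = E(0)^{3} E(i_h)^{-1} E(0)^{3}$, and likewise $E(-1)$ is expressed through $E(1)$ and $E(0)$; from \eqref{(4)}, $D(1) = I$ (using \eqref{(6)}), $D(-1) = E(0)^2 = -I$, and $D(-i_h) = E(0)D(i_h)E(0)^{-1}$ via \eqref{(5)}. With these, every generator of the old presentation is a word in the new generators, so after adding $-I$ as a (redundant) generator with defining relation $-I = E(0)^2$, we may eliminate the old redundant generators by Tietze moves. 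Then I would go relation-by-relation: (a) is immediate from \eqref{(1)} once $-I = E(0)^2$; (b) collects $E(0)^2 = -I$ (from the definition), $E(1)^3 = -I$ (from $D(1)=I$ in \eqref{(4)}), $D(i_h)^2 = I$ and $(D(i_h)E(0))^2 = I$ (from the structure of $\DE_2(\Gamma_n(\Z)) \cong \U(\Gamma_n(\Z))$ in \eqref{(6)}, where $D(i_h)$ has order $2$ and $D(i_h)E(0)D(i_h) = D(-i_h)^{-1}E(0)$-type identities using \eqref{(5)}), and $(D(i_h)E(i_h))^3 = I$ (this is exactly \eqref{RC2}/\eqref{(4)} for $D(i_h)$ rewritten: $E(i_h)E(i_h^{-1})E(i_h) = E(0)^2 D(i_h)$ with $i_h^{-1} = -i_h$, combined with \eqref{(5)}); (c) are the relations \eqref{RC7} for elements of norm $\sqrt 2$ — but by \Cref{universalcomplete} these are consequences of the universal relations, so alternatively one derives them from \eqref{(1)}--\eqref{(6)} directly, mirroring the computation in the proof of \Cref{universalcomplete}; (d) is just \eqref{(3)} with $E(-1),E(-i_h)$ eliminated; and (e),(f),(g) are the defining relations of $\DE_2(\Gamma_n(\Z)) \cong \U(\Gamma_n(\Z))$ from \eqref{(6)} together with the conjugation relations \eqref{(5)}, repackaged: \eqref{(5)} gives $E(i_h)D(i_k) = D(-i_k)E(i_h) = E(0)D(i_k)E(0)^{-1}E(i_h)$ for $h \ne k$, which after using $E(0)D(i_h) = D(-i_h)E(0)$ again yields the "$D$ conjugates $E$ to $E$" statements, and $D(i_h)D(i_k)D(i_h) = D(i_k)$ for $h<k$ is the commutator relation $i_h i_k = -i_k i_h$ transported through the isomorphism $\varphi$ of \Cref{description DE2} (note $\varphi(-1) = D(-1) = -I$, so $i_h i_k i_h^{-1} = -i_k$ becomes $D(i_h)D(i_k)D(i_h)^{-1} = D(-1)D(i_k) = -I\cdot D(i_k)$; one then checks $-I$ is absorbed correctly so that the clean form $D(i_h)D(i_k)D(i_h) = D(i_k)$ holds, using $D(i_h)^2 = I$).

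For the converse direction, I would run the substitutions backwards: define $E(-1), E(-i_h)$ and $D(-1), D(-i_h), D(1)$ by the dictionary above as new redundant generators, and verify that each of \eqref{(1)}--\eqref{(6)} is derivable from (a)--(g). Relations \eqref{(1)},\eqref{(2)} are direct; \eqref{(3)} is (d) plus the definitions of the "minus" generators; \eqref{(4)} follows from (b) (the $E(1)^3 = -I$ and $(D(i_h)E(i_h))^3 = I$ parts) after rewriting; \eqref{(5)} is recovered from (f) and (g) together with the definition $D(-i_h) := E(0)D(i_h)E(0)^{-1}$ and relation (c) (which encodes precisely $E(1)D(i_1) = D(-i_1)E(-1)$ after rearrangement — indeed (c) reads $E(1)E(0)D(i_1)E(1)D(i_1)E(0) = I$, i.e. $E(1)(E(0)D(i_1)E(0)^{-1}) \cdot E(0)^2 \cdot E(1)D(i_1)E(0) = I$, which one solves for $E(1)D(i_1)$); and \eqref{(6)}, the full multiplication table of $\U(\Gamma_n(\Z))$ on the $D(i_h)$, follows from (b) (orders), (g) (anticommutation up to $-I$) and (a) (centrality of $-I$), since $\U(\Gamma_n(\Z))$ is generated by $-1, i_1,\dots,i_{n-1}$ subject to exactly $(-1)^2 = 1$, $-1$ central, $i_h^2 = -1$, $i_h i_k = -i_k i_h$, matching $D(-1)^2 = I$ (from $(-I)^2 = I$), $D(i_h)^2 = D(-1)$ (from $D(i_h)^2 = I$ and $D(-1) = -I = E(0)^2$ — here one must double-check the sign convention so that $D(i_h)^2$ equals $-I$ as an element, and indeed in $\DE_2$ one has $D(i_h)^2 = [i_h^2, i_h^{-2}] = [-1,-1] = D(-1) = -I$, consistent with (b) reading $D(i_h)^2 = I$ only if $-I = I$, so I would be careful here — more likely (b) should be read with $-I$ tracked and the statement $D(i_h)^2 = -I$; I would reconcile this against \Cref{firstpresent}\eqref{(4)}).

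The main obstacle I expect is precisely this sign/$-I$ bookkeeping throughout — the central element $-I = E(0)^2 = D(-1)$ threads through nearly every relation, and the difference between, say, $(D(i_h)E(i_h))^3 = I$ versus $= -I$, or $D(i_h)^2 = I$ versus $-I$, depends on getting the conjugation relations \eqref{(5)} and the isomorphism $\DE_2(\Gamma_n(\Z)) \cong \U(\Gamma_n(\Z))$ exactly right. Since \Cref{firstpresent} itself has just been established (so its relations \eqref{(1)}--\eqref{(6)} are available verbatim) and \Cref{universalcomplete} lets us drop \eqref{RC7}, there is no deep new input needed; the proof is a careful Tietze transformation, and the only genuine computations are the short derivations of relations (b) and (c), which parallel the ones already carried out in the proof of \Cref{universalcomplete} (the norm-$\sqrt 2$ case) and can be cited or reproduced in a few lines. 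I would therefore present the proof as: "it is straightforward that (a)--(g) follow from \eqref{(1)}--\eqref{(6)}", then do the converse carefully, flagging the $D(i_h)E(i_h)$ computation and the elimination of $E(-1), E(-i_h)$ as the two places where a reader should check the signs.
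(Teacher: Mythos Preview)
Your overall strategy---a two-way Tietze argument between the presentations of \Cref{firstpresent} and \Cref{secondpresent}---is exactly what the paper does. However, two points of your execution diverge from the paper and cause trouble.

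First, your substitution dictionary differs from the paper's in a way that matters. You eliminate $E(-1)$ and $E(-i_h)$ via relation \eqref{(2)}, writing $E(-i_h) = E(0)^{-1}E(i_h)^{-1}E(0)^{-1}$. The paper instead uses relation \eqref{(5)} to write
\[
E(-1) = D(i_1)E(1)D(i_1), \qquad E(-i_h) = D(i_h)E(i_h)D(i_h).
\]
With the paper's substitution, relation (c) becomes a direct transcription of \eqref{(2)}: indeed $E(1)E(0)E(-1) = E(0)^3$ rewrites to $E(1)E(0)D(i_1)E(1)D(i_1)E(0) = I$, and similarly for $E(i_h)$. With your substitution, \eqref{(2)} becomes tautological and (c) has no obvious source---which is precisely why you were led to guess that (c) comes from the norm-$\sqrt 2$ relations \eqref{RC7}. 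That guess is incorrect: (c) is \eqref{(2)}, not \eqref{RC7}, and invoking \Cref{universalcomplete} here is circular anyway (its proof uses essentially this presentation). The paper's choice of substitution also makes (e) transparent: it is exactly the statement that the several expressions $D(i_h)E(1)D(i_h)$ for $E(-1)$ coming from \eqref{(5)} all coincide.

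Second, your sign bookkeeping on $D(i_h)^2$ is off: the statement (b) asserts $D(i_h)^2 = -I$, not $I$, and this is immediate from $\DE_2(\Gamma_n(\Z)) \cong \U(\Gamma_n(\Z))$ (so $D(i_h)^2 = D(-1) = -I$); you flagged this yourself, but it propagates into your reading of (g) as well. Once the paper's substitution is used, all of (a)--(g) drop out of \eqref{(1)}--\eqref{(6)} with no computation beyond rewriting, and the converse reduces (as the paper notes) to checking only the second line of \eqref{(4)}, the $D(-1)$-cases of \eqref{(5)}, and \eqref{(6)}.
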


\begin{proof}
First note that the previous lemma is true for $n=1$. For the rest of the proof we suppose $n \geq 2$. First we show that we can drop some of the generators. By relations \eqref{(6)} from \Cref{firstpresent}, $D(-i_h)=D(i_h)^{-1}$ for $1 \leq h \leq n-1$. Hence the generators $D(-i_h)$, with $0 \leq h \leq n-1$, may be dropped. By relations \eqref{(5)}, 
\begin{align*}
E(-1) & = D(-i_1)^{-1}E(1)D(i_1) = D(i_1)E(1)D(i_1)\\
E(-i_h) & = D(-i_h)^{-1}E(i_h)D(i_h) = D(i_h)E(i_h)D(i_h)
\end{align*}
Hence we can also drop these generators by replacing them by the above expression. We set $-I=E(0)^2$ and add it to the list of generators. Finally $D(-1)$ may be written by the second relation in \eqref{(4)} as $E(0)^2D(i_1)E(1)^3D(i_1)$ and hence it may also be dropped. This gives the list of generators. 

Now we turn to the relations. Clearly relation \eqref{(1)} and \eqref{(a)} are equivalent. We consider the relations in \eqref{(b)}. The first one is the definition of $-I$. The relations $D(i_h)^2=-I$ for $1 \leq h \leq n-1$ are parts of the relations given by $\DE_2(\Gamma_n(\Z))$, i.e. \eqref{(6)}. The next relations are equivalent with the second relations in \eqref{(5)}. 

Finally, the first relation in \eqref{(4)} is equivalent with $E(1)^3=-I$. Note that this also gives us that $D(-1)=E(0)^2=-I$. Replacing $E(-i_h)$ for $1 \leq h \leq n-1$ by their expressions in relations \eqref{(4)} gives, 
$$D(i_h)=E(0)^2E(i_h)D(i_h)E(i_h)D(i_h)E(i_h).$$
Multiplying the left hand side and the right hand side by $D(i_h)$ gives the relation in \eqref{(b)}.
Relations \eqref{(c)} are exactly relations \eqref{(2)} where $E(-1)$ and $E(-i_h)$ for $1 \leq h \leq n-1$ have been replaced by their new expressions. Also relations \eqref{(d)} and \eqref{(3)} are exactly the same. 
Relations \eqref{(e)} come from the fact that $E(-1)$ can be expressed in three different forms from relations \eqref{(5)}. 
In the same way, relations \eqref{(f)} are derived from the fact that $E(i_h)$ for $1 \leq h \leq n-1$ have different expressions in relations \eqref{(5)}. 
Finally relations \eqref{(g)} represent parts of the relations defining $\DE_2(\Gamma_n(\Z))$. 

This shows that relations \eqref{(a)}-\eqref{(g)} are derived from \eqref{(1)}-\eqref{(6)}. To show the converse, we only need to prove that the second relation of \eqref{(4)}, the relations of \eqref{(5)} involving $D(-1)$ and the relations \eqref{(6)} can be derived from \eqref{(a)}-\eqref{(g)}. 
For the second relation of \eqref{(4)}, since $D(-1)$ is renamed $-I$ and $E(0)^2=-I$ this is equivalent to showing that $E(-1)^3 = I$. Using the form of $E(-1)$ above, $E(-1)^3 = D(i) E(1)^3 D(i) = -D(i)^2 = I$. 
Consider now the relations in \eqref{(5)}. These are just a consequence of the fact that $D(-1)=-I$ which is central by \eqref{(a)}. 
Finally the relations $D(i_h)^2 =-I$ for $1 \leq h \leq n-1$ together with \eqref{(g)} give all the relations defining $\U(\Gamma_n(\Z))$, and by \Cref{description DE2}, the latter is isomorphic with $\DE_2(\Gamma_n(\Z))$. Thus relations \eqref{(6)} can be derived from \eqref{(a)}-\eqref{(g)}.
\end{proof}

Continuing our quest for an amalgamation, we follow the strategy of Fine in \cite{FineBook} and rewrite the generators as follows

\begin{tabular}{lll}
$J  = -I = \begin{pmatrix}
-1 & 0 \\ 0 & -1
\end{pmatrix},$ 
& $A = E(0)  = \begin{pmatrix}
0 & 1 \\ -1 & 0 
\end{pmatrix},$
& $T_1  = E(0)E(1)^{-1}  = \begin{pmatrix}
1 & 1 \\ 0 & 1 
\end{pmatrix},$\\ 
$T_{i_h}  = E(0)E(i_h)^{-1} = \begin{pmatrix}
1 & i_h \\ 0 & 1 
\end{pmatrix},$
& $L_{i_h}  = D(i_h)  = \begin{pmatrix}
i_h & 0 \\ 0 & -i_h 
\end{pmatrix},$& for $ 1 \leq h \leq n-1$.
\end{tabular}

Then we get the following abstract presentation for $\E_2(\Gamma_n(\Z))$.

\begin{lemma}\label{firstabstractpresent}
For $n \geq 1$, $\E_2(\Gamma_n(\Z))$ is generated by $ J, A, T_1, T_{i_h}, L_{i_h}$ for $ 1 \leq h \leq n-1$ and the relations are given by
\begin{enumerate}[(i)]
\item $J^2= I, J \textrm{ central }$ \label{(i)}
\item $A^2=L_{i_h}^2=(AL_{i_h})^2=J$  for $\ 1 \leq h \leq n-1$\label{(ii)}
\item $(T_1A)^3=(T_{i_h}L_{i_h}A)^3=I$  for $1 \leq h \leq n-1$ \label{(iii)}
\item $\left[T_1,T_{i_h}\right]=\left[T_{i_h},T_{i_k}\right]=I$, for $1 \leq h <k \leq n-1$\label{(iv)}
\item $(L_{i_1}^{-1}T_1)^2=(L_{i_h}^{-1}T_{i_h})^2=J$ for $1 \leq h \leq n-1$\label{(v)}
\item $L_{i_1}T_1L_{i_1}=\ldots =L_{i_{n-1}}T_1L_{i_{n-1}}$\label{(vi)}
\item $\left[L_{i_h},T_{i_k}\right]=I$ for $ 1 \leq h \neq k \leq n-1$\label{(vii)}
\item $ (L_{i_h}L_{i_k})^2=J$ for $1 \leq h<k \leq n-1$ \label{(viii)}
\end{enumerate}
\end{lemma}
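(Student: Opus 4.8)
The plan is to verify directly that \Cref{firstabstractpresent} is nothing more than a transcription of \Cref{secondpresent} under the invertible change of generators
\[
J = -I,\quad A = E(0),\quad T_1 = E(0)E(1)^{-1},\quad T_{i_h} = E(0)E(i_h)^{-1},\quad L_{i_h} = D(i_h).
\]
First I would note that this substitution is indeed a bijection between the two generating sets at the level of free groups: each of $E(0)$, $E(1)$, $E(i_h)$, $D(i_h)$ and $-I$ can be recovered from $J, A, T_1, T_{i_h}, L_{i_h}$ (namely $E(0) = A$, $E(1) = T_1^{-1}A$, $E(i_h) = T_{i_h}^{-1}A$, $D(i_h) = L_{i_h}$, $-I = J$), so it suffices to check that the relation set \eqref{(a)}--\eqref{(g)} and the relation set \eqref{(i)}--\eqref{(viii)} cut out the same normal subgroup of the free group on these generators. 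This is a purely mechanical but routine rewriting task, which I would organize relation-by-relation.

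The key steps, in order, are the following. (1) Relation \eqref{(i)} is literally \eqref{(a)}. (2) For \eqref{(ii)}: $A^2 = E(0)^2 = -I = J$ by \eqref{(b)}; $L_{i_h}^2 = D(i_h)^2 = -I = J$ by \eqref{(b)}; and $(AL_{i_h})^2 = (E(0)D(i_h))^2 = -I = J$ by \eqref{(b)}. (3) For \eqref{(iii)}: $(T_1A)^3 = (E(0)E(1)^{-1}E(0))^3$; using $E(1)^{-1} = E(0)E(-1)E(0)$ (relation \eqref{(2)}/\eqref{(b)}) this becomes $(E(-1))^3 \cdot (\text{central adjustments})$, and one checks it equals $I$ from $E(-1)^3 = I$, which was derived in the proof of \Cref{secondpresent}. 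Similarly $(T_{i_h}L_{i_h}A)^3 = I$ unwinds to relation \eqref{(c)} after substituting the definitions; this is the step requiring the most care, since one must track the central element $J$ through the conjugations and use \eqref{(c)} in the exact form stated. (4) Relations \eqref{(iv)} are relations \eqref{(d)} rewritten: $[T_1,T_{i_h}] = I$ is equivalent to $E(0)E(1)^{-1}E(0)E(i_h)^{-1} = E(0)E(i_h)^{-1}E(0)E(1)^{-1}$, which after inverting and conjugating is exactly $E(1)E(0)E(i_h) = E(i_h)E(0)E(1)$, and likewise for $[T_{i_h},T_{i_k}]$. (5) Relations \eqref{(v)}: $(L_{i_h}^{-1}T_{i_h})^2 = (D(i_h)^{-1}E(0)E(i_h)^{-1})^2$; using $D(i_h)^{-1} = D(-i_h)$ and the relation $E(i_h)D(i_h) = D(-i_h)E(-i_h)$ from \eqref{(5)} together with $E(i_h)^{-1} = E(0)E(-i_h)E(0)$ one reduces this to the relation \eqref{(c)}-type identity $E(i_h)E(0)D(i_h)E(i_h)D(i_h)E(0) = I$, i.e. exactly \eqref{(c)}, up to central elements. (6) Relations \eqref{(vi)}, \eqref{(vii)}, \eqref{(viii)} are immediate translations of \eqref{(e)}, the commuting relations contained in \eqref{(d)}/\eqref{(f)}, and \eqref{(g)} respectively, since $T_1, T_{i_h}$ differ from $E(1)^{-1}, E(i_h)^{-1}$ only by the central-up-to-sign factor $E(0) = A$, which commutes appropriately or can be absorbed. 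Conversely, I would read each implication backwards to recover \eqref{(a)}--\eqref{(g)} from \eqref{(i)}--\eqref{(viii)}, again using that the generator dictionary is invertible.

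The main obstacle, as usual in such presentation bookkeeping, is relation \eqref{(iii)} — specifically $(T_{i_h}L_{i_h}A)^3 = I$ — and its interplay with \eqref{(v)}: one has to be scrupulous about the order-$2$ central element $J = -I = E(0)^2$ floating through the products $E(0)E(i_h)^{-1}D(i_h)E(0)$, because a sign error there would be invisible to a casual check but would change the relation. The cleanest way to handle this is to do all the manipulation inside the abstract group defined by \Cref{secondpresent} (so that $E(0)^2$ is genuinely central of order $2$), rewrite $(T_{i_h}L_{i_h}A)^3$ entirely in terms of $E(0), E(i_h), D(i_h)$ using only relations \eqref{(b)} and \eqref{(5)}-type relations from \Cref{firstpresent}, and match the result term-by-term against \eqref{(c)}; then reverse. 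Everything else is a short and essentially formal verification, so once \eqref{(iii)} and \eqref{(v)} are pinned down the lemma follows.
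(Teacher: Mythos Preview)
Your approach is the same as the paper's: both simply verify that the invertible change of generators carries the presentation of \Cref{secondpresent} to the one stated. Your relation-by-relation dictionary is off in one place, however. Relation \eqref{(iii)}, and in particular $(T_{i_h}L_{i_h}A)^3 = I$, does \emph{not} unwind to \eqref{(c)} but to the last clause of \eqref{(b)}, namely $(D(i_h)E(i_h))^3 = I$. Indeed, using $E(i_h)^{-1} = E(0)E(-i_h)E(0)$ together with $(D(i_h)E(0))^2 = -I$ and $E(-i_h) = D(i_h)E(i_h)D(i_h)$, one computes directly that $T_{i_h}L_{i_h}A = D(i_h)E(i_h)$, so the two cubes are literally the same element. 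Thus \eqref{(iii)} is the easy case, not the delicate one. The paper's correspondence is \eqref{(i)}$\leftrightarrow$\eqref{(a)}, \eqref{(ii)}--\eqref{(iii)}$\leftrightarrow$\eqref{(b)}, \eqref{(iv)}$\leftrightarrow$\eqref{(d)}, \eqref{(v)}$\leftrightarrow$\eqref{(c)}, \eqref{(vi)}$\leftrightarrow$\eqref{(e)}, \eqref{(vii)}$\leftrightarrow$\eqref{(f)}, \eqref{(viii)}$\leftrightarrow$\eqref{(g)}; once you correct this your argument and the paper's coincide.
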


\begin{proof}
Clearly \eqref{(i)} and \eqref{(a)} are exactly the same. Relations \eqref{(ii)} and \eqref{(iii)} are just a rewriting of the relations \eqref{(b)} from \Cref{secondpresent}.
Relations \eqref{(iv)} and \eqref{(v)} are equivalent with relations \eqref{(d)} and \eqref{(c)} respectively.
Relation \eqref{(vi)} corresponds to \eqref{(e)} and \eqref{(vii)} corresponds to the relations in \eqref{(f)}. Indeed, consider the relation $\left[L_{i_h},T_{i_k}\right]=I$. This gives $D(i_h)E(0)E(i_k)^{-1}D(i_h)E(i_k)E(0)=I$. Using that $D(i_h)$ and $E(0)$ anticommute and that $E(0)^2=J$, we get that $D(i_h)E(i_k)^{-1}D(i_h)E(i_k)=I$. This is clearly equivalent with $D(i_h)E(i_k)^{-1}D(i_h)=E(i_k)^{-1}$. By taking inverses we are done. 
Finally relations \eqref{(viii)} are exactly the relations in \eqref{(g)}. 
\end{proof}

\begin{remark} \label{E_2LL_2abelianization}
From the presentation in \Cref{firstabstractpresent} one can easily determine the abelianization of $\E_2(\Gamma_n(\Z))$. One obtains
\begin{align*}
\E_2(\Gamma_1(\Z))^{ab} & \cong C_{12},\\
\E_2(\Gamma_n(\Z))^{ab} & \cong C_2^{n}  \textrm{ for } n \geq 2.
\end{align*}
In particular we recover the well-known results that $\SL_2(\Z)^{ab} \cong C_{12}$ and $\SL_2(\Z[i])^{ab} \cong C_2 \times C_2$ \cite[Corollary~5.2]{Swan}. It also shows, by \Cref{n3quat}, that $\E_2(\LL)^{ab} \cong \E_2(\Gamma_4(\Z))^{ab} \cong C_2^4$.
Note that this has been proven with different methods in \cite{abelianizationpaper}. 
\end{remark}

We rewrite the generators one last time. Inspired by \cite{FineBook} and \cite{MacWatWie}, we set
$$j = J, \ a=A,\ b_{i_h}=AL_{i_h},\   c=T_1A, d_{i_h}=T_{i_h}L_{i_h}A,$$
for $1 \leq h \leq n-1$.
We are able to rewrite the presentation of $\E_2(\Gamma_n(\Z))$ given these generators. The proof of the following lemma works along the same lines as the proofs before. 

\begin{lemma} \label{presentationOL}
For $n \geq 1$,
\begin{eqnarray*}
\E_2(\Gamma_n(\Z)) & = &\left\langle j,a,b_{i_h},c,d_{i_h}, \textrm{ for } 1 \leq h \leq n-1 \mid \right.\\ 
&& j^2= 1, j \textrm{ central },\\
&&a^2=b_{i_h}^2=(ab_{i_h})^2=j \textrm{ for } 1 \leq h \leq n-1,\\
&&c^3=d_{i_h}^3=1 \textrm{ for } 1 \leq h \leq n-1,\\
&& (b_{i_h}c)^2=(ad_{i_h})^2=(d_{i_h}c^{-1})^2=j \textrm{ for } 1 \leq h \leq n-1,\\
&&(b_{i_h}b_{i_k})^2=j \textrm{ for } 1 \leq h<k \leq n-1,\\
&&(b_{i_h}d_{i_k})^2=j \textrm{ for } 1 \leq h \neq k \leq n-1,\\
&&(d_{i_h}d_{i_k}^{-1})^2=1 \textrm{ for } 1 \leq h<k \leq n-1\rangle. 
\end{eqnarray*}
\end{lemma}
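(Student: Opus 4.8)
The plan is to change presentation along the explicit substitution $j = J$, $a = A$, $b_{i_h} = AL_{i_h}$, $c = T_1 A$, $d_{i_h} = T_{i_h}L_{i_h}A$, following exactly the scheme used in the proofs of \Cref{secondpresent}, \Cref{firstabstractpresent}. First I would observe that this substitution is invertible over $\Z$ in the free group on the generators: indeed $J = j$, $A = a$, $L_{i_h} = a^{-1}b_{i_h}$, $T_1 = c a^{-1}$ and $T_{i_h} = d_{i_h}(a^{-1}b_{i_h})^{-1}a^{-1} = d_{i_h} b_{i_h}^{-1} a \, a^{-1} = d_{i_h}b_{i_h}^{-1}$ (using $a^2 = j$ central, so $a^{-1} = a^{-1}$; one should present the honest formula $T_{i_h} = d_{i_h} L_{i_h}^{-1}A^{-1}$ and then rewrite $L_{i_h}^{-1} = b_{i_h}^{-1}a$, $A^{-1} = a^{-1}$). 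Hence the two generating sets generate the same group and it suffices to verify that each defining relation of \Cref{firstabstractpresent} translates into a consequence of the listed relations, and conversely.

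Next I would go through the relations one by one. Relation \eqref{(i)} becomes $j^2 = 1$, $j$ central verbatim. From \eqref{(ii)}, $A^2 = J$ gives $a^2 = j$; $L_{i_h}^2 = J$ together with $b_{i_h} = AL_{i_h}$ and $(AL_{i_h})^2 = J$ gives $b_{i_h}^2 = j$ directly, while $A^2 = L_{i_h}^2 = (AL_{i_h})^2 = J$ together with $J$ central is equivalent to $A$ and $L_{i_h}$ anticommuting modulo the central $J$, which is what is needed to move between $L_{i_h}$ and $b_{i_h}$ freely. Relation \eqref{(iii)}, $(T_1 A)^3 = I$, is exactly $c^3 = 1$; and $(T_{i_h}L_{i_h}A)^3 = I$ is exactly $d_{i_h}^3 = 1$. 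For \eqref{(v)}, $(L_{i_h}^{-1}T_{i_h})^2 = J$: substituting $T_{i_h} = d_{i_h}L_{i_h}^{-1}A^{-1}$ and simplifying using $L_{i_h}^2 = J$, $A^2 = J$ central, one reaches $(d_{i_h}c^{-1})^2 = j$ in the cases $h \geq 2$ (with the $T_1$ case giving the $(d_{i_1}c^{-1})$ instance via $T_1 = cA^{-1}$), and the $(ad_{i_h})^2 = j$, $(b_{i_h}c)^2 = j$ relations come out of the same computation rearranged — here one must be careful to track where each of the three squared-relations of the target list originates; the cleanest bookkeeping is to note $ad_{i_h} = a T_{i_h}L_{i_h}a = A T_{i_h} L_{i_h} A$ and compute its square using \eqref{(v)} and \eqref{(ii)}, and similarly $b_{i_h}c = AL_{i_h}T_1 A$. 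Relations \eqref{(iv)}, \eqref{(vii)} (the commutators among the $T$'s and between $L$'s and $T$'s) translate into $(b_{i_h}d_{i_k})^2 = j$ and the remaining conjugation identities; relation \eqref{(vi)} $L_{i_1}T_1 L_{i_1} = \dots = L_{i_{n-1}}T_1 L_{i_{n-1}}$ is needed to pin down the $b$–$c$ interaction and should be shown equivalent to the collection of $(b_{i_h}c)^2 = j$ together with the $(b_{i_h}b_{i_k})^2 = j$ coming from \eqref{(viii)}, $(L_{i_h}L_{i_k})^2 = J$, which directly becomes $(b_{i_h}b_{i_k})^2 = j$ after using that $A$ anticommutes with each $L$ modulo $J$. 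Finally $(d_{i_h}d_{i_k}^{-1})^2 = 1$ should be derived from \eqref{(vi)}, \eqref{(vii)}, \eqref{(viii)} by expanding $d_{i_h}d_{i_k}^{-1}$ in terms of $T$'s and $L$'s.

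Since every step is a finite rewriting in a finitely presented group with an invertible Tietze transformation of the generating set, the argument is purely mechanical; I would simply assert that relations \eqref{(i)}--\eqref{(viii)} of \Cref{firstabstractpresent} are, after the substitution, logically equivalent to the listed relations, the verifications ``working along the same lines as the proofs before'' (precisely as the lemma statement already signals). The one place that genuinely needs care — and which I expect to be the only real obstacle — is matching up the \emph{three} mixed relations $(b_{i_h}c)^2 = (ad_{i_h})^2 = (d_{i_h}c^{-1})^2 = j$ with the \emph{two} sources \eqref{(iii)}, \eqref{(v)} plus \eqref{(vi)}: one must check that no independent relation is silently gained or lost, i.e. that the Tietze move is genuinely an isomorphism of presentations and not merely a quotient map in one direction. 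Concretely I would verify this by exhibiting, for each listed relation, an explicit word in the old relators witnessing it, and for each old relation an explicit word in the new relators — once both directions are in hand, the presentations define isomorphic groups and the lemma follows.
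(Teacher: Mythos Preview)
Your approach is correct and is exactly what the paper does: the paper's proof consists of the single sentence ``The proof of the following lemma works along the same lines as the proofs before,'' meaning precisely the Tietze change of generators $j = J$, $a = A$, $b_{i_h} = AL_{i_h}$, $c = T_1A$, $d_{i_h} = T_{i_h}L_{i_h}A$ followed by the mechanical translation of relations \eqref{(i)}--\eqref{(viii)} that you outline. One small slip: your ``honest formula'' should read $T_{i_h} = d_{i_h}A^{-1}L_{i_h}^{-1}$ rather than $d_{i_h}L_{i_h}^{-1}A^{-1}$, though since $A$ and $L_{i_h}$ anticommute modulo the central $J$ this only introduces a harmless factor of $j$ and does not affect the equivalence of presentations.
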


Finally, the presentation in \Cref{presentationOL} has the perfect form to prove our main theorem. 

\begin{theorem}\label{prop:E_2_O_L_is_amalgam}
For $n \geq 1$, the group $\E_2(\Gamma_n(\Z))$ is a non-trivial amalgamated product. Furthermore, for $n \geq 2$, the group over which the product is amalgamated is $\E_2(\Gamma_{n-1}(\Z))$. 
\end{theorem}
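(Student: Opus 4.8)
The plan is to exploit the presentation in \Cref{presentationOL} together with \Cref{disjoint generators amalgam}, which lets one read off an amalgamated product decomposition directly from a presentation whose generators split into three pairwise-disjoint blocks $A\cup B\cup C$ with all relations living in $A\cup C$ or $B\cup C$. So the whole task reduces to finding the right partition of the generating set $\{j,a,b_{i_h},c,d_{i_h} : 1\le h\le n-1\}$ and checking that every relation in \Cref{presentationOL} respects it.

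For $n\ge 2$, the natural choice is to single out the ``top'' index $h=n-1$: put $B=\{b_{i_{n-1}},d_{i_{n-1}}\}$, let $C$ be the generators of $\E_2(\Gamma_{n-1}(\Z))$, namely $\{j,a,b_{i_h},c,d_{i_h} : 1\le h\le n-2\}$ (which is empty of $b$'s and $d$'s when $n=2$, leaving just $C=\{j,a,c\}$, the generators of $\E_2(\Gamma_1(\Z))=\SL_2(\Z)$), and take $A=\varnothing$. Then one has to verify: (1) the relations in \Cref{presentationOL} that only involve indices $\le n-2$ are precisely a presentation of $\E_2(\Gamma_{n-1}(\Z))$ — this is immediate by comparing with the same lemma at level $n-1$; (2) every relation involving $b_{i_{n-1}}$ or $d_{i_{n-1}}$ also involves only $C$-generators and the elements of $B$, never introducing anything outside $B\cup C$ — inspecting the list, the relevant relations are $b_{i_{n-1}}^2=(ab_{i_{n-1}})^2=j$, $d_{i_{n-1}}^3=1$, $(b_{i_{n-1}}c)^2=(ad_{i_{n-1}})^2=(d_{i_{n-1}}c^{-1})^2=j$, together with the cross relations $(b_{i_{n-1}}b_{i_k})^2=j$, $(b_{i_{n-1}}d_{i_k})^2=j$, $(b_{i_k}d_{i_{n-1}})^2=j$ and $(d_{i_{n-1}}d_{i_k}^{-1})^2=1$ for $k\le n-2$; each of these only uses $a,c,j$ and the index-$k$ generators (all in $C$) plus the two $B$-generators. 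Hence \Cref{disjoint generators amalgam} applies and gives
\[
\E_2(\Gamma_n(\Z)) \cong G_{BC} \ast_{\E_2(\Gamma_{n-1}(\Z))} G_{BC'}
\]
with $G_C=\langle C\rangle=\E_2(\Gamma_{n-1}(\Z))$ and the other factor $G_{BC}=\langle B\cup C\rangle$. The case $n=1$ is handled separately by the already-cited classical fact (Serre) that $\E_2(\Gamma_1(\Z))=\SL_2(\Z)\cong C_4\ast_{C_2}C_6$.

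Two points need care. First, I must confirm the decomposition is \emph{non-trivial}, i.e. neither injection $G_C\hookrightarrow G_{BC}$ nor the other is surjective; equivalently the added generators $b_{i_{n-1}},d_{i_{n-1}}$ genuinely enlarge each factor and $C$ is a proper subgroup of both. For $n\ge 2$ this follows because $\E_2(\Gamma_{n-1}(\Z))$ is a proper subgroup of $\E_2(\Gamma_n(\Z))$ (for instance by the abelianization computation in \Cref{E_2LL_2abelianization}: the ranks $C_2^{n-1}$ versus $C_2^n$ differ, so the inclusion cannot be onto, and a fortiori neither amalgam factor can coincide with $C$ — here one also checks $\langle C\rangle\ne\langle B\cup C\rangle$, again via abelianization or by exhibiting that $T_{i_{n-1}}$ is not in the subgroup generated by the lower generators). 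Second, one should double-check that the subgroup $\langle C\rangle$ of the abstract group presented in \Cref{presentationOL} really is isomorphic to $\E_2(\Gamma_{n-1}(\Z))$ and not merely a quotient of it — but this is exactly the content of \Cref{disjoint generators amalgam}, whose conclusion identifies $G_C$ with the group presented by the relations $R_{C}$ among the generators $C$, and that presentation is verbatim the one in \Cref{presentationOL} at level $n-1$.

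The main obstacle I anticipate is purely bookkeeping: making absolutely sure that the relation list in \Cref{presentationOL} contains \emph{no} relation mixing $b_{i_{n-1}}$ or $d_{i_{n-1}}$ with a $c$- or $d$- or $b$-generator in a way that would force a generator outside $B\cup C$ — in particular there is no relation of the form ``$b_{i_h}$ commutes with $c$'' for $h\le n-2$ that would be needed inside $G_C$ but absent, nor a relation like $(b_{i_{n-1}}d_{i_{n-1}})\cdots$ that cannot be sorted into one block. The list is symmetric enough in the indices that this works, but the argument should spell out the block assignment explicitly and walk through each family of relations once. No deep input beyond \Cref{disjoint generators amalgam} and the presentation is required; the content is the correct choice of partition and the verification of non-triviality.
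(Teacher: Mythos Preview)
Your overall plan --- apply \Cref{disjoint generators amalgam} to the presentation in \Cref{presentationOL} --- is exactly right and is precisely the paper's approach. However, your chosen partition is wrong and yields only a \emph{trivial} amalgamated product. You put both top-index generators $b_{i_{n-1}}$ and $d_{i_{n-1}}$ into the same block $B$ and take $A=\varnothing$. But then $G_{AC}=\langle A\cup C\rangle=\langle C\rangle=G_C$, so the canonical injection $G_C\hookrightarrow G_{AC}$ is the identity, hence surjective --- which is exactly the definition of a trivial amalgam. (Your displayed formula with the undefined symbol ``$G_{BC'}$'' already hints at some confusion about what the second factor should be.) Your non-triviality argument via the abelianization only shows $G_C\subsetneq G_{BC}$; it says nothing about $G_C\subsetneq G_{AC}$, and with your partition that inclusion is an equality.

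The paper's fix is to \emph{separate} the two new generators: take (in the notation of \Cref{disjoint generators amalgam}) $A=\{b_{i_{n-1}}\}$, $B=\{d_{i_{n-1}}\}$, and $C=\{j,a,c,b_{i_h},d_{i_h}\mid 1\le h\le n-2\}$ as you have it. The crucial point you missed is that no relation in \Cref{presentationOL} involves $b_{i_{n-1}}$ and $d_{i_{n-1}}$ simultaneously: the cross relation $(b_{i_h}d_{i_k})^2=j$ is imposed only for $h\neq k$, and the relations $(b_{i_h}c)^2=(ad_{i_h})^2=(d_{i_h}c^{-1})^2=j$ at $h=n-1$ each touch only one of the two. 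A quick pass through the list shows every relation mentioning $b_{i_{n-1}}$ lies in $A\cup C$ and every relation mentioning $d_{i_{n-1}}$ lies in $B\cup C$. Now both factors $G_{AC}$ and $G_{BC}$ strictly contain $G_C=\E_2(\Gamma_{n-1}(\Z))$, and the amalgam is non-trivial. For $n=1$ the same idea with $A=\{a\}$, $B=\{c\}$, $C=\{j\}$ recovers $\SL_2(\Z)\cong C_4\ast_{C_2}C_6$.
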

\begin{proof}
For $n \geq 2$, this is a consequence of \Cref{disjoint generators amalgam} and \Cref{presentationOL}. It suffices to take $A= \{j,a,b_{i_1},\ldots, b_{i_{n-1}},c,d_{i_1}, \ldots, d_{i_{n-2}}\} $, $B = \{j,a,b_{i_1},\ldots,b_{i_{n-2}},c,d_{i_1}, \ldots, d_{i_{n-1}}\} $ and $C=\{j,a,b_{i_h},c,d_{i_h} \mid 1 \leq h \leq n-2\}$. The group over which the amalgamated product is taken, is a subgroup of $\E_2(\Gamma_n(\Z))$ generated by $j, a, b_{i_h},c,d_{i_h}$ for $1 \leq h \leq n-2$. By \Cref{presentationOL}, this is clearly $\E_2(\Gamma_{n-1}(\Z))$.  

For $n=1$, it suffices to take $A= \{j,a\} $, $B = \{j,c\} $ and $C=\{j\}$. Then $\E_2(\Z)$ is an amalgamated product over the group $C_2$. 
\end{proof}

\Cref{prop:E_2_O_L_is_amalgam} confirms results that are well-known in case $n=1$ and  $n=2$. In \cite[I.4.2 Example c)]{Serre} it is shown that $\SL_2(\Z)$ is the amalgamated product $C_4 \ast_{C_2} C_6$, which is what we obtain for $n=1$. In \cite[Theorem 4.4.1]{FineBook}, it is shown that the group $\PSL_2(\Z[i])$ is the amalgamated product $G_1 \ast_{\PSL_2(\Z)} G_2$. By \Cref{amalgamconserved}, we get an amalgamated product for $\SL_2(\Z[i])$, which is exactly the one we obtain for $n=2$, cf.\ also \Cref{isomE2normalE2Clifford}.

By \Cref{E2=SL+}, this implies an amalgam structure for $\SL_+(\Gamma_n(\Z))$ for small values of $n$.

\begin{corollary}\label{SL_2L_2isamalgam}
For $n \leq 4$, the group $\SL_+(\Gamma_n(\Z))$ is a non-trivial amalgamated product over $\SL_+(\Gamma_{n-1}(\Z))$.  
\end{corollary}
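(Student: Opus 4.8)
The corollary is meant to follow by combining the two inputs already established: the abstract amalgamated decomposition of $\E_2(\Gamma_n(\Z))$ in \Cref{prop:E_2_O_L_is_amalgam}, and the identification $\E_2(\Gamma_n(\Z))=\SL_+(\Gamma_n(\Z))$ for $n\le 4$ in \Cref{E2=SL+}. So the plan is simply to transport the decomposition along this identification, and then to check that the amalgamated subgroup ends up being the expected copy of $\SL_+(\Gamma_{n-1}(\Z))$.

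Concretely, for $2\le n\le 4$ I would argue as follows. By \Cref{E2=SL+}, the subgroup $\E_2(\Gamma_n(\Z))\le \SL_+(\Gamma_n(\Z))$ generated by the matrices $E(x)$ with $x\in\V^n(\Z)$ is all of $\SL_+(\Gamma_n(\Z))$; likewise, since $n-1\le 3\le 4$, the subgroup $\E_2(\Gamma_{n-1}(\Z))$ equals $\SL_+(\Gamma_{n-1}(\Z))$, and the inclusion $\Cliff_{n-1}(\Z)\subseteq\Cliff_n(\Z)$ embeds $\E_2(\Gamma_{n-1}(\Z))$ into $\E_2(\Gamma_n(\Z))$ exactly as $\SL_+(\Gamma_{n-1}(\Z))$ sits inside $\SL_+(\Gamma_n(\Z))$. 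By \Cref{prop:E_2_O_L_is_amalgam}, $\E_2(\Gamma_n(\Z))$ is a non-trivial amalgamated product whose amalgamated subgroup is precisely this copy of $\E_2(\Gamma_{n-1}(\Z))$. Reading everything back on the matrix level therefore yields a non-trivial decomposition of $\SL_+(\Gamma_n(\Z))$ as a free product with amalgamation over $\SL_+(\Gamma_{n-1}(\Z))$.

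For $n=1$ the statement is slightly degenerate, as there is no group $\SL_+(\Gamma_0(\Z))$; here I would simply record that $\SL_+(\Gamma_1(\Z))=\E_2(\Gamma_1(\Z))\cong\SL_2(\Z)$ is the non-trivial amalgam $C_4\ast_{C_2}C_6$ by the $n=1$ case of \Cref{prop:E_2_O_L_is_amalgam}. It is thus still a non-trivial amalgamated product, only with amalgamated subgroup $C_2$ rather than a lower higher modular group; the clause ``over $\SL_+(\Gamma_{n-1}(\Z))$'' is to be read for $2\le n\le 4$.

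The only point demanding care — and the place I expect a little bookkeeping — is the compatibility of the two identifications: one must be sure that the amalgamated factor $\E_2(\Gamma_{n-1}(\Z))$ coming out of the proof of \Cref{prop:E_2_O_L_is_amalgam} is genuinely the copy of $\SL_+(\Gamma_{n-1}(\Z))$ one wants, not merely an abstractly isomorphic subgroup embedded differently. But both isomorphisms in \Cref{E2=SL+} are the identity on matrices, and \Cref{prop:E_2_O_L_is_amalgam} describes the amalgamated subgroup concretely on the generators $j,a,b_{i_h},c,d_{i_h}$ for $1\le h\le n-2$, which — unwinding the substitutions of \Cref{presentationOL} and \Cref{firstabstractpresent} — are exactly the matrices built from $\V^{n-1}(\Z)\subseteq\V^n(\Z)$. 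So the compatibility is immediate once the substitutions are unwound, and there is no analytic or combinatorial obstacle: the statement is a direct corollary.
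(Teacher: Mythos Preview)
Your proof is correct and follows exactly the paper's route: the corollary is deduced immediately from \Cref{prop:E_2_O_L_is_amalgam} together with the identification $\E_2(\Gamma_n(\Z))=\SL_+(\Gamma_n(\Z))$ for $n\le 4$ from \Cref{E2=SL+}. The paper states this as a one-line consequence; your additional bookkeeping on the $n=1$ case and on the compatibility of the embedded copy of $\E_2(\Gamma_{n-1}(\Z))$ with $\SL_+(\Gamma_{n-1}(\Z))$ is extra care, but not a different argument.
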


\Cref{prop:E_2_O_L_is_amalgam} provides us thus with an alternative proof of \cite[Theorem 4.4.1]{FineBook}, which is the case $n = 2$.
Moreover, the group described in \cite[Section 6]{MacWatWie}, which is $\SL_+(\Gamma_3(\Z))$, is thus an amalgamated product over $\SL_2(\Z[i])$ and it is the amalgamated subgroup of the over group $\E_2(\Gamma_4(\Z)) = \SL_+(\Gamma_4(\Z)) \cong \E_2(\LL)$.
\Cref{prop:E_2_O_L_is_amalgam} says that this behaviour, i.e. the previous group is the amalgamated subgroup of the next group, continues on for the series of groups $(\E_2(\Gamma_n(\Z)))_{n\in \N\setminus \{0\}}$.
Remark that we use the groups $\E_2(\Gamma_n(\Z))$ and not $\SL_+(\Gamma_n(\Z))$.

With a view to \Cref{sectiongroupring}, we are especially interested in the more concrete group $\SL_2(\LL)$, the special linear group of Lipschitz quaternions.
Unfortunately, in contrast to the cases $n=1$ and $n=2$, one cannot directly deduce an isomorphism between $\SL_+(\Gamma_4(\Z)) = \E_2(\Gamma_4(\Z))$  and $\SL_2(\LL)$. On a higher level, we indeed have that $\SL_+(\Gamma_4(\R)) \cong \SL_2\left(\qa{-1}{-1}{\R}\right)$ (see \Cref{isocliffquat}) but this only implies that $\SL_+(\Gamma_4(\Z))$ and $\SL_2(\LL)$ are commensurable since they are both arithmetic subgroups of these groups.
However we can prove more precisely that up to isomorphism $\SL_+(\Gamma_4(\Z)) \leq \SL_2(\LL)$ and that the index is $4$. In order to prove this, we first need the following proposition.

\begin{proposition}\label{OL_is_GE2}
$\LL$ is a $\GE_2$-ring
\end{proposition}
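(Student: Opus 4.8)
The plan is to show that $\LL$, the ring of Lipschitz quaternions, is a $\GE_2$-ring by exhibiting an explicit (right) Euclidean-type division algorithm on $\LL$ and then invoking the folklore fact (\Cref{EuclideanGE2}) that Euclidean rings are $\GE_2$-rings. Concretely, I would use the reduced norm $\delta = \operatorname{RNr}$ as the Euclidean function. The point is that $\LL$ is \emph{not} norm-Euclidean in the strict sense (the Hurwitz order $\O_2$ is, but $\LL$ is a proper suborder), so the naive rounding argument fails exactly when the quotient lands near the ``center'' $\tfrac{1+i+j+k}{2}$ of a unit cube. I would handle this borderline case separately: when $b^{-1}a$ (or its suitable one-sided analogue) is congruent to $\tfrac{1+i+j+k}{2}$ modulo $\LL$, one cannot round to distance $<1$, but one can still reduce the $\GE_2$-generation problem by a direct matrix manipulation. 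So the proof naturally splits into (1) the generic case, handled by the Euclidean algorithm, and (2) the exceptional ``half-integer'' case.

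First I would set up the one-sided division statement precisely: for $a,b \in \LL$ with $b \neq 0$, I want $q \in \LL$ with $\operatorname{RNr}(a - bq) < \operatorname{RNr}(b)$, i.e. $|b^{-1}a - q| < 1$, \emph{unless} $b^{-1}a \equiv \tfrac{1+i+j+k}{2} \pmod{\LL}$, in which case $|b^{-1}a - q| = 1$ is the best possible. This is the quaternionic analogue of the covering-radius computation already used in the proof of \Cref{E2=SL+} for $\V^4(\Z) \cong \Z^4$: the lattice $\LL \cong \Z^4$ has covering radius exactly $1$, attained uniquely (mod $\LL$) at the deep hole $\tfrac{1+i+j+k}{2}$. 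Then, following the folklore proof of \Cref{EuclideanGE2}, given a matrix $M = \sm{a & b \\ c & d} \in \GL_2(\LL)$, I would: reduce to the case $b = 0$ by repeatedly multiplying on the right by elementary matrices $E(0)^{3}E(-q)E(0)$ to shrink $|b|$, exactly as in the proof of \Cref{E2=SL+}; once $b = 0$, the matrix is lower triangular and $a, d$ are units in $\LL$, so $\sm{a & 0 \\ c & d} = [a,d]\,E(0)^{3}E(d^{-1}c)$, and one checks $[a,d] \in \GE_2(\LL)$ (the unit group $\U(\LL) = \{\pm 1, \pm i, \pm j, \pm k\} \cong Q_8$ is finite, and each $[\mu,\nu]$ is a product of generators as in the proof of \Cref{description DE2}) and $d^{-1}c \in \LL$ so $E(d^{-1}c)$ is elementary.

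The main obstacle is the exceptional case $|b^{-1}a - q| = 1$: here the shrinking step of the Euclidean algorithm stalls. I would resolve it exactly as in the proof of \Cref{E2=SL+} for $n=4$: show that the bad configuration actually cannot persist because of the determinant constraint. If $b^{-1}a = z + \tfrac{1+i+j+k}{2}$ with $z \in \LL$, then from $\det M = ad - (\text{cross term}) $ being a unit (recall by \Cref{Dieudonneremark} the reduced norm of a $2\times 2$ matrix over a quaternion order is a positive rational, and for $M \in \GL_2(\LL)$ it is a rational integer, hence equals $1$), writing $a = b(z + \tfrac{1+i+j+k}{2})$ and taking reduced norms of the resulting identity $b\big((z+\tfrac{1+i+j+k}{2})d - c\big) = 1$ forces $\operatorname{RNr}(b) \cdot N = 1$ with $N \in \N$, contradicting $\operatorname{RNr}(b) \geq 2$ (which holds because $\tfrac{1+i+j+k}{2} \notin \LL$ forces $b$ to have even norm in this situation, or more simply because $|b|=1$ would mean $b$ is a unit and then $b^{-1}a \in \LL$, contradicting the half-integer form). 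Hence after finitely many reductions we always reach $b=0$, and the argument concludes. I expect the bookkeeping in this last contradiction to mirror almost verbatim the $n=4$ paragraph of \Cref{E2=SL+}, so I would present it briefly and refer back to that proof; the genuinely new content is merely the observation that the covering-radius geometry of $\LL$ is identical to that of $\V^4(\Z)$, which is immediate from $\LL \cong \Z + \Z i + \Z j + \Z k$ as a lattice with the standard reduced-norm quadratic form.
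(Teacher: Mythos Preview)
Your approach is correct and is exactly the paper's: replay the Euclidean reduction from the proof of \Cref{E2=SL+}, and exclude the borderline case $|b^{-1}a-\lceil b^{-1}a\rceil|=1$ by a determinant contradiction. One small caveat: the literal equation $b\big((z+\tfrac{1+i+j+k}{2})d-c\big)=1$ you write does not hold in the noncommutative setting (there is no ``$ad-bc$'' identity over a quaternion algebra); the correct version, via the Dieudonn\'e determinant of \Cref{Dieudonneremark}, is $1=\Delta(M)^2=|b|^2\cdot N$ with $N\in\N$, which yields the same contradiction --- the paper is equally terse at this step, saying only ``via the Dieudonn\'e determinant, one shows again that this would lead to a contradiction.''
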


\begin{proof}
The proof follows exactly the same strategy as the proof of \Cref{E2=SL+}. 
Again, for $z \in \qa{-1}{-1}{\Q}$, define $\lceil z \rceil$ to be an element $z_0$ of $\LL$ that minimizes $\vert z-z_0\vert$. 
We have to make sure that for $\sm{a & b \\ c & d} \in \GL_2(\LL)$, the situation
$\vert b^{-1}a - \lceil b^{-1}a\rceil \vert = 1$ cannot occur. 
Via the Dieudonn\'e determinant, one shows again that this would lead to a contradiction. 
\end{proof}

\begin{remark}
An alternative proof for \Cref{OL_is_GE2} could also be derived from \cite[Proposition 6]{Dennis}. 
\end{remark}

In \cite{abelianizationpaper}, it is proven that the group $\SL_2(\LL)$ has property \FA. This shows that $\SL_2(\LL)$ does not have a decomposition as a non-trivial free product with amalgamation. However we will now see that it has a subgroup of finite index which has a decomposition as an amalgamated product. \Cref{OL_is_GE2} shows that $\GE_2(\LL)$ is isomorphic to $\GL_2(\LL)$, which equals $\SL_2(\LL)$ (see \Cref{Dieudonneremark}). However the groups $\GE_2(\LL)$ and $\E_2(\LL)$ are not the same. Indeed, for example, the matrix $\sm{i & 0 \\ 0 & j}$ is in $\GE_2(\LL)$, but not in $\E_2(\LL)$. By \eqref{theorem difference GE2 and E2} the index of $\E_2(\LL)$ in $\GE_2(\LL)$ is the order of the abelianization of the unit group $\U(\LL)$. As $\U(\LL)$ is simply the quaternion group $Q_8$, the index is four. Hence we get the following corollary.

\begin{corollary}\label{finite index amalgamation in SL2L}
The group $\SL_2(\LL)$ has a subgroup of index four isomorphic to a non-trivial amalgamated product.
This subgroup is $\E_2(\LL) \cong \SL_+(\Gamma_4(\Z)) = \E_2(\Gamma_4(\Z))$.
\end{corollary}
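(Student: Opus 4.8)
The plan is to bolt the statement together from results already established in the paper, the only genuine work being the justification that the index formula \eqref{theorem difference GE2 and E2} applies to $R=\LL$. First I would identify $\SL_2(\LL)$ with $\GE_2(\LL)$: by \Cref{OL_is_GE2} the Lipschitz order $\LL$ is a $\GE_2$-ring, so $\GE_2(\LL)=\GL_2(\LL)$; and by \Cref{Dieudonneremark} the reduced norm of a matrix over an order in a totally definite rational quaternion algebra is a positive rational (equal to the square of the Dieudonné determinant, hence a positive integer on $\Ma_2(\LL)$), which forces $\GL_2(\LL)=\SL_2(\LL)$. Thus $\SL_2(\LL)=\GE_2(\LL)$, a group visibly containing the subgroup $\E_2(\LL)$, and the inclusion is proper since, e.g., $\sm{i & 0 \\ 0 & j}\in\GE_2(\LL)\setminus\E_2(\LL)$.

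Next I would compute the index $[\GE_2(\LL):\E_2(\LL)]$. For this I invoke \eqref{theorem difference GE2 and E2}, which yields $\GE_2(R)/\E_2(R)\cong\U(R)^{ab}$ as soon as the universal relations, together with a set $\Phi$ of relations living entirely in $\E_2(R)$, present $\GE_2(R)$. By the extension of \Cref{universalcomplete} discussed immediately after its proof — the argument proving \Cref{universalcomplete} goes through verbatim for quaternion orders admitting a $\Z$-basis of units, in particular for $\LL$ with its basis $1,i,j,k$ of units — the universal relations \emph{alone} present $\GE_2(\LL)$, so the hypothesis of \eqref{theorem difference GE2 and E2} is met (take $\Phi=\varnothing$). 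Since $\U(\LL)=Q_8$ and $Q_8^{ab}\cong C_2\times C_2$ has order $4$, we conclude $[\GE_2(\LL):\E_2(\LL)]=4$; equivalently, $\E_2(\LL)$ has index $4$ in $\SL_2(\LL)$.

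Finally I would recognise $\E_2(\LL)$ as a non-trivial amalgamated product. By \Cref{n3quat}, $\E_2(\LL)\cong\E_2(\Gamma_4(\Z))$; by \Cref{E2=SL+}, $\E_2(\Gamma_4(\Z))\cong\SL_+(\Gamma_4(\Z))$; and by \Cref{prop:E_2_O_L_is_amalgam} applied with $n=4$, this group decomposes non-trivially as a free product amalgamated over $\E_2(\Gamma_3(\Z))$. Stringing the three steps together gives a subgroup $\E_2(\LL)\cong\SL_+(\Gamma_4(\Z))=\E_2(\Gamma_4(\Z))$ of index $4$ in $\SL_2(\LL)$ that is a non-trivial amalgamated product, which is precisely the claim.

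The step I expect to be the main obstacle is the middle one: one must be certain that \eqref{theorem difference GE2 and E2} genuinely applies to $\LL$, i.e. that $\LL$ belongs to the class of (``almost universal'') rings for which the universal relations — supplemented only by relations internal to $\E_2$ — suffice to present $\GE_2$. This is exactly where the remark following \Cref{universalcomplete} does the heavy lifting (the proof of \Cref{universalcomplete} being carried out using only relations valid in $\E_2$, and remaining valid for quaternion orders with a basis of units); once that is granted, the rest is bookkeeping with the isomorphisms $\E_2(\LL)\cong\E_2(\Gamma_4(\Z))\cong\SL_+(\Gamma_4(\Z))$ and the amalgam of \Cref{prop:E_2_O_L_is_amalgam} established earlier.
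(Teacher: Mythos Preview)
Your proposal is correct and follows essentially the same approach as the paper: identify $\SL_2(\LL)=\GL_2(\LL)=\GE_2(\LL)$ via \Cref{OL_is_GE2} and \Cref{Dieudonneremark}, compute $[\GE_2(\LL):\E_2(\LL)]=|Q_8^{ab}|=4$ via \eqref{theorem difference GE2 and E2}, and then invoke \Cref{n3quat}, \Cref{E2=SL+} and \Cref{prop:E_2_O_L_is_amalgam} for the amalgam structure. One small simplification: you need not appeal to the extension of \Cref{universalcomplete} to justify \eqref{theorem difference GE2 and E2}, since \Cref{clifford cohn} already applies to orders in totally definite rational quaternion algebras and the extra relations \eqref{relation alpha} it supplies lie in $\E_2(\LL)$, so the hypothesis is met with $\Phi$ equal to those relations.
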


This result agrees with \cite[Lemma 5.3]{clifford}. There it is shown that $\SL_+(\Gamma_4(\Z))$ is of index four in a group, denoted $\SL_+(\tilde{\Gamma_4}(\Z))$, which is isomorphic to $\SL_2(\LL)$.

\section{A dichotomy for the unit group of an integral group ring}\label{sectiongroupring}

In the remainder of the paper we will make a step towards an answer to the virtual structure problem in case $\mathcal{G}$ is the class of groups that have decompositions as non-trivial amalgamated products. We will make use of decompositions in amalgamated products obtained in the previus section to get information about the structure of the unit group $\U (\Z G)$ of the integral group ring $\Z G$ of a finite group $G$. For this we consider actions of $\U( \Z G)$ on simplicial trees. Therefore we start with shortly reviewing property \FA. We recall some important results from \cite{abelianizationpaper} where we link property \FA to so-called exceptional simple algebras. This will settle the necessary background to prove  a dichotomy for $\U( \Z G)$, with $G$ a solvable cut group: $\U(\Z G)$ either has property \T (see definition below) or is commensurable with a non-trivial amalgamated product. Subsequently, we investigate, in the non-\T case, concrete realisations of a subgroup of finite index in $\U(\Z G)$ with a non-trivial amalgamation.

\subsection{Background}

\begin{definition}
A connected, undirected graph $T$ is said to be a \emph{simplicial tree} (or simply a \emph{tree}) if it contains no cycle graph as a subgraph. A group $\Gamma$ is said to have \emph{property \FA} if every action, without inversion, on a tree has a fixed point. 
\end{definition}

\begin{definition}
A countable discrete group $\Gamma$ is said to have \emph{Kazhdan's property \T} if and only if every affine isometric action of $\Gamma$ on a real Hilbert space has a fixed point.
\end{definition}

In this section we consider the case that $\Gamma= \U(\Z G)$, with $G$ a finite group. Recall that $\Z G$ is a $\Z$-order in the rational group algebra $\Q G$, a semi-simple algebra. By the celebrated Wedderburn-Artin Theorem, $\Q G$ is the direct product of simple algebras, i.e. $\Q G = \prod_{i=1}^m \Ma_{n_i}(D_i)$, where $m$ and $n_i$ are positive integers and each $D_i$ is a finite dimensional division algebra over $\Q$. Each simple component $\Ma_{n_i}(D_i)$ corresponds to a unique primitive central idempotent $e_i \in \Q G$ such that $\Q Ge_i \cong \Ma_{n_i}(D_i)$ (see \cite[Section 2.6]{PolSeh} for details). In \cite[Theorem 7.1]{abelianizationpaper}, it is shown that, for $\U (\Z G)$ property \T only depends on the form of the simple components $\Ma_{n_i}(D_i)$ of $\Q G$. Note that in fact the previous is shown for property \HFA (a group has property \HFA if every subgroup of finite index has property \FA). However in \cite[Corollary 7.4]{abelianizationpaper}, it is shown that for the unit group $\U(\Z G)$ of a finite group $G$, property \T and \HFA are equivalent. One of the main obstructions lies exactly in the following type of simple components.

\begin{definition}\label{definitie exceptional component}
Let $D$ be a finite dimensional division $\Q$-algebra. The algebra $\Ma_n(D)$ is called \emph{exceptional} if it is of one of the following types:
\begin{enumerate}
\item[(I)] a non-commutative division algebra other than a totally definite quaternion algebra over a number field,
\item[(II.a)] $\Ma_2(\Q(\sqrt{-d}))$ with $d \geq 0$,
\item[(II.b)] $\Ma_2\left(\qa{a}{b}{\Q}\right)$ with $a,b < 0$.
\end{enumerate}
\end{definition}

Note that this is equivalent with the definition given in \cite[Definition 6.6]{abelianizationpaper}.

The $2\times2$-matrix algebras $\Ma_2(D)$ of type (II.a) and (II.b) are exactly those where $D$ contains an order $\O$ with finite unit group (or equivalently, where any order in $D$ has finite unit group) \cite[Theorem~2.8]{abelianizationpaper}. 
Another condition for the unit group $\U(\Z G)$ to have property \T is that the group $G$ is cut. 

\begin{definition}
A finite group $G$ is said to be cut (central units trivial) if $\U( \mathcal{Z}( \Z G))$ is finite. 
\end{definition}

Indeed, in \cite[Corollary~6.3]{abelianizationpaper} we proved that a necessary condition for $\U(\Z G)$ to have finite abelianization is that the group $G$ is cut. It is well-known that property \T implies property \FA and hence also finite abelianization.

We now state the following version of \cite[Theorem~7.1]{abelianizationpaper} that encapsulates the information relevant here. Note that the original theorem is stated for property \HFA (which by \cite[Corollary 7.4]{abelianizationpaper} is equivalent to property \T) and also contains a group theoretical characterisation of property \HFA, in terms of forbidden quotients of $G$. 

\begin{theorem}[\cite{abelianizationpaper}]\label{iff HFA}
Let $G$ be a finite group. The following properties are equivalent
\begin{enumerate}
\item The group $\U(\Z G)$ has property \T.
\item $G$ is cut and $\mathbb{Q}G$ has no exceptional components.
\end{enumerate}
\end{theorem}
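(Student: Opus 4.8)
The plan is to obtain the theorem directly from the results of \cite{abelianizationpaper} recalled above, so the ``proof'' is essentially a repackaging. The two inputs I would use are \cite[Theorem~7.1]{abelianizationpaper}, which characterises property \HFA\ for $\U(\Z G)$ as the conjunction ``$G$ is cut'' and ``$\Q G$ has no exceptional component'', and \cite[Corollary~7.4]{abelianizationpaper}, which states that for finite $G$ the group $\U(\Z G)$ has property \T\ if and only if it has property \HFA. Combining the two yields the asserted equivalence immediately. The only bookkeeping point is to check that the notion of exceptional component in \Cref{definitie exceptional component} coincides with the one in \cite[Definition~6.6]{abelianizationpaper}, which is exactly the content of the remark immediately following \Cref{definitie exceptional component}.

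For a slightly more self-contained write-up I would split the equivalence into its two implications. For $(1)\Rightarrow(2)$: property \T\ implies property \FA\ \cite{BdlHV}, and an \FA\ group is finitely generated with no quotient isomorphic to $\Z$, so $\U(\Z G)^{ab}$ is finite; by \cite[Corollary~6.3]{abelianizationpaper} this already forces $G$ to be cut. Using \cite[Corollary~7.4]{abelianizationpaper} to pass from \T\ to \HFA\ and then \cite[Theorem~7.1]{abelianizationpaper}, we conclude that $\Q G$ has no exceptional component. For $(2)\Rightarrow(1)$: if $G$ is cut and $\Q G$ has no exceptional component, then \cite[Theorem~7.1]{abelianizationpaper} gives property \HFA\ for $\U(\Z G)$, and \cite[Corollary~7.4]{abelianizationpaper} upgrades this to property \T.

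There is no genuine obstacle here, since the hard work is carried out in \cite{abelianizationpaper}; all that is needed is to cite the correct statements and to reconcile the two descriptions of exceptional components. The conceptual reason behind the theorem is that $\U(\Z G)$ is commensurable with a direct product of arithmetic groups $\SL_1(\O_i)$ over orders $\O_i$ in the division algebras $D_i$ occurring in the Wedderburn decomposition $\Q G=\prod_i \Ma_{n_i}(D_i)$, and property \T\ for such a group fails precisely when $G$ is not cut or when some factor arises from a component of type (I) or of one of the types (II) identified in \cite[Theorem~2.8]{abelianizationpaper} as those over which every order has finite unit group. I would therefore present the theorem as a corollary of \cite[Theorem~7.1 and Corollary~7.4]{abelianizationpaper}, stated in the form convenient for the dichotomy of \Cref{dichotomy HFA-Amalgamated}.
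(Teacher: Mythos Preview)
Your proposal is correct and matches the paper's treatment exactly: the paper does not give an independent proof of this theorem but states it as a repackaged version of \cite[Theorem~7.1]{abelianizationpaper} combined with \cite[Corollary~7.4]{abelianizationpaper}, precisely as you do. The additional bookkeeping you mention (reconciling the two definitions of exceptional component) is also handled in the paper by the remark following \Cref{definitie exceptional component}.
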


\begin{remark}
There seems to be an interesting link between $\U(\Z G)$ having property \T and and a subgroup generated by certain generic units in having finite index in $\U(\Z G)$. In \cite[Theorem 3.3]{JesLea2}, the authors show that if $\Q G$ does not have exceptional components, then the group generated by the generalized bicyclic units and the Bass units (see \cite[Chapter 11]{EricAngel1} for more details) is of finite index in $\U (\Z G)$. Thus, by the previous theorem, if $\U(\Z G)$ does have property \T, then the latter subgroup is of finite index in $\U(\Z G)$. Moreover, often, if $\Q G$ has exceptional components, then the subgroup generated by the generalized bicyclic units and the Bass units does not have finite index in $\U(\Z G)$. 
\end{remark}

We are interested to know what structure $\U(\Z G)$ still has when it does not have property \T. Due to \Cref{iff HFA} one might expect that the answer will depend on the precise exceptional components of $\Q G$. Therefore we summarize in the following theorem several known results that show that the possible simple algebras arising as an exceptional component of $\Q G$ are of a very limited shape, especially if $G$ is a cut group. 

\begin{theorem}\label{possible exceptional components}
Let $G$ be a finite cut group and $e$ a primitive central idempotent of $\Q G$ such that $\Q G e$ is exceptional. Then we have the following.
\begin{enumerate}
\item If $\Q Ge$ is of type $(II.a)$, then $d \in \{ 0, -1, -2 , -3 \}$.
\item If $\Q Ge$ is of type $(II.b)$, then $(a,b) \in \{ (-1,-1),( -1, -3), (-2,-5) \}$.
\item If $G$ is cut  and $\Q Ge \cong \Ma_2\left(\qa{-1}{-3}{\Q}\right)$ or $\Q Ge \cong \Ma_2(\Q(\sqrt{-2}))$ then there exists another primitive central idempotent $e'$ such that $\Q G e' \cong \Ma_2(\Q)$ or $\Q Ge' \cong \Ma_2(\Q(i))$.
 \item There exists a primitive central idempotent $e$ of $\mathbb{Q}G$ such that $\mathbb{Q}Ge \cong \Ma_2\left(\qa{-2}{-5}{\Q}\right)$ if and only if $G$ maps onto $G_{240,90}$.
\item If $G$ is solvable and cut, then $\Q Ge \ncong \Ma_2\left(\qa{-2}{-5}{\Q}\right)$.
\item If $G$ is cut, then $\Q G e$ cannot be of type $(I)$. 
\end{enumerate}
\end{theorem}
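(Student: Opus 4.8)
The statement collects facts that are individually available in the literature or follow from it by a short reduction; the plan is to assemble them while keeping the \emph{cut} hypothesis in play. The reformulation used throughout is that $G$ is cut precisely when every character field $\Q(\chi)$, $\chi\in\operatorname{Irr}(G)$, equals $\Q$ or an imaginary quadratic field — this is Dirichlet's unit theorem applied to the centres $\ZZ(\Q Ge_i)$ of the Wedderburn components of $\Q G$. Items (1) and (2) then follow from the classification of the division algebras $D$ for which $\Ma_2(D)$ occurs as an exceptional component of a rational group algebra: for type (II.a) this forces $\Q(\sqrt{-d})\in\{\Q,\Q(i),\Q(\sqrt{-2}),\Q(\sqrt{-3})\}$ (a degree-$2$ character of Schur index $1$ has a very restricted field of values), and for type (II.b) it forces $D$ to be one of the three totally definite rational quaternion algebras $\qa{-1}{-1}{\Q}$, $\qa{-1}{-3}{\Q}$, $\qa{-2}{-5}{\Q}$ recalled in Section~\ref{matquat}. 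I would cite these classifications, together with their cut refinement, rather than reprove them.

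For (6) the plan is to invoke the classification of finite subgroups of division rings. If some $\Q Ge$ were a non-commutative division algebra that is not a totally definite quaternion algebra over a number field, then a quotient of $G$ embeds in $D^{\times}$ and $\Q$-spans $D$; in the (Amitsur-type) list of such finite groups, the binary polyhedral and generalised quaternion groups only produce totally definite quaternion algebras over number fields, which are excluded by hypothesis, while the remaining (metacyclic) families fail to be cut, the normal cyclic subgroup having an element $g$ conjugate to $g^{k}$ for some $k\not\equiv\pm1$ modulo its order. Either way the cut hypothesis is contradicted. Together with (1) and (2), this is the routine part of the theorem.

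Items (4) and (5) go together. For (4), a component $\Ma_2\!\left(\qa{-2}{-5}{\Q}\right)$ corresponds to a character $\chi\in\operatorname{Irr}(G)$ of degree $2$ with $\Q(\chi)=\Q$ and Schur index $2$ whose local invariants single out this quaternion algebra; passing to $\bar G=G/\ker\chi$, the group $\bar G$ carries the component faithfully, hence embeds into $\GL_2(\C)$ via the corresponding representation with cyclic centre of order at most $2$ (the central scalars being roots of unity in $\Q$). So $\bar G$ is one of finitely many groups, and the classification of finite subgroups of $\GL_2(\C)$ together with a Schur-index computation leaves a single possibility, which one verifies — e.g. in \textsf{GAP} — to be $G_{240,90}$. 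Since the simple components of $\Q(G/N)$ are among those of $\Q G$, and conversely the minimal quotient realising the component is $G_{240,90}$, the equivalence follows. Part (5) is then immediate: $G_{240,90}$ is non-solvable (it has $A_5$ as a composition factor), so a solvable group cannot surject onto it and, by (4), cannot have this component.

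The item carrying real content is (3), and it is where I expect the work to concentrate. The plan is to replace $G$ by a minimal quotient $\bar G=G/N$ for which $\Ma_2\!\left(\qa{-1}{-3}{\Q}\right)$, respectively $\Ma_2(\Q(\sqrt{-2}))$, already appears; exactly as in (4), such $\bar G$ has a faithful irreducible $2$-dimensional representation with the prescribed character field and Schur index, hence lies in a short explicit list of subgroups of $\GL_2(\C)$. For each group on that list one computes the Wedderburn decomposition of $\Q\bar G$ and checks that a component $\Ma_2(\Q)$ or $\Ma_2(\Q(i))$ is present — equivalently, by \Cref{remark over wnr M-2(Q) component}, that $\bar G$, hence $G$, surjects onto $S_3$ or $D_8$. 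The main obstacle is making this finite case analysis both correct and exhaustive, in particular excluding the possibility that a cut group realises $\Ma_2\!\left(\qa{-1}{-3}{\Q}\right)$ or $\Ma_2(\Q(\sqrt{-2}))$ without simultaneously realising one of the two target components; for this I would rely on, and quote where possible, the corresponding analysis in \cite{abelianizationpaper}.
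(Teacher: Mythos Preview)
Your proposal is essentially aligned with the paper's own treatment, which is not a proof at all but a list of citations: items (1) and (2) are attributed to \cite[Theorems~3.1 and~3.5]{EKVG}, and the remaining items to \cite[Propositions~6.10 and~6.14]{abelianizationpaper}. What you have written is a reasonable sketch of the arguments that presumably lie behind those references --- the cut reformulation via character fields, the Amitsur classification for (6), and the minimal-quotient analysis via finite subgroups of $\GL_2(\C)$ for (3)--(5). In that sense you are doing \emph{more} than the paper, not less, and the overall plan is sound.

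One slip to correct: in your discussion of (3) you write that the presence of a component $\Ma_2(\Q)$ or $\Ma_2(\Q(i))$ is ``equivalently, by \Cref{remark over wnr M-2(Q) component}, that $\bar G$, hence $G$, surjects onto $S_3$ or $D_8$.'' That remark only characterises the $\Ma_2(\Q)$ case; it says nothing about $\Ma_2(\Q(i))$. So the ``equivalently'' is wrong as stated, and the $\Ma_2(\Q(i))$ alternative in the conclusion of (3) must be handled separately in the case analysis (or you must argue that the relevant minimal quotients in fact always realise $\Ma_2(\Q)$, which is a stronger claim than (3) asserts). Since you already plan to fall back on the explicit analysis in \cite{abelianizationpaper} for this item, this is easy to repair, but the sentence as written overclaims.
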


Item (1) and (2) are proven in \cite[Theorem 3.1]{EKVG} and  \cite[Theorem 3.5]{EKVG}, respectively. The three other items follow from \cite[Propositions 6.14 and 6.10]{abelianizationpaper}.

\subsection{The Dichotomy}
In the following theorem we will be interested in when the group $E_2(\O)$ is a non-trivial amalgamated product, for $\O$ some order in $\Q(\sqrt{-d})$ and $d \geq 0$ a square-free integer. Recall that we denote the maximal order of $\Q(\sqrt{-d})$ by $\mathcal{I}_{d}$. It is shown in \cite[Section 6.5, Exercise 5]{Serre} or in \cite[Theorem 5.1]{abelianizationpaper} that $\SL_2(\mathcal{I}_3)$ has property \FA. This property is in contradiction with being an amalgamated product. This is however not the case, as we will show in the proof of the following result, if one considers the order $\Z\left[\sqrt{-3}\right]$ which is a suborder of the ring of integers $\mathcal{I}_3$.
Set $B'_2(R) = \E_2(R) \cap B_2(R)$, for any ring $R$, where $\B_2(R)$ denotes the Borel subgroup of $\GL_2(R)$, i.e. the group of invertible upper triangular $2\times 2$-matrices.

\begin{theorem}\label{Zsqrt3amalgam}
Let $\mathcal{O}$ be an order in $\mathbb{Q}(\sqrt{-d})$ with $d \geq 0$ square-free. Then  $\E_2(\mathcal{O})$ has a decomposition as a non-trivial amalgamated product if and only if $\O \neq \mathcal{I}_3$. 
Moreover, if $\mathcal{O}$ is different from $\mathbb{Z}[\sqrt{-3}]$ and also from $\mathcal{I}_d$ with $d \in \lbrace 1,2,3,7,11 \rbrace$, then 
\begin{equation*}
\E_2(\mathcal{O}) \cong \E_2(\mathbb{Z}) \ast_{B'_2(\mathbb{Z})} B'_2(\O).
\end{equation*}
\end{theorem}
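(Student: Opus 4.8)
The plan is to reduce everything to a presentation-level computation, following the same philosophy as in the proof of \Cref{prop:E_2_O_L_is_amalgam}: find generators and relations for $\E_2(\O)$ and split the generating set into three disjoint pieces to which \Cref{disjoint generators amalgam} applies. First I would dispose of the $\GE_2$-ring cases ($\O = \mathcal{I}_d$ with $d \in \{1,2,3,7,11\}$ and $\O = \Z[\sqrt{-3}]$) separately, since for those rings $\E_2(\O)$ has its own small presentation coming from \Cref{clifford cohn} (here \eqref{relation alpha} is nonempty) and the structure is already well documented in the literature; in particular for $\O = \mathcal{I}_3$ one invokes the known property \FA of $\SL_2(\mathcal{I}_3)$ (cited above, \cite[Section 6.5, Exercise 5]{Serre} or \cite[Theorem 5.1]{abelianizationpaper}), together with $\E_2(\mathcal{I}_3) = \SL_2(\mathcal{I}_3) = \GL_2(\mathcal{I}_3)$ since $\mathcal{I}_3$ is Euclidean, to conclude $\E_2(\mathcal{I}_3)$ has no non-trivial amalgamated decomposition, while for $\Z[\sqrt{-3}]$ one must produce an explicit amalgamation (this is the ``surprise'' mentioned in the introduction) — I expect this to be handled by exhibiting a concrete presentation of $\E_2(\Z[\sqrt{-3}])$ and splitting it, perhaps realizing it as an index-$2$ subgroup phenomenon inside $\SL_2(\mathcal{I}_3)$ and identifying an invariant splitting.

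For the generic case, the key observation is that when $\O \neq \mathcal{I}_d$ ($d \in \{1,2,3,7,11\}$) and $\O \neq \Z[\sqrt{-3}]$, the ring $\O$ has no elements of norm strictly between $1$ and $2$ in the relevant sense (equivalently, writing $\O = \Z[\omega]$ with $\omega = \sqrt{-d}$ or $\omega = \frac{1+\sqrt{-d}}{2}$, the minimal nonzero norm of a non-unit is at least $2$, and there is no non-real element $a$ with $1 < |a| < 2$), so that \eqref{relation alpha} in \Cref{clifford cohn} is vacuous and the universal relations alone present $\GE_2(\O)$, hence (by the $\E_2$-part of that proposition) $\E_2(\O)$ is presented by \eqref{R1}, \eqref{R2}, \eqref{R3'}, \eqref{R4} together with the relations of $\DE_2(\O)$. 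Since $\U(\O) = \{\pm 1\}$ for these orders, $\DE_2(\O)$ is generated just by $D(-1) = E(0)^2$, so the presentation collapses: $\E_2(\O)$ is generated by $E(0)$ and the $E(\gamma)$ for $\gamma$ ranging over a $\Z$-basis $\{1,\omega\}$ of $\O$, subject only to the (R1)-type commuting/translation relations, the single relation making $E(0)^2$ central of order dividing $4$ (actually order $2$ after identifying with $D(-1)$), and \eqref{R2} for $\mu = -1$. The plan is then to mimic \Cref{firstabstractpresent}/\Cref{presentationOL}: rewrite the generators as $j = -I$, $a = E(0)$, translation matrices $T_1 = \sm{1 & 1 \\ 0 & 1}$, $T_\omega = \sm{1 & \omega \\ 0 & 1}$, and the order-three element $c = T_1 a$, obtaining a presentation in which the generators partition into $A = \{j, a, c\}$ (the ones generating a copy of $\E_2(\Z)$), the singleton $B \setminus C = \{T_\omega\}$, and $C = \{j, a, c\}$, with all relations involving $T_\omega$ being commutations $[T_1, T_\omega] = I$ (and possibly $[a, \text{something}] $) that live in $B \cup C$, and no relation mixing a ``bare'' $T_\omega$ with the rest except through $C$. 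Applying \Cref{disjoint generators amalgam} then yields $\E_2(\O) \cong G_{AC} \ast_{G_C} G_{BC}$ with $G_{AC} \cong \E_2(\Z)$ (generated by $j, a, c$) and $G_C \cong \B'_2(\Z)$ (the upper triangular part of $\E_2(\Z)$, generated by $T_1$ and $j$), and $G_{BC} = \B'_2(\O)$ since adjoining $T_\omega$ to $\B'_2(\Z)$ generates exactly the elementary upper triangular matrices over $\O$.

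The main obstacle, and the step requiring genuine care, is verifying that the relation set really splits cleanly as ``relations on $A \cup C$'' plus ``relations on $B \cup C$'' with no residual relation mixing $T_\omega$ outside $C$ — in particular one must check that the relation \eqref{R3'} (the conjugation $E(x)D(\mu) = D(\mu^{-1})E(\mu x \mu)$) for $\mu = -1$, when rewritten, does not entangle $T_\omega$ with $c$ in a way that escapes $C$, and that there are genuinely no sporadic relations of type \eqref{relation alpha} for the orders under consideration. The latter is a finite norm-computation: one must confirm, for $\O \neq \Z[\sqrt{-3}], \mathcal{I}_{1,2,3,7,11}$, that $\O$ contains no element $a$ with $1 < |a|^2 = m < 4$ that would force an extra relation — for $\O = \Z[\sqrt{-d}]$ with $d \geq 3$ the smallest non-unit norms are $d$ (too big unless $d = 2,3$) or involve $1 \pm \sqrt{-d}$ of norm $1+d \geq 4$, and for $\O = \mathcal{I}_d = \Z[\frac{1+\sqrt{-d}}{2}]$ with $d \equiv 3 \pmod 4$, $d \geq 15$, the element $\frac{1+\sqrt{-d}}{2}$ has norm $\frac{1+d}{4} \geq 4$, so indeed no such $a$ exists; this is exactly where the excluded list $\{1,2,3,7,11\} \cup \{3\}$ (for $\Z[\sqrt{-3}]$) comes from. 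Once this norm bookkeeping is pinned down and the disjointness of the three generator-blocks is confirmed, the amalgamation is immediate from \Cref{disjoint generators amalgam}, and the ``only if'' direction (non-amalgamation exactly for $\mathcal{I}_3$) follows from the property \FA argument sketched above combined with the fact that for all other orders we have just exhibited an explicit non-trivial amalgamated decomposition.
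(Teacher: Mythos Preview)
Your approach for the generic case (orders that are neither $\GE_2$-rings nor $\mathcal{I}_3$) is sound and amounts to a hands-on reproof of \cite[Lemma~2.1]{Menal}, which the paper simply cites: for $\O$ discretely normed (hence universal for $\GE_2$ by \Cref{clifford cohn}) with $\U(\O) = \{\pm 1\}$, one gets $\E_2(\O) \cong \E_2(\Z) \ast_{B'_2(\Z)} B'_2(\O)$ directly. Your split has a notational muddle, however: you write $A = C = \{j,a,c\}$, which violates the pairwise disjointness required in \Cref{disjoint generators amalgam} and cannot simultaneously yield $G_{AC} \cong \E_2(\Z)$ and $G_C \cong B'_2(\Z)$. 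The correct partition (using generators $j, a, T_1, T_\omega$ and discarding the redundant $c = T_1 a$) is $A = \{a\}$, $B = \{T_\omega\}$, $C = \{j, T_1\}$; then indeed $G_{AC} = \E_2(\Z)$, $G_C = B'_2(\Z)$, $G_{BC} = B'_2(\O)$, and the only relations touching $T_\omega$ reduce to $[T_1, T_\omega] = 1$ together with centrality of $j$, so the separation is clean.

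The real gap is $\O = \Z[\sqrt{-3}]$. Your suggestions there do not constitute a proof: the index-$2$ inclusion into the \FA group $\SL_2(\mathcal{I}_3)$ gives no constructive handle on an amalgamation, and while ``exhibit a presentation and split it'' is in principle feasible (the remark following the theorem confirms this, referring to \cite{Doryanthesis}), you do not carry it out, and it is genuinely harder than the generic case because $\Z[\sqrt{-3}]$ is \emph{not} discretely normed --- the element $\sqrt{-3}$ has $|\sqrt{-3}|^2 = 3$, forcing the extra relation $(E(-\sqrt{-3})E(\sqrt{-3}))^3 = E(0)^2$ of type \eqref{relation alpha}, which mixes $T_\omega$ with $a$ and destroys the clean $A$/$B$/$C$ separation above. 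The paper's actual argument for this case takes a completely different and much slicker route: the \emph{additive} (non-multiplicative) map $\varphi\colon \Z[\sqrt{-3}] \to \mathcal{I}_{11}$ sending $a + b\sqrt{-3} \mapsto a + b\,\tfrac{1+\sqrt{-11}}{2}$ happens to preserve all defining relations of \Cref{clifford cohn} and therefore induces an epimorphism $\overline{\varphi}\colon \E_2(\Z[\sqrt{-3}]) \twoheadrightarrow \E_2(\mathcal{I}_{11})$; since the target is already known to be a non-trivial amalgamated product, \Cref{amalgamconserved} pulls the decomposition back. This non-ring-homomorphism trick is the missing idea.
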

\begin{proof}
All the orders in our context come along with a norm $|\cdot|$. One calls $\O$ discretely normed if there exists no elements $x\in \O$ with $1 < |x| < 2$. If $\O$ is discretely normed, then, by \Cref{clifford cohn}, $\O$ is universal for $\GE_2$ and consequently \cite[Lemma 2.1.]{Menal}  yields that
\begin{equation}\label{E2O amalgam}
\E_2(\mathcal{O}) \cong \E_2(S) \ast_{B'_2(S)} B'_2(\O),
\end{equation}
where $S$ is the subring of $\O$ generated by $\U (\O)$.
Now if $\O$ is an order in $\Q$ or $\Q(\sqrt{-d})$ with $d>0$ a square-free integer, then a direct computation shows that $\O$ is discretely normed exactly when $\O$ is either $\Z$ or not a $\GE_2$-ring (i.e. $\O$ is not $\Z[\sqrt{-3}]$ or $\mathcal{I}_d$ with $d=1,2,3,7,11$ \cite[Theorem 3]{Dennis}).
For these rings $\O$, it is also not difficult to see that $\U(\O) \cong \{\pm 1 \}$ and hence $S = \Z$.
This proves the second part of the statement.

To prove the first part, we will use \eqref{E2O amalgam}. The decomposition \eqref{E2O amalgam} is non-trivial if $\O$ does not have a $\Z$-basis consisting of units. This is indeed the case for all orders satisfying \eqref{E2O amalgam} except for $\O = \Z$. Hence for all these orders different from $\Z$,  $\Z[\sqrt{-3}]$ and $\mathcal{I}_d$ for $d \in \lbrace 1,2,3,7,11 \rbrace$, $\E_2(\O)$ has a decomposition as a non-trivial amalgamated product. Note that the orders $\Z$,  $\Z[\sqrt{-3}]$ and $\mathcal{I}_d$ for $d \in \lbrace 1,2,3,7,11 \rbrace$ are all $\GE_2$-rings. Moreover, one can show that for $R$ such a ring, $\E_2(R) = \SL_2(R)$ (using \eqref{theorem difference GE2 and E2}).
If $R=\Z$ or $R=\mathcal{I}_d$ for $d=2,7,11$, then an amalgamation of $\SL_2(R)$ was obtained in \cite{Hatcher} and for $d=1$, \Cref{SL_2L_2isamalgam} also provides an amalgamation. To prove that $\E_2(\Z[\sqrt{-3}])$ has an amalgamated decomposition, we will take a completely different route.

Consider the the additive map $\varphi: \Z[\sqrt{-3}] \rightarrow \mathcal{I}_{11}$ defined by $\varphi (a +b \sqrt{-3}) = a + b \left( \frac{1+ \sqrt{-11}}{2} \right)$. This induces an epimorphism $\overline{\varphi}$ from $\E_2(\Z[\sqrt{-3}])$ to  $\E_2( \mathcal{I}_{11})$ by defining $\overline{\varphi}(E(x)) = E(\varphi(x))$. To check that $\overline{\varphi}$ is well-defined it is enough to prove that the defining relations from \Cref{clifford cohn} are preserved, but this is a straightforward calculation.
As $\E_2(\mathcal{I}_{11})$ is itself a non-trivial amalgamated product, by \Cref{amalgamconserved},  $\E_2(\Z[\sqrt{-3}])$ is also an amalgamated product.
\end{proof}

\begin{remark}
Note that applying similar methods as in \Cref{finite presentation clifford} and \ref{mainsection}, the exact decomposition as an amalgamated product of $\PSL_2(\Z(\sqrt{-3}])$ can be obtained.
The group $\PSL_2(\Z[\sqrt{-3}])$ has the following decomposition as a non-trivial amalgamated product:
$$\PSL_2(\Z[\sqrt{-3}]) = G_1 \ast_H G_2,$$
where 
$$G_1 = \langle a,m,u \mid a^2=m^2=(am)^3=1, m=u^{-1}au\rangle$$
and
$$G_2 = \langle u,s,v \mid s^3=v^3=(sv^{-1})^3=1, v=u^{-1}su\rangle$$
and $H = \langle u, am \rangle \cong \langle u, sv^{-1} \rangle$.
For more details, we refer to \cite{Doryanthesis}.
\end{remark}

With these results at hand, we are ready to prove the following dichotomy.

\begin{theorem}\label{dichotomy HFA-Amalgamated}
Let $G$ be a finite cut group, which is solvable or $5 \nmid \vert G \vert$. Then, exactly one of the following properties holds:
\begin{enumerate}
\item $\mathcal{U}(\mathbb{Z}G)$ has property \T,
\item $\mathcal{U}(\mathbb{Z}G)$ is commensurable with a non-trivial amalgamated product.
\end{enumerate}
\end{theorem}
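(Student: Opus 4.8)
\emph{At most one of the two alternatives holds.} I would first record two standard facts: property \T\ passes to and from finite-index subgroups, hence is a commensurability invariant, and a group with property \T\ has property \FA\ (see \cite{BdlHV}). A non-trivial amalgamated product $A\ast_C B$ acts on its Bass--Serre tree with no global fixed vertex -- such a vertex would force $A\ast_C B$ to be conjugate to $A$ or to $B$, i.e.\ $C=A$ or $C=B$ -- so it fails \FA, hence fails \T. Therefore any group commensurable with a non-trivial amalgamated product fails property \T, and (1) and (2) cannot both hold.

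\emph{At least one holds: isolating a usable component.} Suppose (1) fails. Since $G$ is cut, \Cref{iff HFA} forces $\Q G$ to have an exceptional component $\Q Ge\cong\Ma_n(D)$; by \Cref{possible exceptional components}(6) it is not of type (I), so $n=2$. By \Cref{possible exceptional components}(1)--(2), $D$ is one of $\Q$, $\Q(\sqrt{-d})$ with $d\in\{1,2,3\}$, $\qa{-1}{-1}{\Q}$, $\qa{-1}{-3}{\Q}$ or $\qa{-2}{-5}{\Q}$. The case $\qa{-2}{-5}{\Q}$ is excluded by \Cref{possible exceptional components}(5) if $G$ is solvable, and by \Cref{possible exceptional components}(4) if $5\nmid|G|$, since then $G$ cannot surject onto the group $G_{240,90}$ of order $240$. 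Finally, by \Cref{possible exceptional components}(3), if $D\in\{\qa{-1}{-3}{\Q},\Q(\sqrt{-2})\}$ then $\Q G$ also has a component $\Ma_2(\Q)$ or $\Ma_2(\Q(i))$; replacing $e$ by that idempotent, I may assume $\Q G$ has a Wedderburn component $\Ma_2(D)$ with $D\in\{\Q,\Q(i),\Q(\sqrt{-3}),\qa{-1}{-1}{\Q}\}$.

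\emph{Assembling the amalgam.} Because $G$ is cut, the centre of every simple component of $\Q G$ is $\Q$ or imaginary quadratic, so on each component the reduced norm of the unit group of an order has finite image; a standard argument then shows $\U(\Z G)$ is commensurable with $\prod_i\SL_{n_i}(\O_i)$, where $i$ runs over the Wedderburn components and $\O_i$ is any order in $D_i$ -- and I am free to choose the $\O_i$. By \Cref{amalgamdirectproduct} the direct product of a non-trivial amalgamated product with any group is again a non-trivial amalgamated product, so it suffices to pick, for the component $\Ma_2(D)$ above, an order $\O$ in $D$ with $\SL_2(\O)$ commensurable with a non-trivial amalgamated product. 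For $D=\Q$ take $\SL_2(\Z)=C_4\ast_{C_2}C_6$; for $D=\Q(i)$ take $\SL_2(\Z[i])\cong\E_2(\Gamma_2(\Z))$, a non-trivial amalgam by \Cref{prop:E_2_O_L_is_amalgam}; for $D=\Q(\sqrt{-3})$ take $\O=\Z[\sqrt{-3}]$, a $\GE_2$-ring, so $\SL_2(\Z[\sqrt{-3}])=\E_2(\Z[\sqrt{-3}])$ is a non-trivial amalgam by \Cref{Zsqrt3amalgam}; and for $D=\qa{-1}{-1}{\Q}$ observe that $\SL_2(\O)$ is commensurable with $\SL_2(\LL)$, which by \Cref{finite index amalgamation in SL2L} has a finite-index subgroup isomorphic to $\E_2(\LL)\cong\SL_+(\Gamma_4(\Z))=\E_2(\Gamma_4(\Z))$, a non-trivial amalgam by \Cref{prop:E_2_O_L_is_amalgam}. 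In every case $\U(\Z G)$ is commensurable with $\SL_2(\O)\times\prod_{i\ne i_0}\SL_{n_i}(\O_i)$, hence with a non-trivial amalgamated product, establishing (2).

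\emph{Main obstacle.} The bookkeeping above is routine; the delicate point is the exceptional components that are $2\times2$-matrix rings over a $\GE_2$-ring -- most notably $\Ma_2(\Q(\sqrt{-3}))$, and also $\Ma_2(\Q(\sqrt{-2}))$ and $\Ma_2(\qa{-1}{-3}{\Q})$ -- because for these the obvious arithmetic group, namely $\SL_2$ of the maximal order, has property \FA\ and so is \emph{not} itself an amalgam. One sidesteps this either by passing to a commensurable suborder whose elementary group does split (the $\Z[\sqrt{-3}]$ trick, which rests on \Cref{Zsqrt3amalgam}) or by trading the component for another via \Cref{possible exceptional components}(3). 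The quaternionic case $\qa{-1}{-1}{\Q}$ is precisely where the first half of the paper is indispensable, through the chain $\SL_2(\LL)\sim\E_2(\LL)=\E_2(\Gamma_4(\Z))=\SL_+(\Gamma_4(\Z))$ together with \Cref{prop:E_2_O_L_is_amalgam}.
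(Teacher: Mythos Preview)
Your proof is correct and follows essentially the same route as the paper's: reduce via \Cref{iff HFA} and \Cref{possible exceptional components} to a component $\Ma_2(D)$ with $D\in\{\Q,\Q(i),\Q(\sqrt{-3}),\qa{-1}{-1}{\Q}\}$, then invoke the amalgam results of Sections~5 and~6 together with \Cref{amalgamdirectproduct} and commensurability of orders. The only cosmetic differences are that you handle the $5\nmid|G|$ case via the $G_{240,90}$ quotient rather than the ramification argument the paper uses, and you explicitly pass to the suborder $\Z[\sqrt{-3}]$ for $D=\Q(\sqrt{-3})$, whereas the paper fixes the maximal order $\mathcal{I}_3$ and then implicitly descends to a finite-index subgroup when citing \Cref{Zsqrt3amalgam}.
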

\begin{proof}
Let $\Q G = \prod_{i \in I} \Ma_{n_i}(D_i)$ be the Wedderburn-Artin decomposition of $\Q G$, with $I$ some index set. Split $I = I_1 \cup I_2$  in such a way that $I_1$ contains all the indices $i$ such that $\Ma_{n_i}(D_i)$ is exceptional and $I_2$ contains those indices that correspond to non-exceptional components.

Since \T is a property of a commensurability class, it suffices to prove that if (1) is false, (2) holds. So suppose that $G$ is a solvable cut group and $\mathcal{U}(\mathbb{Z}G)$ does not have property \T.  As $G$ is cut, \Cref{iff HFA} implies that $I_1 \neq \emptyset$. So, by \Cref{possible exceptional components}, there exists $i_0 \in I_1$ such that $\Ma_{n_{i_0}}(D_{i_0}) = \Ma_2(D_{i_0})$ is an exceptional component of type (II.a) or (II.b). Furthermore, by \Cref{possible exceptional components}, we may assume, without loss of generality that $D_{i_0} \ncong \qa{-1}{-3}{\Q}$ and $D_{i_0} \ncong \Q(\sqrt{-2})$. If $G$ is solvable, then by \Cref{possible exceptional components}, $D_{i_0} \ncong \qa{-2}{-5}{\Q}$. If $5 \nmid \vert G \vert$, then, as explained in the proof of \cite[Theorem 3.5]{EKVG}, if $D_0$ ramifies at the prime $p$, then $p$ also divides $|G|$. Since $\qa{-2}{-5}{\Q}$ ramifies at $5$, it cannot appear as a simple component.   Hence the possibilities for $D_{i_0}$ are $\Q$, $\Q(i)$, $\Q(\sqrt{-3})$ or the classical quaternion algebra $\qa{-1}{-1}{\Q}$. In all these cases $D_{i_0}$ has a unique, up to conjugation, maximal order (for the commutative case this is just the ring of integers; for the quaternion algebra we refer to \cite{CeChLez}) which we denote by $\OO_{i_0}$. In view of \cite[(21.6), page~189]{Reiner} this yields that $\Ma_2(\OO_{i_0})$ is also up to conjugation the unique maximal order in $\Ma_2(D_{i_0})$. Since the reduced norm has image in a maximal order in $\ZZ(D_{i_0})$, which has finite unit group in all relevant cases, $\SL_2(\O_{i_0})$ has finite index in $\GL_2(\O_{i_0})$. Thus by \Cref{SL_2L_2isamalgam}, \Cref{Zsqrt3amalgam} and \Cref{finite index amalgamation in SL2L}, respectively, $\GL_2(\OO_{i_0})$ contains a subgroup of finite index, say $H$, which has a non-trivial decomposition as amalgamated product. Finally, \Cref{amalgamconserved} yields that $H \times \prod_{i\in I\setminus \lbrace i_0 \rbrace} \GL_{n_i}(\OO_i)$ inherits from $H$ a non-trivial decomposition as amalgamated product. As orders have commensurable unit group \cite[Lemma~4.6.9]{EricAngel1}, the latter finishes the proof.
\end{proof}

\begin{remark}\label{H5 excluded}
The property of $G$ being solvable or $5 \nmid |G|$ only matters to make sure that $D_{i_0} \ncong \qa{-2}{-5}{\Q}$ (see \Cref{possible exceptional components}). Therefore the same theorem holds for every finite group $G$ for which $\Q G$ does not have $\Ma_2\left(\qa{-2}{-5}{\Q}\right)$ as a simple component. 

This obstruction is due to the fact that we do not know whether there is a subgroup of finite index in $\GL_2(\O) = \GE_2(\O)$, for $\O$ the unique (up to conjugation) maximal order in $\qa{-2}{-5}{\Q}$,  that has a decomposition as a non-trivial amalgamated product. Hence the condition of $G$ being solvable in the previous theorem.
\end{remark}

\begin{question}\label{que:O5}
Does $\GL_2(\O)$, for $\O$ the unique maximal order in $\qa{-2}{-5}{\Q}$ have a decomposition as amalgamated product? Is there a subgroup of finite index in $\GL_2(\O)$ that has a decomposition as an amalgamated product?
\end{question}

Recall that we defined the concepts of reduced norm and $\SL_1$ for a subring of a central simple algebra in \Cref{matquat}. In what follows, we will frequently need the notion $\SL_1(R)$ for $R$ a subring in a semisimple $\Q$-algebra $A$. Let $A = \prod \Ma_{n_i}(D_i)$ be the Wedderburn-Artin decomposition of $A$ and $h_i$ the projections onto the $i$-th component. Then $$\SL_1(R):=\{a \in R\ \mid \ \forall\ i \colon \operatorname{RNr}_{\Ma_{n_i}(D_i)/\ZZ(D_i)}(h_i( a)) = 1\}.$$ 

The next natural question is whether it is possible, in case $\U (\Z G)$ is non-\T, to find a concrete realisation of this amalgamated product inside $\U (\Z G)$, i.e.\ can we describe a subgroup $H$ of finite index in $\U (\Z G)$ which is in a non-trivial way an amalgamated product.  We expect that $H= \SL_1 (\Z G)$ is always such a realisation.

\begin{question}\label{conjecture concrete amalgam}
Let $G$ be a finite group. Are the following properties equivalent?
\begin{enumerate}
\item $\U(\mathbb{Z}G)$ is, up to commensurability, a non-trivial amalgamated product.
\item  $\SL_1(\mathbb{Z}G)$ is a non-trivial amalgamated product
\end{enumerate}
\end{question}
If $G$ is a finite cut group, then $\SL_1(\Z G)$ is of finite index in $\U (\Z G)$ \cite[Proposition~5.5.1]{EricAngel1} and hence if $\Q G$ has no exceptional components, then $\SL_1( \Z G)$ has property \T by \Cref{dichotomy HFA-Amalgamated}. Therefore, from now on, we may assume the existence of an exceptional component $\Ma_2(D_{exc})$ whose form is given by \Cref{possible exceptional components}. We want to determine a precise subgroup of finite index in $\SL_2(\O_{exc})$, where $\O_{exc}$ is an order in $D_{exc}$, with a non-trivial amalgamated product and retract it to a subgroup of finite index in $\SL_1(\Z G)$. We start by considering the case  $\Ma_2(D_{exc}) = \Ma_2(\Q)$ which is a very generic case, as can be seen by inspecting \cite[Appendix A]{abelianizationpaper}.

\begin{proposition}\label{amalgamation due to M_2(Q)}
Let $A$ be a finite dimensional semisimple $\mathbb{Q}$-algebra and $\OO$ an order in $A$. Suppose $\Ma_2(\mathbb{Q})$ is a Wedderburn-Artin component of $A$ and let $H$ be a subgroup of finite index in  $\SL_1(\O)$. Then $H$ has a non-trivial amalgamated decomposition.
\end{proposition}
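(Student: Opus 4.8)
The plan is to reduce to the case $H = \SL_2(\Z)$ (up to commensurability) and then invoke the amalgamation $\SL_2(\Z) \cong C_4 \ast_{C_2} C_6$ together with \Cref{amalgamdirectproduct}. First I would observe that since $\Ma_2(\Q)$ is a Wedderburn--Artin component of $A$, we may write $A \cong \Ma_2(\Q) \times A'$ for a complementary semisimple algebra $A'$, and correspondingly the reduced norm decomposes over the components. By \cite[(21.6), page~189]{Reiner} (or the fact that any two orders in $A$ are commensurable, \cite[Lemma~4.6.9]{EricAngel1}), the group $\SL_1(\O)$ is commensurable with $\SL_2(\Z) \times \SL_1(\O')$, where $\O'$ is an order in $A'$; indeed the projection of $\SL_1(\O)$ to the $\Ma_2(\Q)$-component is, up to finite index, $\SL_2(\Z)$, and $\SL_1(\O)$ is virtually the direct product of its projections since a sublattice of $\O$ of the form $\O_1 \times \O'$ is again an order.

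Next, since $H$ has finite index in $\SL_1(\O)$, it also has finite index in $\SL_2(\Z) \times \SL_1(\O')$. Now $\SL_2(\Z)$ is the non-trivial amalgamated product $C_4 \ast_{C_2} C_6$ (see \cite[I.4.2 Example c]{Serre}), so by \Cref{amalgamdirectproduct} the direct product $\SL_2(\Z) \times \SL_1(\O')$ is again a non-trivial amalgamated product. Finally I would apply the standard fact that a subgroup of finite index in a non-trivial amalgamated product (equivalently, a group with a non-trivial action on a tree without a global fixed point, hence without property \FA) again has such an action, and being finitely generated it therefore splits non-trivially as an amalgamated product or HNN extension. To stay within amalgamated products one can argue via Bass--Serre theory: a finite-index subgroup of $G_1 \ast_C G_2$ acts on the same Bass--Serre tree; the action is cocompact and the quotient graph is finite; if the quotient graph is a tree one gets an amalgam decomposition directly, and the (a priori possible) HNN pieces can be absorbed — more cleanly, one notes $\SL_2(\Z) \times \SL_1(\O')$ is virtually $F_2 \times \SL_1(\O')$ (since $\SL_2(\Z)$ is virtually free non-abelian), so $H$ is virtually a non-trivial free-by-(finite-index) product, and in particular $H$ itself, being finite index in a group with a non-trivial amalgam decomposition whose vertex groups are finite, contains a free subgroup of finite index modulo $\SL_1(\O')$; then applying \Cref{amalgamdirectproduct} to a finite-index subgroup of the free factor realised inside $H$ yields the splitting of $H$.

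The main obstacle is the passage from ``$H$ has finite index in a non-trivial amalgamated product'' to ``$H$ is itself a non-trivial amalgamated product'': finite-index subgroups of amalgamated products need not be amalgamated products on the nose (they may be HNN extensions or more complicated graphs of groups). The cleanest fix I would use is that $\SL_2(\Z)$ is virtually free of rank $\geq 2$, so $\SL_2(\Z) \times \SL_1(\O')$ has a finite-index subgroup isomorphic to $F_k \times \SL_1(\O')$ with $k \geq 2$; intersecting with $H$ we get a finite-index subgroup $H_0 \leq H$ of this form, which is a non-trivial amalgamated product by \Cref{amalgamdirectproduct} (writing $F_k = F_{k-1} \ast F_1 \supseteq$ a non-trivial amalgam, e.g. $F_k \cong F_1 \ast_{\{1\}} F_{k-1}$). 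This shows $H$ contains a finite-index non-trivial amalgamated product, hence $H$ has no property \FA; and then one uses that a finitely generated group acting on a tree with finite edge stabilisers and no fixed point is itself a non-trivial amalgamated product or HNN extension, and rules out the HNN case by a rank/Euler-characteristic count coming from the product structure with $\SL_1(\O')$. I expect the authors' actual argument to be shorter, most likely citing \cite[Lemma 2.1.]{Menal}-style results or directly the fact that $\SL_2(\Z)$ virtually surjects onto $F_2$ and then quoting \Cref{amalgamconserved}.
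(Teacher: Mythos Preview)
Your setup is correct and you correctly isolate the main difficulty: passing from ``$H$ has finite index in a non-trivial amalgamated product'' to ``$H$ is itself a non-trivial amalgamated product''. However, none of your proposed workarounds closes this gap. The $F_k \times \SL_1(\O')$ argument only produces a finite-index subgroup of $H$ that splits, not $H$ itself; your claim about ``finite edge stabilisers'' is false, since every edge stabiliser on the Bass--Serre tree of $\Gamma = (C_4 \ast_{C_2} C_6) \times \SL_1(\O')$ contains the infinite group $\SL_1(\O')$; and the appeal to a ``rank/Euler-characteristic count'' to exclude HNN extensions is not substantiated and does not obviously go through.

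The key idea you are missing, and which the paper uses, is that the amalgamated subgroup $U = C_2 \times \SL_1(\O')$ is \emph{normal} in $\Gamma$ (the $C_2$ is central in $\SL_2(\Z)$ and $\SL_1(\O')$ is a direct factor). Hence one can form the quotient $\Gamma/U \cong \PSL_2(\Z) \cong C_2 \ast C_3$, a genuine \emph{free} product. The image $H/(H\cap U)$ has finite index in $\Gamma/U$; since $C_2$ and $C_3$ have infinite index in the infinite group $C_2\ast C_3$, the Kurosh subgroup theorem forces $H/(H\cap U)$ to be a non-trivial free product. Now apply \Cref{amalgamconserved} to the epimorphism $H \twoheadrightarrow H/(H\cap U)$ to pull this back to a non-trivial amalgamated decomposition of $H$ itself. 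This is exactly the move that bypasses the HNN issue: Kurosh for free products yields free products (never HNN extensions), and \Cref{amalgamconserved} preserves amalgams under pullback along surjections.
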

\begin{proof}
Let $A = \prod_{i \in I} \Ma_{n_i}(D_i)$ be the Wedderburn-Artin decomposition and $\{ e_i \mid i \in I \}$ the associated system of primitive central idempotents. For the remainder of the proof, fix $i_0 \in I$ such that $Ae_{i_0} \cong \Ma_2(\Q)$. Recall that $\O e_i$ is an order in $A e_i$ for all $i$. Further, it is well known that $\Ma_2(\Z)$ is up to conjugation the unique maximal order of $\Ma_2(\Q)$. 

Assume that $H$ is a subgroup of finite index in $\SL_1(\O)$. By definition, $\SL_1(\O)$ can be viewed as a subgroup of $\Gamma = \SL_2(\Z) \times \prod_{i \in I \setminus \{ i_0 \} } \SL_1(\O e_i)$. By \cite[Lemma 4.6.6, 4.6.9 and Proposition 5.5.1]{EricAngel1} one readily proves that $\SL_1(\O)$, hence also $H$, has finite index in $\Gamma$. By a classical result (or the results in \Cref{mainsection}), $\SL_2(\Z) \cong C_4 \ast_{C_2} C_6$ with $C_2 = \langle c \rangle$ a central and hence normal subgroup. By \Cref{amalgamconserved} we get that $\Gamma = B \ast_U D$, with $B \cong C_4 \times \prod_{i \in I \setminus \{ i_0 \} } \SL_1( \O e_i)$, $D \cong C_6 \times \prod_{i \in I \setminus \{ i_0 \} } \SL_1( \O e_i)$ and $U \cong C_2 \times \prod_{i \in I \setminus \{ i_0 \} } \SL_1( \O e_i)$. In particular, $U$ is normal in $B$ and $D$. Now consider $\Gamma/U \cong B/U \ast D/U$ and its subgroup of finite index $H / (H\cap U)$.
It is easy to show that $B/U$ and $D/U$ are of infinite index in $\Gamma/U$ and by the Kurosh subgroup theorem, $H/ (H\cap U)$ is a non-trivial free product. Hence, by \Cref{amalgamconserved}, $H$ has a non-trivial amalgamated decomposition.
\end{proof}

We do not know if the previous proposition stays true when replacing $\SL_1(\O)$ by $\U(\O)$. Nevertheless, the following presentation of $\GL_2(\Z)$ is well-known (for instance see \cite{CoxMos}).
$$\GL_2(\Z) = \langle x,y,z \mid x^2 = y^2 = z^2 = 1, (xy)^3 =(xz)^2, (xz)^4 = 1 \rangle.$$
By \Cref{disjoint generators amalgam}, $\GL_2(\Z)$ has a non-trivial decomposition as an amalgamated product and hence it makes sense asking the question if $\SL_1(\O)$ could be replaced more generally by $\U(\O)$.

\begin{corollary}\label{subgroups of SL_2(Z) are amalgam}
Every finite index subgroup of $\SL_2(\Z)$ is a non-trivial amalgamated product.
\end{corollary}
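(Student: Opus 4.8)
The plan is to obtain this statement as a direct instance of \Cref{amalgamation due to M_2(Q)}. Take $A=\Ma_2(\Q)$ and $\O=\Ma_2(\Z)$, which is an order in $A$. Then $A$ is its own (unique) Wedderburn--Artin component, so the hypothesis of \Cref{amalgamation due to M_2(Q)} is met, and by the conventions fixed in \Cref{matquat} one has $\SL_1(\O)=\SL_2(\Z)$. Consequently every subgroup of finite index in $\SL_2(\Z)$ has a non-trivial decomposition as an amalgamated product, which is exactly the assertion. So at the level of citing earlier results, there is essentially nothing to do.

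For a self-contained argument one simply unwinds the proof of \Cref{amalgamation due to M_2(Q)} in this special case. First recall, from \Cref{prop:E_2_O_L_is_amalgam} (the case $n=1$), equivalently from Serre \cite[I.4.2 Example c)]{Serre}, that $\SL_2(\Z)\cong C_4\ast_{C_2}C_6$ with amalgamated subgroup $U=\langle -I\rangle\cong C_2$, which is central in $\SL_2(\Z)$, hence normal in both factors. Let $H\le\SL_2(\Z)$ have finite index. Passing to the quotient yields $\SL_2(\Z)/U\cong\PSL_2(\Z)\cong C_2\ast C_3$, and the images of $H$ and of $U$ present $H/(H\cap U)$ as a finite-index subgroup of the free product $C_2\ast C_3$.

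The next step is to argue that $H/(H\cap U)$ is itself a non-trivial free product. By the Kurosh subgroup theorem every subgroup of $C_2\ast C_3$ is a free product of a free group and of conjugates of subgroups of $C_2$ and of $C_3$; such a decomposition is trivial only if the subgroup is trivial, infinite cyclic, or conjugate into $C_2$ or $C_3$. Since $C_2\ast C_3$ contains a non-abelian free subgroup of finite index, so does every finite-index subgroup, and in particular $H/(H\cap U)$ is neither finite nor virtually cyclic; hence its Kurosh decomposition is non-trivial. Finally, $H$ is an extension of $H/(H\cap U)$ by the normal subgroup $H\cap U\le U\cong C_2$, so \Cref{amalgamconserved}, applied to the quotient map $H\twoheadrightarrow H/(H\cap U)$ and the common subgroup $H\cap U$ of the two preimage factors, transports the non-trivial amalgamated decomposition of $H/(H\cap U)$ back to $H$.

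The only point requiring any real care is the non-triviality of the Kurosh decomposition in the third step (which in turn rests on $\PSL_2(\Z)$ being virtually non-abelian free); all the rest is bookkeeping. And because the whole statement is a corollary of \Cref{amalgamation due to M_2(Q)}, there is in fact no genuine obstacle at all — the displayed route above merely reproduces the relevant portion of that proposition's proof in the case $A=\Ma_2(\Q)$.
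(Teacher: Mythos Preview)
Your proposal is correct and takes essentially the same approach as the paper: the paper's proof is precisely your first paragraph, applying \Cref{amalgamation due to M_2(Q)} to $A=\Ma_2(\Q)$ and $\O=\Ma_2(\Z)$ and noting $\SL_1(\O)=\SL_2(\Z)$. Your subsequent unwinding faithfully specializes the proof of that proposition (passing to $\PSL_2(\Z)\cong C_2\ast C_3$, invoking Kurosh, and pulling back via \Cref{amalgamconserved}), with only a cosmetic difference in how non-triviality of the Kurosh decomposition is justified.
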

\begin{proof}
This follows from \Cref{amalgamation due to M_2(Q)} applied to $A = \Ma_2(\Q)$ and $\O = \Ma_2(\Z)$.
In that case $\SL_1(\O) = \SL_2(\Z)$.
\end{proof}

In case $A = \Q G$ the condition in \Cref{amalgamation due to M_2(Q)} can be reformulated in terms of $G$.

\begin{corollary} \label{amalgam voor QG als M_2(Q) component}
Let $G$ be a finite group having $D_8$ or $S_3$ as an epimorphic image. If $H$ is a subgroup of finite index in $\SL_1(\Z G)$, then $H$ is a non-trivial amalgamated product. Moreover, $\U(\Z G)$ is virtually an amalgamated product. 
\end{corollary}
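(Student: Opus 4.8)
The plan is to reduce the statement to the presence of $\Ma_2(\Q)$ as a Wedderburn--Artin component of $\Q G$ and then to invoke \Cref{amalgamation due to M_2(Q)}. The first step is to record that $G$ has $D_8$ or $S_3$ as an epimorphic image if and only if $\Ma_2(\Q)$ occurs as a simple component of $\Q G$; this is exactly \Cref{remark over wnr M-2(Q) component}. The ``if'' direction is the content of that remark, while the ``only if'' direction is immediate: $\Q[D_8]\cong\Q^4\times\Ma_2(\Q)$ and $\Q[S_3]\cong\Q^2\times\Ma_2(\Q)$, so a surjection $G\twoheadrightarrow D_8$ or $G\twoheadrightarrow S_3$ induces a surjection of rational group algebras and hence exhibits $\Ma_2(\Q)$ as a Wedderburn--Artin component of $\Q G$.

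For the first assertion I would then apply \Cref{amalgamation due to M_2(Q)} with $A=\Q G$ and $\OO=\Z G$: since $\Ma_2(\Q)$ is a Wedderburn--Artin component of $\Q G$, every subgroup $H$ of finite index in $\SL_1(\Z G)$ has a non-trivial amalgamated decomposition. No further work is needed for this part.

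For the ``moreover'' part I would pass to an order that splits the $\Ma_2(\Q)$--factor off completely. Writing $\Q G\cong\Ma_2(\Q)\times B$ with $B$ semisimple and fixing any $\Z$-order $\Gamma_0$ of $B$, the subring $\Gamma:=\Ma_2(\Z)\times\Gamma_0$ is a $\Z$-order of $\Q G$, so $\U(\Z G)$ is commensurable with $\U(\Gamma)=\GL_2(\Z)\times\U(\Gamma_0)$, because orders have commensurable unit groups \cite[Lemma~4.6.9]{EricAngel1}. By the presentation of $\GL_2(\Z)$ recalled above (from \cite{CoxMos}) together with \Cref{disjoint generators amalgam}, the group $\GL_2(\Z)$ is a non-trivial amalgamated product, and then the direct-product statement of \Cref{amalgamconserved} gives that $\GL_2(\Z)\times\U(\Gamma_0)=\U(\Gamma)$ is again a non-trivial amalgamated product. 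Hence $\U(\Z G)$ is commensurable with, and so virtually, a non-trivial amalgamated product.

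The reason one cannot obtain the ``moreover'' statement by feeding $\U(\Z G)$ directly into \Cref{amalgamation due to M_2(Q)} is that that proposition is formulated for finite-index subgroups of $\SL_1(\OO)$ and not of the full unit group; passing from $\SL_1$ to $\U$ is precisely the open issue discussed around \Cref{subgroups of SL_2(Z) are amalgam} and \Cref{que:O5}. The detour above avoids it because on the order $\Gamma$ the $\Ma_2(\Q)$--factor contributes all of $\GL_2(\Z)$, which (unlike a generic $\SL_1$) visibly splits as a non-trivial amalgam via its explicit presentation, and the remaining direct factor is then harmless by \Cref{amalgamconserved}. I expect the only slightly delicate point to be the bookkeeping required to rewrite the presentation of $\GL_2(\Z)$ from \cite{CoxMos} into the disjoint-generating-set form demanded by \Cref{disjoint generators amalgam} --- the relation $(xy)^3=(xz)^2$ couples all three standard generators, so one first rewrites it, e.g.\ as an amalgam of the dihedral groups of orders $8$ and $12$ over their common Klein four-subgroup --- but this is routine and is in any case already subsumed in the discussion preceding \Cref{subgroups of SL_2(Z) are amalgam}.
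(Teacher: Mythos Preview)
Your treatment of the first assertion coincides with the paper's: one shows that the hypothesis forces $\Ma_2(\Q)$ to occur as a Wedderburn--Artin component of $\Q G$ and then applies \Cref{amalgamation due to M_2(Q)} with $A=\Q G$ and $\O=\Z G$.

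For the ``moreover'' part you take a different route. The paper does not change orders at all: it uses that $\langle \SL_1(\Z G),\ \mathcal{Z}(\U(\Z G))\rangle$ always has finite index in $\U(\Z G)$ \cite[Proposition~5.5.1]{EricAngel1}, notes that $\SL_1(\Z G)$ is already a non-trivial amalgam by the first part, and then observes that adjoining the central subgroup $\mathcal{Z}(\U(\Z G))$ preserves the non-trivial amalgam structure (ultimately via \Cref{amalgamconserved}). This produces an honest finite-index \emph{subgroup} of $\U(\Z G)$ that is a non-trivial amalgam, and it never touches the presentation of $\GL_2(\Z)$. Your argument instead passes to the product order $\Gamma=\Ma_2(\Z)\times\Gamma_0$ and invokes the amalgam decomposition of $\GL_2(\Z)$ together with \Cref{amalgamconserved}; this is also correct, but what you obtain in the end is a group \emph{commensurable} with $\U(\Z G)$ that is a non-trivial amalgam, rather than a finite-index subgroup of $\U(\Z G)$ itself. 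Given that the corollary says ``virtually'' (and the paper's proof honours this by exhibiting an actual finite-index subgroup), the paper's route is both shorter and slightly sharper; your approach would still need the extra step of showing that the common finite-index subgroup $\U(\Z G)\cap\U(\Gamma)$ inherits a non-trivial amalgam decomposition, which is not automatic from commensurability alone.
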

\begin{proof}
Let $N$ be a normal subgroup of $G$ such that $G/N \cong D_8$ or $S_3$. It is well known (see for example \cite[Proposition 3.6.7]{PolSeh}) that $\Q G = \Q G e_N \times \Q G (1-  e_N)$, where $e_N = \frac{1}{|N|} \sum_{n \in N} n$ the (central) idempotent associated to $N$, and $\Q G e_N \cong \Q G/N$. Now, since $D_8$ and $S_3$ have an irreducible $\Q$-representation of degree $2$, $\Q G/N$ has $\Ma_2(\Q)$ as a simple component. Hence, by the above, so does $\Q G$. The first part of the result now follows from \Cref{amalgamation due to M_2(Q)}.

For the second part, note that $\langle \SL_1(\U (\Z G)), \mathcal{Z}(\U(\Z G)) \rangle$ is always of finite index in $\U(\Z G)$ (see for example \cite[Proposition 5.5.1]{EricAngel1}). As $\SL_1(\Z G)$ is a a non-trivial amalgamated product and $\mathcal{Z}(\U(\Z G))$ is central in $\langle \SL_1(\U (\Z G)), \mathcal{Z}(\U(\Z G)) \rangle$, the whole group $\langle \SL_1(\U (\Z G)), \mathcal{Z}(\U(\Z G)) \rangle$ is a non-trivial amalgamated product. 
\end{proof}

\begin{remark}\label{remark over wnr M-2(Q) component}
As follows from the proof, $\Ma_2(\Q)$ is a simple component of $\Q G$ if and only if $G$ contains a normal subgroup $N$ such that $G/N$ faithfully embeds in $\GL_2(\Q)$ and it is non-abelian. Since the only finite non-abelian subgroups of $\GL_2(\Q)$ are the dihedral groups $D_{2n}$ of order $2n$ for $n \in \{3, 4, 6\}$, it follows that $\Q G$ has a $\Ma_2(\Q)$ as component if and only if $D_8$ or $D_6 \cong S_3$ is an epimorphic image of $G$.
\end{remark}

Now we consider the other components in \Cref{possible exceptional components}. Unfortunately, such a strong result as in \Cref{amalgamation due to M_2(Q)} is impossible to obtain for $D_{exc} = \Q(\sqrt{-3})$ or $\qa{-1}{-1}{\Q}$ since in \cite{abelianizationpaper} it is shown that $\SL_2(\I_3)$ and $\SL_2(\O_2)$ are \FA. Here $\O_2$ denotes the maximal order in $\qa{-1}{-1}{\Q}$ having $\{1, i,j,\frac{1+i+j+k}{2} \}$ as a basis (this order is also called Hurrwitz order).  
In contrast, when $\D_{exc} = \Q(i)$, then $\SL_2(\Z[i]) \cong \SL_+(\Gamma_2(\Z))$ is an amalgamated product.
As such we know that every subgroup of finite index does not have \FA, but we're unsure whether it has an amalgamated decomposition or an infinite abelianization.

\begin{question}\label{question SL_2}
Is every finite index subgroup of $\SL_2(\Z [i])$ a non-trivial amalgamated product?
\end{question}

For the last proposition, we recall the definition of an HNN extension. Let $\Gamma$ be a group with presentation $\langle S \mid R \rangle$, $H_1$ and $H_2$ be two isomorphic subgroups of $\Gamma$ and  $\theta: H_1 \rightarrow H_2$ an isomorphism. Let $t \not \in \Gamma$ be a new element and $\langle t \rangle$ a cyclic group of infinite order. The \textit{HNN extension} of $\Gamma$ relative to $H_1$, $H_2$ and $\theta$ is the group 
$$\langle S, t \mid R, tgt^{-1}=\theta(g), g\in H_1 \rangle.$$

By classical Bass-Serre theory, a finitely generated group is an HNN extension if and only if it has infinite abelianization, i.e.\ it maps onto $\Z$. Thus a finitely generated group has property \FA exactly when it is neither an HNN extension nor a non-trivial amalgam.

The following proposition gives a concrete subgroup of $\SL_1(\Z G)$ that has a decomposition as an amalgamated product or an HNN extension. Note that the indices of the groups $G_{32, 50}$, $G_{96,202}$ and $G_{384, 618}$ appearing in the proposition indicate their \textsc{SmallGroup ID}s in the Small Group library of \textsf{GAP} \cite{GAP}. For a presentation of the groups, we refer to \cite[Appendix B]{abelianizationpaper}.

\begin{proposition}\label{prop:am_HNN}
Let $G$ be a finite cut group such that $G$ does not map onto $$\SL(2,3),\quad C_3 \times Q_8,\quad G_{32, 50},\quad G_{96,202}, \quad G_{240, 90} \quad \mbox{ or }\quad G_{384, 618}.$$ Suppose that $\U(\Z G)$ does not have property \T. Then any subgroup of finite index in $\SL_1(\Z G)$ has a non-trivial decomposition as amalgamated product or is an HNN extension.
\end{proposition}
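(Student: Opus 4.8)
The plan is to combine \Cref{iff HFA} with the classification \Cref{possible exceptional components} to cut the problem down to one of two concrete matrix groups, and then to exploit the amalgam decompositions of the previous sections together with a hyperbolic-element argument on a Bass--Serre tree. Since $G$ is cut and $\U(\Z G)$ lacks property \T, \Cref{iff HFA} forces $\Q G$ to have an exceptional component, which by \Cref{possible exceptional components}(6) cannot be of type (I); so there is a primitive central idempotent $e_0$ with $\Q G e_0 \cong \Ma_2(D_{exc})$, and by \Cref{possible exceptional components}(1)--(2) the division algebra $D_{exc}$ lies in the finite list $\Q,\ \Q(\sqrt{-1}),\ \Q(\sqrt{-2}),\ \Q(\sqrt{-3}),\ \qa{-1}{-1}{\Q},\ \qa{-1}{-3}{\Q},\ \qa{-2}{-5}{\Q}$. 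I would then prune this list down to $\{\Q,\ \Q(\sqrt{-1})\}$. A $\Ma_2(\qa{-2}{-5}{\Q})$ component forces $G \twoheadrightarrow G_{240,90}$ by \Cref{possible exceptional components}(4), which is excluded; a $\Ma_2(\qa{-1}{-3}{\Q})$ or $\Ma_2(\Q(\sqrt{-2}))$ component forces, by \Cref{possible exceptional components}(3), an additional component $\Ma_2(\Q)$ or $\Ma_2(\Q(\sqrt{-1}))$ of $\Q G$; and -- the one new ingredient, to be established exactly like the other items of \Cref{possible exceptional components}, by a finite computer-assisted inspection of cut groups -- a $\Ma_2(\Q(\sqrt{-3}))$ or $\Ma_2(\qa{-1}{-1}{\Q})$ component likewise forces a $\Ma_2(\Q)$ or $\Ma_2(\Q(\sqrt{-1}))$ component, unless $G$ maps onto one of $\SL(2,3)$, $C_3 \times Q_8$, $G_{32,50}$, $G_{96,202}$, $G_{384,618}$. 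Hence, after replacing $e_0$ if necessary, we may assume $\Q G e_0 \cong \Ma_2(\Q)$ or $\Ma_2(\Q(\sqrt{-1}))$.

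\textbf{The tree-action argument.} Write $\Q G = \prod_i \Ma_{n_i}(D_i)$, fix orders $\OO_i$ in the $D_i$, and let $\pi_{e_0}\colon \Q G \to \Q G e_0$ be the projection. Because $G$ is cut, $\SL_1(\Z G)$ has finite index in $\prod_i \SL_{n_i}(\OO_i)$ by \cite[Lemma~4.6.9 and Proposition~5.5.1]{EricAngel1}; in particular $\SL_1(\Z G)$, hence any finite-index subgroup $H \le \SL_1(\Z G)$, is finitely generated. Moreover $K := \pi_{e_0}(H)$ has finite index in $\pi_{e_0}(\SL_1(\Z G)) = \SL_1(\Z G e_0)$, and $\Z G e_0$ is an order in $\Ma_2(D_{exc})$, hence contained -- up to conjugation -- in a maximal order, which for $D_{exc} \in \{\Q, \Q(\sqrt{-1})\}$ is conjugate to $\Ma_2(\Z)$, resp.\ $\Ma_2(\Z[\sqrt{-1}])$, since $\Z$ and $\Z[\sqrt{-1}]$ are principal ideal rings. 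So, after conjugating, $K$ is a finite-index subgroup of $\SL_2(\Z) = \E_2(\Gamma_1(\Z))$, resp.\ of $\SL_2(\Z[\sqrt{-1}]) \cong \E_2(\Gamma_2(\Z))$. By \Cref{prop:E_2_O_L_is_amalgam} each of these groups is a non-trivial amalgamated product $A_0 \ast_{C_0} B_0$; picking $a \in A_0 \setminus C_0$ and $b \in B_0 \setminus C_0$, the element $ab$ has infinite order and translates along the Bass--Serre tree $T$ of this splitting, on which the whole group acts without inversion. As $K$ has finite index it contains a power $(ab)^m$, $m \ge 1$, which is again hyperbolic on $T$; therefore $K$ acts on $T$ without global fixed point, so $K$ does not have property \FA. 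Since $H$ surjects onto $K$ and property \FA passes to quotients, $H$ does not have property \FA either; being finitely generated, $H$ is thus a non-trivial amalgamated product or an HNN extension, by the Bass--Serre dichotomy recalled just before the statement. (When $\Q G e_0 \cong \Ma_2(\Q)$ one may instead invoke \Cref{amalgamation due to M_2(Q)} with $A = \Q G$, $\OO = \Z G$ to obtain directly that $H$ is in fact a non-trivial amalgamated product.)

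\textbf{The main obstacle.} The only non-routine step is the pruning in the first part, i.e.\ the assertion that for cut groups avoiding the six displayed quotients one always finds a $\Ma_2(\Q)$ or $\Ma_2(\Q(\sqrt{-1}))$ component -- the two algebras whose maximal-order $2\times 2$ matrix groups are the non-trivial amalgams of \Cref{prop:E_2_O_L_is_amalgam}, hence carry hyperbolic elements on their Bass--Serre trees. This restriction is genuinely needed: the algebras $\Ma_2(\Q(\sqrt{-3}))$, $\Ma_2(\qa{-1}{-1}{\Q})$ and $\Ma_2(\qa{-2}{-5}{\Q})$ are realised over the orders $\mathcal{I}_3$, $\OO_2$ and the maximal order of $\qa{-2}{-5}{\Q}$, whose $\SL_2$-groups have property \FA (and it is even open whether all of their finite-index subgroups split, cf.\ Questions \ref{que:O5} and \ref{question SL_2}), and for the excluded $G$ the group $\SL_1(\Z G)$ is assembled entirely from such \FA pieces -- for $G = \SL(2,3)$, for instance, $\SL_1(\Z G)$ is commensurable with $\SL_3(\Z) \times \SL_2(\mathcal{I}_3)$, whose factors both have property \FA -- so no tree-action argument is available and the statement as formulated fails for those $G$. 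Establishing the pruning thus amounts to extending the analysis behind \Cref{possible exceptional components}(3)--(4) to the algebras $\Ma_2(\Q(\sqrt{-3}))$ and $\Ma_2(\qa{-1}{-1}{\Q})$; everything else -- finite generation of arithmetic unit groups, commensurability of unit groups of orders \cite[Lemma~4.6.9]{EricAngel1}, conjugacy of maximal orders over a principal ideal ring, existence of hyperbolic elements in non-trivial amalgams, and \Cref{amalgamconserved} -- is standard.
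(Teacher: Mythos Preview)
Your proof is correct and follows essentially the same route as the paper's. Two points of comparison are worth noting.

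First, the ``main obstacle'' you flag --- the pruning of the components $\Ma_2(\Q(\sqrt{-3}))$ and $\Ma_2\bigl(\qa{-1}{-1}{\Q}\bigr)$ --- is in the paper not a new computer check but a direct citation to \cite[Appendix~A]{abelianizationpaper}, which records precisely the if-and-only-if statements: $G$ maps onto $\SL(2,3)$ or $C_3\times Q_8$ exactly when $\Ma_2(\Q(\sqrt{-3}))$ occurs but $\Ma_2(\Q)$ does not, and $G$ maps onto $G_{32,50}$, $G_{96,202}$ or $G_{384,618}$ exactly when $\Ma_2\bigl(\qa{-1}{-1}{\Q}\bigr)$ occurs but $\Ma_2(\Q)$ does not. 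So the reduction to $D_{exc}\in\{\Q,\Q(i)\}$ is identical, and your honest acknowledgement of this step as the non-routine one is accurate.

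Second, for the tree-action part the paper argues more tersely: it forms $\Gamma=\SL_2(\OO_{i_0})\times\prod_{i\neq i_0}\SL_1(\Z G e_i)$, invokes \Cref{amalgamconserved} to see $\Gamma$ is itself a non-trivial amalgam, and then asserts that finite-index subgroups of $\Gamma$ fail \FA. Your version --- project $H$ onto the $e_0$-component, exhibit a hyperbolic element $(ab)^m$ in the image, and pull back the failure of \FA via the quotient map --- is a more explicit justification of that same assertion (and in fact clarifies why the paper's passage from ``$\Gamma$ fails \FA'' to ``every finite-index subgroup of $\Gamma$ fails \FA'' is legitimate here, a step that is not automatic in general). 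Either way one lands at: $H$ is finitely generated and fails \FA, hence is a non-trivial amalgam or an HNN extension.
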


\begin{proof}
Let $\Q G = \prod_{i \in I} \Ma_{n_i}(D_i)$ be the decomposition into simple algebras and let $\{ e_i \mid i \in I \}$ be the associated system of primitive central idempotents. By \Cref{iff HFA,possible exceptional components} there exists $i_0 \in I$ such that $\Q G e_{i_0} = \Ma_2(D_{i_0})$ is an exceptional component of type (II.a) or (II.b). Again by  \Cref{possible exceptional components}, we may assume that $D_{i_0} \in \{ \Q, \Q(i), \Q(\sqrt{-3}), \qa{-1}{-1}{\Q}\}$ (since $G$ does not map onto $G_{240, 90}$, by assumption,  $D_{i_0} \not\cong \qa{-2}{-5}{\Q}$). Now from \cite[Appendix~A]{abelianizationpaper} we can deduce:
\begin{enumerate}
\item $G$ maps onto $\SL(2,3)$ or onto $C_3 \times Q_8$ if and only if $\Ma_2(\Q(\sqrt{-3}))$ is a component of $\Q G$, but $\Ma_2(\Q)$ is not.
\item $G$ maps onto $G_{32, 50}, G_{96,202}$ or onto $G_{384, 618}$ if and only if $\Ma_2\left(\qa{-1}{-1}{\Q}\right)$ is a component, but not $\Ma_2(\Q)$.
\end{enumerate}
Thus we may further assume that $D_{i_0} = \Q$ or $D_{i_0} = \Q(i)$. Let $\O_{i_0}$ be a maximal order in $D_{i_0}$ (which is unique up to conjugation).  By \Cref{SL_2L_2isamalgam}, $\SL_2(\O_{i_0})$ is non-trivially an amalgamated product. Set $\Gamma := \SL_2(\O_{i_0}) \times \prod_{i \in I \setminus \{ i_0 \} } \SL_1(\Z G e_i)$. This is again an amalgamated product and hence $\Gamma$ does not have property \FA. Thus every finite index subgroup of $\Gamma$ does not have property \FA. As explained before the proposition, this implies that every subgroup of finite index in $\Gamma$ is an HNN extension or a non-trivial amalgamated product. As $\SL_1(\Z G)$ has finite index in $\Gamma$, the result follows. \end{proof}

The group $G_{240,90}$ in the previous proposition is excluded as epimorphic image of $G$ just to prevent the occurrence of $\Ma_2\left(\qa{-2}{-5}{\Q}\right)$ as a Wedderburn-Artin component of $\Q G$. If \Cref{que:O5} has a positive answer, excluding this component and hence also the epimorphic image $G_{240, 90}$ would no longer be necessary. Unfortunately, this component cannot be handled in the same way as $\Ma_2\left(\qa{-1}{-3}{\Q}\right)$, since $\Ma_2\left(\qa{-2}{-5}{\Q}\right)$ is the unique type of exceptional components of type (II) in the cut group $G_{240, 90}$.

\vspace{1cm}

\noindent \textbf{Acknowledgement.} We would like to thank Yves de Cornulier for some helpful insights on amalgamated products. 

\appendix

\chapter{A non-exotic example of a quaternion order that is not universal for $\GE_2$}\label{appendix}

Let $\O$ be an order in a division algebra, equipped with a norm function $|\cdot|$. Recall that $\O$ is said to be discretely normed if there exists no elements $x\in \O$ with $1 < |x| < 2$. It is well-known that the maximal orders in imaginary quadratic extensions of $\Q$ are either discretely normed or Euclidean. If they are discretely normed, then by \Cref{clifford cohn}, they are universal for $\GE_2$, i.e. the universal relations form a complete set of relations for $\GE_2(\O)$. In \cite{Cohn2}, it is shown that among the Euclidean maximal orders, only the two orders having a basis consisting of units, i.e. $\mathcal{I}_{1} \subseteq \Q(\sqrt{-1})$ and $\mathcal{I}_3 \subseteq \Q(\sqrt{-1})$, are universal for $\GE_2$. In this Appendix, we show that a similar result holds for the Euclidean orders in totally definite rational quaternion algebras.

By \cite{Fitz} the only totally definite rational quaternion algebras having a right Euclidean order are $\qa{-1}{-1}{\Q}$,$\qa{-1}{-3}{\Q}$ and $\qa{-2}{-5}{\Q}$. Note that orders that are (right norm) Euclidean are maximal \cite[Proposition 2.8]{CeChLez}. Furthermore a quaternion algebra having a right norm Euclidean order has class number one \cite[Proposition 2.9]{CeChLez} and thus also type number one meaning that there is only one conjugacy class of maximal orders. In \cite{Fitz} also a specific representative of that unique conjugacy class, denoted $\O_2$, $\O_3$ and $\O_5$ respectively, is constructed. In the table below, we state specific $\mathbb{Z}$-bases $\{b_1, b_2, b_3, b_4\}$ of these orders (which also can be found in \cite[Proposition~12.3.2]{EricAngel1}). 
\newcolumntype{C}{>{\centering\arraybackslash}p{2.5cm}}
\newcolumntype{B}{>{\centering\arraybackslash}p{2cm}}
\begin{equation*}\label{eq:basis_of_quat_orders} 
\begin{tabular}{|*{2}{B|}*{3}{C|}}
& $b_1$ & $b_2$ & $b_3$ & $b_4$ \\ \hline
  $\O_2$ & $1$ & $i$ & $j$ & $\omega_2 = \frac{1 + i + j + k}{2}$ \\
  $\O_3$ & $1$ & $i$ & $\omega_3 = \frac{1+j}{2}$ & $\frac{i+k}{2}$  \\
  $\O_5$ & $1$ & $ \frac{1+i+j}{2}$ & $\omega_5 = \frac{2+i-k}{4}$ & $\frac{2+3i+k}{4}$ \\ \hline
\end{tabular} \end{equation*}

The quaternion algebra $\qa{-1}{-1}{\Q}$ also contains the order of \emph{Lipschitz quaternions} $\LL$ consisting of all integral linear combinations of the basis elements $1, i, j, k$. Although $\LL$ is not Euclidean, \Cref{OL_is_GE2} shows that it also is a $\GE_2$-ring, like $\O_2$, $\O_3$ and $\O_5$. The basis of the orders $\LL$, $\O_2$ and $\O_3$ consist of units. This allows to apply a similar reasoning as in the proof of \Cref{universalcomplete} and prove that these three orders are universal. 

 On the contrary, the order $\O_5$ does not have a basis of units and hence the proof of \Cref{universalcomplete} is not applicable. Even stronger, by using an elegant result from \cite{Cohn1} involving $\U$-homomorphisms, we show that the universal relations do not form a complete set of relations for $\GE_2(\O_5)$. A map $f: R \rightarrow S$ between two unital rings $R$ and $S$ is called a $\U$-homomorphism if it is a morphism between the additive groups of the rings mapping $1_R$ to $1_S$ and moreover if for any $a \in R$ and units $\alpha, \beta$ of $R$ the following holds:
 $$f(\alpha a \beta) = f(\alpha)f(a)f(\beta). $$
Let $R$ be a ring, such that the universal relations form a complete set of relations for $\GE_2$, $S$ any ring and let $f \colon R \to S$ be any $\U$-homomorphism. Then Cohn's result  \cite[Theorem (11.2)]{Cohn1} states that $f$ induces a homomorphism $f^* \colon \GE_2(R) \to \GE_2(S)$ by the rule $E(x) \mapsto E(f(x))$, $[\alpha, \beta] \mapsto [f(\alpha), f(\beta)]$. One may use this result ad absurdum to prove that the universal relations do not form a complete set of relations for $\GE_2$ of a certain ring.

Suppose that the universal relations give a complete set of relations for $\GE_2(\O_5)$.
It is straightforward to check that $f : \O_5 \rightarrow \O_2$, defined by the $\Z$-linear expansion of \begin{center}\begin{tabular}{rcllrcl}
 $1$ & $\mapsto$ & $1',$ & & $\frac{2+i-k}{4}$ &  $\mapsto$ & $\frac{1'+i'-j'+k'}{2},$\\
 $\frac{1+i+j}{2}$ & $ \mapsto$ &  $\frac{1'+i'+j'+k'}{2}$, & & $ \frac{2+3i+k}{4}$ &  $\mapsto$ & $\frac{1'-i'+j'+k'}{2}$ 
\end{tabular}\end{center}
is a $\U$-homomorphism where we denoted the standard quaternion basis of $\qa{-1}{-1}{\Q}$ by $1', i', j', k'$.
By Cohn's result it follows that the induced map $f^*: \GE_2(\O_5) \rightarrow \GE_2(\O_2)$ is a morphism. Since $\frac{1+i+j}{2}$ is an element of norm $2$ in $\O_5$, \Cref{clifford cohn} claims that $$\left( E\left(1-\frac{1+i+j}{2}\right)E\left(\frac{1+i+j}{2}\right)\right)^2 = E(0)^2 $$

Taking $f^*$ on both sides yields $$\left( E\left(1'-\frac{1'+i'+j'+k'}{2}\right)E\left(\frac{1'+i'+j'+k'}{2}\right)\right)^2 = E(0)^2, $$ which does clearly not hold in $\O_2$. 

To sum up, let $\LL$ denote the Lipschitz quaternions in $\qa{-1}{-1}{\Q}$ and let $\O_2$, $\O_3$ and $\O_5$ denote the maximal order in $\qa{-1}{-1}{\Q}$, $\qa{-1}{-3}{\Q}$ and $\qa{-2}{-5}{\Q}$ respectively, stated in \cite[(3.20)]{abelianizationpaper}. 
By the previous and the explanation after \Cref{universalcomplete}, we have the following proposition. 

\begin{proposition*}
The universal relations form a complete set of relations for $\GE_2(\LL)$, $\GE_2(\O_2)$ and $\GE_2(\OO_3)$. This does not hold for $\GE_2(\O_5)$.
\end{proposition*}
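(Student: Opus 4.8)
The two assertions call for different mechanisms, so I would prove them separately, and I expect the bulk of the work to lie in the positive part. For the orders $\LL$, $\O_2$ and $\O_3$ the decisive fact is that each of them has a $\Z$-basis consisting of units (the bases displayed in the table above; see also \cite[(3.20)]{abelianizationpaper}). Granting this, the plan is to rerun the proof of \Cref{universalcomplete}. By \Cref{clifford cohn}, the only relations that need to be derived from the universal ones are the instances of \eqref{relation alpha}, i.e.\ $(E(\overline a)E(a))^m = E(0)^2$ for elements $a$ of the order with $1 < |a| = \sqrt m < 2$; in these quaternion orders such $a$ have $m \in \{2,3\}$. For each such $a$ I would expand $a$ as a $\Z$-linear combination of the (unit) basis elements, use \eqref{R1'} to write $E(a)$ as a product of factors $E(\pm b_\ell)$ interspersed with powers of $E(0)$, and then collapse the resulting word to $E(0)^2$ using only \eqref{R1}--\eqref{R4}, the diagonal relation \eqref{R3'}, the defining relations of $\DE_2$, and the key identity \eqref{2_9'} — precisely as in the $\sqrt 2$ and $\sqrt 3$ reductions carried out for $\Gamma_n(\Z)$ in the proof of \Cref{universalcomplete}. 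Organising the computation by first reducing modulo $\DE_2$ and carrying along the diagonal factor keeps it tractable; this is the ``similar but more complicated'' argument alluded to right after \Cref{universalcomplete}, and it is where essentially all of the routine work sits.

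For $\GE_2(\O_5)$ the plan is a proof by contradiction using Cohn's $\U$-homomorphism criterion \cite[Theorem (11.2)]{Cohn1}. Assume the universal relations are complete for $\GE_2(\O_5)$. First I would exhibit the additive map $f\colon \O_5 \to \O_2$ defined on the basis $\{1,\ \tfrac{1+i+j}{2},\ \omega_5,\ \tfrac{2+3i+k}{4}\}$ by the assignments displayed above, and check that it is a $\U$-homomorphism: it is additive by construction and sends $1$ to $1'$, so it only remains to verify $f(\alpha a \beta) = f(\alpha)f(a)f(\beta)$ for units $\alpha, \beta$ of $\O_5$, which — as $\U(\O_5)$ is finite — amounts to a finite check on a generating set of $\U(\O_5)$ against the four basis elements. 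Cohn's theorem then produces a group homomorphism $f^*\colon \GE_2(\O_5) \to \GE_2(\O_2)$ sending $E(x) \mapsto E(f(x))$ and $[\alpha,\beta] \mapsto [f(\alpha),f(\beta)]$. Now $a := \tfrac{1+i+j}{2}$ has $|a|^2 = 2$ in $\O_5$ and $\overline a = 1 - a$, so \Cref{clifford cohn} furnishes the relation $(E(\overline a)E(a))^2 = E(0)^2$ in $\GE_2(\O_5)$; applying $f^*$ and using $f(a) = \omega_2 := \tfrac{1'+i'+j'+k'}{2}$ and $f(\overline a) = 1' - \omega_2 = \overline{\omega_2}$, one would obtain $(E(\overline{\omega_2})E(\omega_2))^2 = E(0)^2$ in $\GE_2(\O_2)$.

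The proof then concludes by contradicting this identity through a direct matrix computation inside $\SL_2(\O_2)$, which is legitimate because $\GE_2(\O_2) \subseteq \GL_2(\O_2) = \SL_2(\O_2)$ by \Cref{Dieudonneremark}: since $\omega_2 \in \U(\O_2)$ with $\omega_2\overline{\omega_2} = 1$, one gets $E(\overline{\omega_2})E(\omega_2) = \sm{0 & \overline{\omega_2} \\ -\omega_2 & -1}$ and hence $(E(\overline{\omega_2})E(\omega_2))^2 = \sm{-1 & -\overline{\omega_2} \\ \omega_2 & 0}$, which is not $-I = E(0)^2$ as its off-diagonal entries are nonzero. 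This contradiction shows $\GE_2(\O_5)$ is not universal for $\GE_2$. I expect the only genuine obstacle to be the positive part — carefully pushing the reduction of \Cref{universalcomplete} through the larger, rank-$4$ orders; once that, together with the finite verification that $f$ is a $\U$-homomorphism, is settled, the negative part follows immediately from Cohn's result and the one-line matrix identity above.
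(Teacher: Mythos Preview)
Your proposal is correct and follows essentially the same route as the paper: for $\LL$, $\O_2$, $\O_3$ the paper also invokes the existence of a $\Z$-basis of units and reruns the reduction of \Cref{universalcomplete}, and for $\O_5$ it uses exactly the same $\U$-homomorphism $f\colon \O_5 \to \O_2$ together with Cohn's criterion \cite[Theorem (11.2)]{Cohn1}, applying the induced $f^*$ to the norm-$2$ relation for $a = \tfrac{1+i+j}{2}$. Your explicit matrix computation showing $(E(\overline{\omega_2})E(\omega_2))^2 \neq -I$ in $\SL_2(\O_2)$ spells out what the paper dismisses as ``clearly does not hold'', but the argument is otherwise identical.
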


\bibliographystyle{plain}
\bibliography{FA}

\end{document}